\newtheorem{Thm}{Theorem}[section]
\newtheorem{Prp}[Thm]{Proposition}
\newtheorem{Cor}[Thm]{Corollary}
\newtheorem{Lma}[Thm]{Lemma} 
\theoremstyle{definition}
\newtheorem{Def}[Thm]{Definition}
\newtheorem{Rk}[Thm]{Remark}
\newtheorem{Ex}[Thm]{Example}
\numberwithin{equation}{section}
\newcommand{\Stab}{\operatorname{Stab}}
\newcommand{\sgn}{\operatorname{sgn}}
\newcommand{\Aut}{\operatorname{Aut}}
\newcommand{\Id}{\operatorname{Id}}
\newcommand{\Der}{\operatorname{Der}}
\newcommand{\ch}{\operatorname{char}}
\newcommand{\Z}{\mathbb Z}
\newcommand{\R}{\mathbb R}
\newcommand{\C}{\mathbb C}
\newcommand{\HH}{\mathbb H}
\newcommand{\OO}{\mathbb O}
\newcommand{\A}{\mathbb A}
\newcommand{\T}{\mathbb S^3}
\begin{document}
\title[Division Composition Algebras with Non-Abelian Derivation Algebras]{Classification of the Finite-Dimensional Real
Division Composition Algebras having a Non-Abelian Derivation Algebra}
\author[S. Alsaody]{Seidon Alsaody}
\address{Uppsala University\\Dept.\ of Mathematics\\P.O.\ Box 480\\751 06 Uppsala\\Sweden}
\email{seidon.alsaody@math.uu.se}

\begin{abstract} We classify the category of finite-dimensional real division composition algebras
having a non-abelian Lie algebra of derivations. Our complete and explicit classification is largely achieved by introducing
the concept of a quasi-description of a category, and using it to express the problem in terms of normal form problems for certain
group actions
on products of 3-spheres.
\end{abstract}

\subjclass[2010]{17A35, 17A36, 17A75}
\keywords{Composition algebras, division algebras, absolute valued algebras, derivation algebras, quasi-descriptions.}
\maketitle
\section{Introduction}
Division algebras and composition algebras constitute two important classes of not necessarily associative algebras
which are defined over any field. Over the real numbers, finite-dimensional algebras from either class exist exclusively in
dimensions 1, 2, 4 and 8. For division algebras, this was proven in the classical work of Hopf \cite{H}, Bott--Milnor \cite{BM},
and Kervaire \cite{Ke}. For composition algebras, this is true over any field, and was proven by Kaplansky
\cite{Ka} in 1953. Despite their long history, the problem of classifying these algebras is still unsolved, even
under the assumption of the ground field being $\R$ and the dimension being finite. 

In this study we consider finite-dimensional real algebras which are both division algebras and composition algebras. These
algebras are characterized by the property that they are absolute valued, i.e.\ equipped with a multiplicative norm. The fact
that these algebras only exist in dimensions 1, 2, 4 and 8 was proven by Albert \cite{A} already in 1947. While a classification
exists in dimension at most four, the case of dimension eight is far from fully understood, and has exhibited
difficulties.

One way to approach these classes of algebras is by considering those objects which exhibit a high degree of symmetry, in the
sense of having a large automorphism group. Examples are the algebras of the octonions, para-octonions, and the Okubo algebras,
which, as the references indicate, have many interesting connections to various algebraic objects, as well as applications in
mathematics and physics. For a real division algebra $A$, the automorphism group is a Lie group, its Lie
algebra being the derivation algebra $\Der(A)$, and $A$ carries the structure of a $\Der(A)$-module. This allows for tools
from representation theory to be used. The approach to finite-dimensional real division algebras via their derivation algebras was
taken by Benkart and Osborn in \cite{BO1} and \cite{BO2}, where they determined which Lie algebras may occur as such derivation
algebras, and how the division algebras decompose as modules over their derivation algebras. For
division algebras with large derivation algebras, they determined the multiplication tables as well. A similar study, using
automorphism groups instead of derivation algebras, was conducted in \cite{DZ}. Turning to finite-dimensional composition algebras
over more general fields, several investigations were carried out by Elduque, Elduque--Myung and Elduque--P\'erez, focusing mainly
on algebras with large derivation algebras (see e.g.\ \cite{EM},\cite{EP}). In the extensive paper \cite{P}, P\'erez gave an
explicit account of all finite-dimensional division composition algebras with non-abelian derivation algebras over any field $k$
of characteristic different from 2 and 3. Given a non-abelian Lie algebra $L$ occurring as such a derivation algebra, he expressed
all finite-dimensional division composition $k$-algebras $A$ with $\Der(A)=L$ as isotopes of unital composition algebras, and
determined their decomposition into irreducible $L$-modules.

With the results from \cite{P} as a starting point, we work toward classifying the finite-dimensional real division composition
algebras having a non-abelian derivation algebra. The paper is organized
as follows. After the necessary preliminaries, we recall the classification in the case of dimension at most 4. In Section 2 we
review some general results on eight-dimensional real division composition algebras in preparation for our further study. From
\cite{P} we
know that if $A$ is an eight-dimensional division composition algebra over a field of characteristic not 2 or 3, having a
non-abelian derivation algebra, and if
$A=A_1\oplus\cdots\oplus A_n$ is a decomposition of $A$ into irreducible $\Der(A)$-modules, then the set $\{\dim A_i|1\leq i\leq
n\}$ is invariant under isomorphism, and attains one of eight possible values. Accordingly, the category $\mathcal D$ of
eight-dimensional real division composition algebras having a non-abelian derivation algebra decomposes into eight blocks. In
Section 3, we give a description (in the sense of Dieterich) of five of these, thus obtaining, in each case, an equivalence of
categories from a group action groupoid to the block at hand. 

The descriptions of Section 3 give precise information about the structure of the blocks, and extracting a classification out of
them would be technical. Aiming at a more transparent approach, we introduce in Section 4 the notion of a
quasi-description of a category. Like descriptions, a quasi-description of a category transfers the problem of classifying it to
the normal form problem of a group action. Applying this to $\mathcal D$, we obtain two quasi-descriptions, together covering six
of the eight blocks. In both
quasi-descriptions, the group actions are induced by the action of the group $\Aut(\HH)\simeq SO_3$ on the set $\T=\mathbb S(\HH)$
of quaternions of norm one. This gives a unified approach which also renders the
classification problem quite feasible. In Section 5 we
treat the remaining two blocks, achieving an explicit classification of each. Our final Section 6 is devoted to the solution of
the normal form problem for the actions involved in the aforementioned quasi-descriptions, thereby completing the classification
of $\mathcal D$.

\subsection*{Acknowledgements} I am grateful to Professor Alberto Elduque for proposing this direction of study, and for
valuable discussions, especially on the topic of Section 5.2, where his insight and his contribution of Proposition \ref{P:
AE} proved essential.

\subsection{Preliminaries}
By an \emph{algebra} over a field $k$ we understand a $k$-vector space $A$ with a bilinear multiplication, written as
juxtaposition, neither assumed to be associative or commutative, nor to admit a unity. A non-zero $k$-algebra $A$ is called a
\emph{division algebra} if for each $a\in A\setminus\{0\}$, the operators $L_a=L_a^A$ and $R_a=R_a^A$ of left and right
multiplication by
$a$, respectively, are bijective. If $A$ is a division algebra, then it has no zero divisors, and the converse is true if
the dimension of $A$ is finite. A $k$-algebra $A$ is called a \emph{composition algebra} if $A$ is equipped with a
non-degenerate\footnote{A quadratic form $q$ on a $k$-vector space $V$, where $\ch(k)\neq 2$, is called
\emph{non-degenerate} if no $x\in V\setminus \{0\}$ is orthogonal to the entire space $V$ with respect to the bilinear form
$b_q(x,y)=q(x+y)-q(x)-q(y)$. In characteristic two the definition is more involved; see e.g.\ \cite{KMRT}.}
quadratic form $q$ that is multiplicative, i.e.\ such that
\[q(xy)=q(x)q(y)\]
for all $x,y\in A$. Finally, a non-zero real algebra $A$ is called
an \emph{absolute valued algebra} if it is endowed with a
multiplicative norm.

In the setting of finite-dimensional real algebras, the norm of an absolute valued algebra and the quadratic form of a
composition algebra are each uniquely determined by the multiplication of the algebra. Thus we may speak of \emph{the} norm of
an absolute valued algebra and \emph{the} quadratic form of a composition algebra. If a composition algebra $A$ is moreover
unital, it is equipped with an involution $J: A\to A$ fixing the unity and acting as $-1$ on its orthogonal
complement. We will refer to this as the \emph{standard involution on $A$}.

The following result expands on some of the
remarks in the introduction.

\begin{Prp}\label{P: Bridge} Let $A$ be a finite-dimensional, non-zero real algebra. Then the following statements are equivalent.
\begin{enumerate}[(i)]
\item $A$ is absolute valued.
\item $A$ is a division composition algebra.
\item $A$ is a composition algebra and the quadratic form $q$ of $A$ is positive definite.
\item $A$ is a composition algebra and the quadratic form $q$ of $A$ is definite.
\end{enumerate}
\end{Prp}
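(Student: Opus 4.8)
The plan is to prove the four statements equivalent by closing a short cycle, for instance $(i)\Rightarrow(ii)\Rightarrow(iii)\Rightarrow(iv)\Rightarrow(i)$, or --- what amounts to the same --- by checking $(ii)\Leftrightarrow(iii)\Leftrightarrow(iv)$ and $(i)\Leftrightarrow(iii)$. Every implication except $(i)\Rightarrow(ii)$ is a formal consequence of the multiplicativity identity $q(xy)=q(x)q(y)$ (or of its analogue for the norm) together with Sylvester's law of inertia and the finiteness of $\dim_{\R}A$; the one genuinely substantial ingredient, which I expect to be the main obstacle, is that the norm of a finite-dimensional real absolute valued algebra is Euclidean. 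I would quote this from Albert's structure theory of such algebras \cite{A} (equivalently, after reducing via a norm-preserving isotope to the unital case, from the Urbanik--Wright theorem); there appears to be no wholly elementary route to it.

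I would first dispatch the soft implications. $(iii)\Rightarrow(iv)$ is trivial. For $(iii)\Rightarrow(i)$, put $N:=\sqrt{q}$, which is a norm because $q$ is positive definite, and note that $N(xy)^2=q(xy)=q(x)q(y)=N(x)^2N(y)^2$. Next, if $A$ is a composition algebra with definite form $q$, then $q$ is anisotropic, so $xy=0$ with $x,y\neq 0$ would give $q(x)q(y)=q(0)=0$ and hence $q(x)=0$ or $q(y)=0$, impossible; thus $A$ has no zero divisors and, being finite-dimensional, is a division algebra. This is $(iv)\Rightarrow(ii)$. Conversely, if $A$ is a division composition algebra and $q(x)=0$ for some $x\neq 0$, then $q(xy)=q(x)q(y)=0$ for all $y$; since $L_x$ is surjective this forces $q\equiv 0$ on $A$, contradicting non-degeneracy, so $q$ is anisotropic, hence definite by Sylvester. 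It cannot be negative definite: choosing $a\neq 0$ (possible as $A\neq 0$) and any $x\neq 0$, one has $ax\neq 0$ yet $q(ax)=q(a)q(x)>0$, contradicting $q(ax)<0$. Hence $q$ is positive definite, giving $(ii)\Rightarrow(iii)$. Together these yield $(ii)\Leftrightarrow(iii)\Leftrightarrow(iv)$.

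Finally I would connect $(i)$, proving $(i)\Rightarrow(ii)$, which in fact produces $(iii)$ as well. Let $A$ be absolute valued with multiplicative norm $N$; as in the previous paragraph it has no zero divisors, hence is a division algebra. At this point the non-formal step enters: by Albert's classification, $A$ is, as a normed space, one of $\R,\C,\HH,\OO$ with its standard Euclidean norm, so $q:=N^2$ is a positive definite --- in particular non-degenerate --- quadratic form, and it is multiplicative since $q(xy)=N(xy)^2=N(x)^2N(y)^2=q(x)q(y)$. Thus $A$ is a composition algebra whose form is positive definite; combined with $(iii)\Rightarrow(i)$ from the previous paragraph and the equivalences $(ii)\Leftrightarrow(iii)\Leftrightarrow(iv)$, this establishes all four. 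So the whole argument reduces to one-line verifications from the multiplicativity identity and the law of inertia, save for the single appeal to Albert's theorem for the Euclidean nature of the norm, which is where the real work --- and the dimension hypothesis --- is used.
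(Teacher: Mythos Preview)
Your proof is correct and follows the same route the paper indicates: the implications among (ii), (iii), (iv) are formal consequences of multiplicativity and Sylvester's law (the paper explicitly sketches only $(iii)\Leftrightarrow(iv)$ via $q(a^2)=q(a)^2>0$), while the substantial step linking (i) to the others is reduced to Albert's structure theorem, exactly as the paper defers to \cite{DDiss} and \cite{A}. The paper does not give a self-contained proof, so your argument is in fact more explicit than what appears there.
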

We call a quadratic form \emph{definite} if it is positive definite or negative definite.

The above result is not hard to prove. For instance, (iii) is equivalent to (iv) since if
$q(a)<0$ for some $a\in A$, then $q(a^2)=q(a)^2>0$, whence $q$ cannot be negative definite. The equivalence of the first three
items is discussed in \cite{DDiss}. There, it is derived from the following important result from \cite{A}.

\begin{Prp}\label{P: Albert} Every unital finite-dimensional absolute valued algebra is isomorphic to some $\mathbb A \in
\{\mathbb R,\mathbb C,\mathbb H,\mathbb O\}$. Any finite-dimensional absolute valued algebra is isomorphic to an
orthogonal isotope $\A_{f,g}$ of
some $\mathbb A \in \{\mathbb R,\mathbb C,\mathbb H,\mathbb O\}$, i.e.\ $A=\mathbb A$ as a vector space, and the
multiplication $\cdot$ in $A$ is given by
\[x \cdot y = f(x)g(y)\] for all $x,y \in A$, where $f$ and $g$ are linear orthogonal operators on $A$, and juxtaposition denotes
multiplication in $\mathbb A$. Moreover, the norm in $\A_{f,g}$ coincides with the norm in $\A$.
\end{Prp}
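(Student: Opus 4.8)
This is A.\,A.\,Albert's theorem, so in practice one simply cites \cite{A}; but a self-contained argument can be organized as follows. The plan is to settle the unital case first and then deduce the general statement by an isotopy. Observe at the outset that every finite-dimensional absolute valued algebra $A$ is a division algebra, since multiplicativity of the norm makes the injective operators $L_a,R_a$ (for $a\neq0$) bijective in finite dimension; moreover $L_a$ and $R_a$ are \emph{linear isometries} whenever $\|a\|=1$, a fact we use repeatedly.

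\textbf{The unital case.} Let $A$ be unital with unit $e$, so that $\|e\|=\|e^2\|=\|e\|^2$ gives $\|e\|=1$. The crux --- essentially the Urbanik--Wright theorem, and the heart of Albert's result --- is that the norm of $A$ satisfies the parallelogram law, equivalently that $q(x):=\|x\|^2$ is a positive definite quadratic form. Granting this, $q$ is multiplicative (since $\|xy\|=\|x\|\,\|y\|$) and non-degenerate, so $(A,q)$ is a unital composition algebra; Hurwitz's classification of finite-dimensional unital composition algebras over a field, combined with positive definiteness --- which excludes the split algebras $\R\times\R$, $M_2(\R)$ and the split octonions, whose norms are isotropic --- then forces $A\cong\A$ for some $\A\in\{\R,\C,\HH,\OO\}$. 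This isomorphism is automatically an isometry, because the multiplicative norm of a finite-dimensional absolute valued algebra is uniquely determined by the multiplication. To prove the parallelogram law one reduces to unit vectors $x,y$: writing $y=xc$ with $\|c\|=1$ and using that $L_x$ is an isometry, one gets $\|x\pm y\|=\|x(e\pm c)\|=\|e\pm c\|$, so it suffices to show $\|e+c\|^2+\|e-c\|^2=4$ for every $c$ with $\|c\|=1$. This identity is the genuinely delicate point; a natural ingredient is the relation $(e+c)(e-c)=e-c^2$ (valid because $e$ is a unit), together with a careful analysis of the subalgebra involved.

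\textbf{The general case.} Now let $A$ be arbitrary and fix $u\in A$ with $\|u\|=1$. Since $L_u$ and $R_u$ are bijective isometries, the bilinear product $x*y:=R_u^{-1}(x)\,L_u^{-1}(y)$ (juxtaposition denoting the original multiplication) satisfies $\|x*y\|=\|x\|\,\|y\|$, so $(A,*)$ is again an absolute valued algebra with the \emph{same} norm, and one checks directly that $u^2$ is a two-sided unit for $*$. By the unital case there is an isometric isomorphism $\varphi\colon(A,*)\to\A$ for some $\A\in\{\R,\C,\HH,\OO\}$; in particular the norm of $A$ is the norm of $\A$ transported along $\varphi$, whence $\varphi$, $L_u$ and $R_u$ become orthogonal operators. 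Unwinding the definition of $*$ gives $xy=(R_ux)*(L_uy)$ for all $x,y$, so with the orthogonal operators $f:=\varphi R_u\varphi^{-1}$ and $g:=\varphi L_u\varphi^{-1}$ on $\A$ we obtain $\varphi(xy)=\varphi\!\big((R_ux)*(L_uy)\big)=f(\varphi x)\,g(\varphi y)$. Identifying $A$ with $\A$ via $\varphi$, the algebra $A$ is therefore $\A_{f,g}$, and since $\|f(x)g(y)\|=\|x\|\,\|y\|$ its norm coincides with that of $\A$.

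\textbf{Main obstacle.} Everything except the parallelogram law in the unital case is either formal (the isotopy bookkeeping) or standard input (Hurwitz's theorem and the Cayley--Dickson description of its output over $\R$). The real obstacle is the structural fact that the norm of a unital absolute valued algebra is Euclidean; in a careful proof one must moreover ensure that the reduction above does not tacitly assume power-associativity, which is best avoided by verifying directly that any two-dimensional unital absolute valued real algebra is isomorphic to $\C$.
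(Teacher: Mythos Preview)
The paper does not give a proof of this proposition at all: it is quoted as Albert's result from \cite{A} and used as input, so there is no ``paper's own proof'' to compare your attempt against. What you have written is the standard outline --- reduce to the unital case by the isotopy $x*y=R_u^{-1}(x)\,L_u^{-1}(y)$ with unit $u^2$, then invoke Hurwitz --- and the bookkeeping in your general case is correct (in particular your verification that $u^2$ is a two-sided unit and the unwinding $xy=(R_ux)*(L_uy)$ are right).

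That said, by your own admission the argument is incomplete: you state the parallelogram identity $\|e+c\|^2+\|e-c\|^2=4$ for unit $c$ as ``the genuinely delicate point'' and do not prove it. This is not a minor omission --- it is precisely the content of Albert's theorem (equivalently, of the Urbanik--Wright theorem you mention), and the relation $(e+c)(e-c)=e-c^2$ you cite, while true in any unital algebra, does not by itself yield the identity without further structural input. So as a sketch with a citation your write-up is fine, but as a self-contained proof it has a genuine gap at exactly the place where all the work lies; if you want it to stand on its own you must supply a proof that the norm is Euclidean, for instance by first showing that every element satisfies a quadratic relation $x^2-t(x)x+n(x)e=0$ with $n(x)=\|x\|^2$, and deducing the inner-product structure from that.
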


We denote by $\mathcal A$ the category of all finite-dimensional absolute valued algebras (hence, of all finite-dimensional real
division composition algebras), where the morphisms are the non-zero algebra homomorphisms. It follows from the above result that
the object class of $\mathcal A$ is partitioned as
\[\mathcal A=\mathcal A_1\cup\mathcal A_2\cup\mathcal A_4\cup\mathcal A_8,\]
where $\mathcal A_d$ is the full subcategory of $\mathcal A$ consisting of all $d$-dimensional objects. Since homomorphisms of
finite-dimensional division algebras are injective, it follows that $\mathcal A_d$ is a (not necessarily small) groupoid for
each $d\in\{1,2,4,8\}$. For $d>1$, we have the following decomposition of $\mathcal A_d$, due to \cite{DD}.

\begin{Prp}\label{P: Double Sign} Let $A\in \mathcal A_d$ for some $d \in \{2,4,8\}$. For any $a,b \in A\setminus\{0\}$,
\[\begin{array}{lll}\sgn(\det L_a)=\sgn(\det L_b)&\text{and}&\sgn(\det R_a)=\sgn(\det R_b).\end{array}\]
The \emph{double sign} of $A$ is the pair $(\sgn(\det L_a),\sgn(\det R_a))$, $a \in A\setminus\{0\}$. Moreover, for each $d \in
\{2,4,8\}$,
\begin{equation}\label{Coproduct}
\mathcal{A}_d = \coprod_{(i,j)\in \Z_2^2} \mathcal A_d^{ij}
\end{equation}
where $\mathcal A_d^{ij}$ is the full subcategory of $\mathcal A_d$ formed by all algebras having double sign $((-1)^i,(-1)^j)$.
\end{Prp}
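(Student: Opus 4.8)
The plan is to establish the first assertion by a connectedness argument and then to derive the categorical decomposition from the observation that the double sign is invariant under the morphisms of $\mathcal A_d$.

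First I would note that, since $d\geq 2$, the set $A\setminus\{0\}$ is path-connected: as a real vector space $A\cong\R^d$, and $\R^d\setminus\{0\}$ is path-connected for $d\geq 2$. The map $a\mapsto L_a$ is linear, hence continuous, from $A$ to $\operatorname{End}_\R(A)$, and $\det\colon\operatorname{End}_\R(A)\to\R$ is polynomial, so $a\mapsto\det L_a$ is a continuous function on $A\setminus\{0\}$. Because $A$ is a division algebra, $L_a$ is bijective for every $a\neq 0$, so this function is nowhere zero on the connected set $A\setminus\{0\}$; by the intermediate value theorem it therefore has constant sign. Replacing $L_a$ by $R_a$ throughout gives the same conclusion for right multiplications. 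Hence $(\sgn(\det L_a),\sgn(\det R_a))$ is independent of the choice of $a\in A\setminus\{0\}$, and the double sign is well defined.

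For the decomposition \eqref{Coproduct}, I would show that the double sign is an isomorphism invariant. Let $\varphi\colon A\to B$ be a morphism in $\mathcal A_d$, i.e.\ a non-zero algebra homomorphism; as recalled above, such a map is injective and hence, by equality of dimensions, bijective. From $\varphi(ax)=\varphi(a)\varphi(x)$ one gets $\varphi\circ L_a^A=L_{\varphi(a)}^B\circ\varphi$, so that $L_{\varphi(a)}^B=\varphi\circ L_a^A\circ\varphi^{-1}$ for $a\neq 0$, and likewise $R_{\varphi(a)}^B=\varphi\circ R_a^A\circ\varphi^{-1}$. Conjugate endomorphisms have the same determinant, so $\sgn(\det L_{\varphi(a)}^B)=\sgn(\det L_a^A)$ and $\sgn(\det R_{\varphi(a)}^B)=\sgn(\det R_a^A)$; thus $A$ and $B$ have the same double sign. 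Consequently each object of $\mathcal A_d$ lies in exactly one of the full subcategories $\mathcal A_d^{ij}$, $(i,j)\in\Z_2^2$, and there is no morphism between objects of distinct $\mathcal A_d^{ij}$; since these subcategories are full, this is precisely the statement that $\mathcal A_d$ is their coproduct.

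I do not expect a real obstacle in this proof; the only point needing a little care is the transition from "the double sign is an isomorphism invariant" to the categorical coproduct, which uses both the fullness of each $\mathcal A_d^{ij}$ and the fact that every morphism of $\mathcal A_d$ is of the conjugation type analysed above (guaranteed since $\mathcal A_d$ is a groupoid). It is also worth recording where $d\geq 2$ is used: solely in the path-connectedness of $A\setminus\{0\}$, which indeed fails for $d=1$, where $a\mapsto\det L_a$ genuinely changes sign.
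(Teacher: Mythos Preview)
Your argument is correct. Note, however, that the paper does not actually prove this proposition: it is stated as a result ``due to \cite{DD}'' and is quoted without proof. So there is no ``paper's own proof'' to compare against. What you have written is the standard connectedness argument for the well-definedness of the double sign, together with the straightforward conjugation computation showing it is an isomorphism invariant; this is essentially how the result is established in the cited reference as well.
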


\begin{Rk}\label{R: Double Sign} If $A$ is unital, then $A$ has double sign $(+,+)$, and each orthogonal isotope
$A_{f,g}$ thus has double sign $(\det g,\det f)$.
\end{Rk}

For each $d\in\{2,4,8\}$ and each subcategory $\mathcal B$ of $\mathcal A_d$, we use the notation
\[\mathcal B^{ij}=\mathcal B \cap \mathcal A_d^{ij}.\]

We stress that the superscript belongs to $\Z_2^2$. While it is more customary to use pairs of the signs $+$
and $-$, hence writing $\mathcal B^{\alpha\beta}$ where $\alpha=(-1)^i$ and $\beta=(-1)^j$, this will be notationally inconvenient
for our purposes. Throughout this article, we will abuse notation by writing and treating the elements
of $\Z_2$ as the natural numbers $0$ and $1$ with operations modulo 2.

In the study we are undertaking, we will use and generalize the concept of a \emph{description} of a groupoid, due to Dieterich
\cite{D}. In this sense,
a  \emph{description} of a groupoid $\mathcal C$ is a quadruple $(G,M,\alpha,\mathcal F)$ where $G$ is a group, $M$ is a set,
$\alpha: G\times M\to M$ 
a group action, and $\mathcal F: \phantom{}_GM\to\mathcal C$ an equivalence of categories. The groupoid $_GM$ is defined by
having $M$ as object set, and for each $x,y\in M$, the morphism set
\[_GM(x,y)=\{(g,x,y)|g\cdot x=y\}.\]
We denote the morphism $(g,x,y)$ by $g$ if the domain and codomain are clear from context. By constructing a description of a
groupoid, one transfers the isomorphism problem for this groupoid to the normal form problem for the group action involved, and
classifying the groupoid then amounts to finding a transversal for the orbits of the group action. 

The groups appearing in the descriptions below are sometimes constructed as semi-direct products. We include the definition in
order to fix notation.

\begin{Def} Let $G$ and $H$ be groups and $\beta: H\times G\to G$ a group action such that for each $h\in H$, the map $\beta_h:
g\mapsto h\cdot g$
is an automorphism of $G$. The \emph{semi-direct product of $G$ with $H$ with respect to $\beta$} is the group $G\rtimes
H=G\rtimes_\beta H$
with underlying set $G\times H$,
and multiplication
\[(g,h)(g',h')=(g\beta_h(g'), hh').\]
\end{Def}

\subsection{Derivations}
A \emph{derivation} on an algebra $A$ over a field $k$ is a linear map $\delta: A\to A$ satisfying the Leibniz rule
\[\delta(xy)=\delta(x)y+x\delta(y)\]
for any $x,y\in A$. If $\delta$ and $\delta'$ are two derivations of $A$, then so is the combination
$r\delta+s\delta'$ for each $r,s\in k$ and the commutator $\delta\delta'-\delta'\delta$. The set
of all derivations of $A$ is a Lie algebra over $k$ under the commutator, denoted $\Der(A)$. The algebra $A$ becomes a
$\Der(A)$-module under the action $\delta\cdot a=\delta(a)$.

For each $d\in\{1,2,4,8\}$, we will write $\mathcal D_d$ for the full subcategory of $\mathcal A_d$ whose object class consists
of those algebras having a non-abelian derivation algebra. In dimension at most four, the following
result captures the situation completely. 

\begin{Prp} 
\begin{enumerate}[(i)]
\item $\mathcal D_1=\mathcal D_2=\emptyset$ and $\mathcal D_4$ is classified by $\left\{\HH_{K^j,K^i}|(i,j)\in \Z_2^2\right\}$,
where $K$ is the standard involution $x\mapsto \overline{x}$ in $\HH$.
\item For each $(i,j)\in\Z_2^2$, $L=\Der\left(\HH_{K^j,K^i}\right)$ is of type $\mathfrak{su}_2$, and
$\HH_{K^j,K^i}=\R1\oplus 1^\bot$ as an
$L$-module, both direct summands being irreducible.
\item The automorphism group of $\HH_{K^j,K^i}$ is
\[SO_3=SO\left(1^\bot\right)=\left\{\kappa_q|q\in\T=\mathbb S(\HH)\right\}.\]
\item For each $(i,j)\in\Z_2^2$, a description of $\mathcal D_4^{ij}$ is given by $\left(SO_3, \{*\}, \alpha, \mathcal
E^{ij}\right)$, where
$\alpha$ is the trivial action of
$SO_3$ on the singleton set $\{*\}$, and 
\[\mathcal E^{ij}: \phantom{}_{SO_3}\{*\}\to\mathcal D_4^{ij}\]
acts on objects by $\{*\}\mapsto \HH_{K^j,K^i}$, and on morphisms as identity.
\end{enumerate}
\end{Prp}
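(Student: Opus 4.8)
The plan is to establish (i) in two halves --- that the four listed algebras lie in $\mathcal D_4$ and are pairwise non-isomorphic, and that there are no further objects --- to read (ii) and (iii) off from the subalgebra structure of $\mathfrak{so}_4$ and the normaliser of $SO_3$ in $O_4$, and to obtain (iv) formally from the other three.

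I would begin with two remarks valid for every finite-dimensional absolute valued algebra $A$ with norm $n$. First, every automorphism $\phi$ of $A$ lies in $O(n)$, since $n\circ\phi$ is again a multiplicative norm and hence equals $n$ by the uniqueness recalled before Proposition~\ref{P: Albert}; as each derivation $\delta$ spans the one-parameter automorphism group $t\mapsto\exp(t\delta)$, it follows that $\Der(A)\subseteq\mathfrak{so}(n)$. Now $\mathfrak{so}(n)$ is zero for $\dim A=1$ and one-dimensional for $\dim A=2$, both cases being abelian, so $\mathcal D_1=\mathcal D_2=\emptyset$. Second, $K$ commutes with every $\kappa_q$ (a one-line computation using $\overline q=q^{-1}$ for $q\in\T$), so each $\kappa_q$ is an automorphism of every $\HH_{K^j,K^i}$; hence $SO_3=\{\kappa_q\mid q\in\T\}\subseteq\Aut(\HH_{K^j,K^i})$, so $\Der(\HH_{K^j,K^i})$ is non-abelian and $\HH_{K^j,K^i}\in\mathcal D_4$. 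Since $K$ fixes $1$ and acts as $-\operatorname{Id}$ on $1^\bot$ we have $\det K=-1$, so by Remark~\ref{R: Double Sign} the double sign of $\HH_{K^j,K^i}$ is $((-1)^i,(-1)^j)$; the four algebras therefore have pairwise distinct double signs, are pairwise non-isomorphic, and $\HH_{K^j,K^i}\in\mathcal D_4^{ij}$.

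For (ii) and (iii), note that $\Der(A)\subseteq\mathfrak{so}_4\cong\mathfrak{su}_2\oplus\mathfrak{su}_2$ for $A\in\mathcal D_4$, and that $\Der(\HH_{K^j,K^i})$ contains $\Der(\HH)=\operatorname{Lie}(SO_3)$, a copy of $\mathfrak{su}_2$ acting on $\HH=\R1\oplus1^\bot$ trivially on the first summand and by the adjoint action on the second, both summands being irreducible. I would check that $\Der(\HH)$ is a \emph{maximal} subalgebra of $\mathfrak{so}_4$: a complement to it is, as a $\Der(\HH)$-module, the irreducible $3$-dimensional adjoint module, so any subalgebra strictly containing it is all of $\mathfrak{so}_4$. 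But $\Der(\HH_{K^j,K^i})=\mathfrak{so}_4$ is impossible, for it would give $\Aut(\HH_{K^j,K^i})\supseteq SO_4\ni-\operatorname{Id}$, and $-\operatorname{Id}$ cannot be an automorphism of a division algebra (it would give $xy=0$ identically). Hence $\Der(\HH_{K^j,K^i})=\Der(\HH)\cong\mathfrak{su}_2$ with the stated module decomposition, which is (ii). For (iii), any $\psi\in\Aut(\HH_{K^j,K^i})$ conjugates $\Der(\HH_{K^j,K^i})$ to itself and hence normalises $SO_3$ in $O_4$; since the centraliser of $SO_3$ in $O_4$ is $\{\operatorname{Id},-\operatorname{Id},K,-K\}$ (the four operators that are $\pm1$ on $\R1$ and $\pm1$ on $1^\bot$) and $SO_3$ has no outer automorphisms, $N_{O_4}(SO_3)=SO_3\cdot\{\operatorname{Id},-\operatorname{Id},K,-K\}$. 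Substituting $-\operatorname{Id}$, $K$, $-K$ into the identity $\psi(K^j(x)K^i(y))=K^j(\psi x)K^i(\psi y)$ shows that none of those three cosets of $SO_3$ consists of automorphisms, so $\Aut(\HH_{K^j,K^i})=SO_3=SO(1^\bot)$, which is (iii). Then (iv) is formal: by (i), $\mathcal D_4^{ij}$ has a single isomorphism class, represented by $\HH_{K^j,K^i}$, whose automorphism group is $SO_3$ by (iii), while the groupoid ${}_{SO_3}\{*\}$ of the trivial action likewise has one object with automorphism group $SO_3$; so $\mathcal E^{ij}$, which sends $*$ to $\HH_{K^j,K^i}$ and the morphism $q$ to $\kappa_q$ (respecting composition, since $\kappa_q\kappa_{q'}=\kappa_{qq'}$), is essentially surjective and bijective on the morphism set of $*$, hence an equivalence.

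What remains --- the converse half of (i), that every $A\in\mathcal D_4$ is isomorphic to one of the four algebras --- is the step I expect to be the main obstacle. The argument above adapts to show that for an arbitrary $A\in\mathcal D_4$ one has $\Der(A)\cong\mathfrak{su}_2$: being a subalgebra of the compact $\mathfrak{so}_4$, $\Der(A)$ is reductive and non-abelian, so it contains a simple ideal $\cong\mathfrak{su}_2$; this ideal cannot be a factor of $\mathfrak{so}_4\cong\mathfrak{su}_2\oplus\mathfrak{su}_2$, as that would integrate to $\{L_p\mid p\in\T\}$ or $\{R_p\mid p\in\T\}$ and put $-\operatorname{Id}=L_{-1}=R_{-1}$ in $\Aut(A)$, so it is a diagonally embedded copy, whose centraliser in $\mathfrak{so}_4$ is trivial, forcing $\Der(A)$ to equal it. The corresponding subgroup $S\subseteq\Aut(A)$ is then a copy of $SO_3$ acting on $A$ as $\R\oplus\R^3$; writing $A=\R e\oplus W$ for the isotypic decomposition, $S$-equivariance of the multiplication forces $e\cdot e\in\R e$, forces $L_e$ and $R_e$ to act as non-zero scalars on $W$, and leaves the multiplication $W\times W\to A$ governed by only two real parameters (one for the $\R e$-valued symmetric part, one for the $W$-valued antisymmetric part). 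Showing that the division property together with the multiplicativity of $n$ pins those parameters down to exactly the values occurring for the $\HH_{K^j,K^i}$ is a small instance of the isotopy normalisation performed over arbitrary fields of characteristic $\neq2,3$ in \cite{P}, from which this half of (i) may equally be quoted; everything else above is a short structural argument or bookkeeping with double signs.
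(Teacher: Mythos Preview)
The paper does not actually prove this proposition: immediately after stating it, the text says that items (i) and (ii) are due to \cite{EP}, that (iii) is well-known, and that (iv) is a direct consequence of these. So there is no argument in the paper to compare against, only citations.

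Your proposal, by contrast, supplies a largely self-contained proof, and the main lines are correct. The route you take for (ii) and (iii)---maximality of the diagonal $\mathfrak{su}_2$ inside $\mathfrak{so}_4$, together with the exclusion of $-\operatorname{Id}$ from the automorphism group of any division algebra, followed by the normaliser computation $N_{O_4}(SO_3)=SO_3\cdot\{\operatorname{Id},-\operatorname{Id},K,-K\}$---is clean and does the job. One small notational slip in (iv): the morphisms of ${}_{SO_3}\{*\}$ are already the elements $\kappa_q\in SO_3$, not the elements $q\in\T$, so ``identity on morphisms'' literally means $\kappa_q\mapsto\kappa_q$ and functoriality is automatic; your check $\kappa_q\kappa_{q'}=\kappa_{qq'}$ is not needed.

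You rightly flag the converse half of (i)---that every $A\in\mathcal D_4$ is isomorphic to some $\HH_{K^j,K^i}$---as the only substantive step. Your representation-theoretic reduction (showing $\Der(A)\cong\mathfrak{su}_2$ acts with isotypic decomposition $\R e\oplus W$, and that equivariance leaves only a few real parameters) is sound as far as it goes, though note there are a couple more parameters than you list: besides the two on $W\times W\to A$, one also has the scalars by which $L_e$ and $R_e$ act on $W$ and the scalar in $e\cdot e$. Pinning these down via the norm condition is exactly the content handled in \cite{EP} and \cite{P}, so deferring to those references here, as the paper itself does, is entirely appropriate.
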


The first two items are due to \cite{EP}, the third is well-known, and the
last is a direct consequence of these. As above, we will henceforth identify $SO_3$ with $\left\{\kappa_q|q\in\mathbb
S(\HH)\right\}$, where $\kappa_q:\HH\to\HH$ is defined by $x\mapsto qx\overline{q}$.

\begin{Rk} In the above proposition we saw that each $\HH_{K^j,K^i}$ decomposes into irreducible submodules, in other words, is
\emph{completely reducible} as a module over its derivation algebra. This is true for any finite-dimensional division composition
algebra over any
field of characteristic not 2 or 3, as was observed in \cite{P}.
\end{Rk}

For the remainder of the paper we will be preoccupied with the full subcategory $\mathcal D=\mathcal D_8\subset\mathcal A_8$. We
define, for each partition $\Pi$ of the natural number 8, the full subcategory $\mathcal D_\Pi$ of $\mathcal D$, in which the
objects are all $A\in \mathcal D$ such that $\Pi$ the partition of 8 given by the dimensions of the irreducible components of $A$
as a $\Der(A)$-module. 

\begin{Ex} For the algebra $\OO$ we know that $\Der(\OO)$ is a Lie
algebra of type $\mathfrak {g}_2$, and that as a $\mathfrak{g}_2$-module, $\OO$ is the direct sum of a (trivial) one-dimensional
module, and an irreducible seven-dimensional module. Thus $\OO\in\mathcal D_{\{1,7\}}$. In fact, $\{\OO\}$ exhausts $\mathcal
D_{\{1,7\}}^{00}$ up to isomorphism.
\end{Ex}

In the sequel we shall omit the brackets in the subscript $\Pi$, except in the case where $\Pi=\{8\}$, to avoid confusion with
$\mathcal D_8=\mathcal D$

From \cite{P} we have the following decomposition of $\mathcal D$.

\begin{Prp}\label{P: Decomp} The category $\mathcal D$ decomposes as the coproduct
\[\mathcal D_{1,7}\amalg\mathcal D_{\{8\}}\amalg\mathcal D_{1,1,6}\amalg\mathcal D_{1,3,4}\amalg\mathcal D_{1,1,2,4}\amalg
\mathcal D_{1,1,1,1,4}\amalg\mathcal D_{3,5}\amalg\mathcal D_{1,1,3,3}.\]
For each $A\in\mathcal D$, $\Der(A)$ is of type $\mathfrak {g}_2$ if $A\in\mathcal D_{1,7}$, $\mathfrak {su}_3$ if $\A\in\mathcal
D_{\{8\}}\amalg \mathcal D_{1,1,6}$, $\mathfrak {su}_2\times\mathfrak {su}_2$ or $\mathfrak {su}_2\times\mathfrak {a}$ if $A\in
\mathcal D_{1,3,4}$, and $\mathfrak {su}_2\times\mathfrak {a}$ otherwise, where $\mathfrak a$ is an abelian Lie algebra with
$\dim\mathfrak a\leq 1$.
\end{Prp}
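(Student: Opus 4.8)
The plan is to invoke the classification results of P\'erez \cite{P}, specialized to the ground field $\R$, which provide both the list of possible decomposition types and the attached Lie algebra for each. The statement has two parts: the coproduct decomposition of $\mathcal D$ into the eight named blocks, and the identification of $\Der(A)$ up to isomorphism type in each block. I would treat the two parts in that order.

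For the decomposition, first I would recall from \cite{P} that any finite-dimensional division composition algebra over a field of characteristic $\neq 2,3$ is completely reducible as a module over its derivation algebra (this is stated in the Remark following Proposition~1.9 above), so that to each $A\in\mathcal D$ one may attach the partition $\Pi(A)$ of $8$ recording the dimensions of the irreducible summands. Since an isomorphism $A\to A'$ of division composition algebras intertwines the actions of $\Der(A)$ and $\Der(A')$ — indeed it induces a Lie algebra isomorphism $\Der(A)\to\Der(A')$ carrying the $A$-module structure to the $A'$-module structure — the partition $\Pi(A)$ is an isomorphism invariant. Hence $\mathcal D$ is the disjoint union of the full subcategories $\mathcal D_\Pi$ over those $\Pi$ that actually occur, and because homomorphisms of finite-dimensional division algebras are injective (so that there are no morphisms between non-isomorphic blocks), this disjoint union is in fact a coproduct in the categorical sense, exactly as in the already-cited Proposition~1.7. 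It then remains to quote from \cite{P} that, for $A$ eight-dimensional with non-abelian $\Der(A)$, the set $\{\dim A_i\}$ attains exactly the eight values listed; this is precisely the fact the Introduction attributes to \cite{P}, so the eight summands are the eight $\mathcal D_\Pi$ named in the statement.

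For the Lie algebra types, I would again read off \cite{P}: in each of the eight cases P\'erez identifies $\Der(A)$. The block $\mathcal D_{1,7}$ forces $\Der(A)$ of type $\mathfrak g_2$ acting on $\R 1\oplus 1^\bot$ (the octonion case and its isotopes). The two blocks $\mathcal D_{\{8\}}$ and $\mathcal D_{1,1,6}$ correspond to the eight-dimensional unital-isotope situation in which the derivation algebra is $\mathfrak{su}_3$ acting either irreducibly on all of $A$ or with a two-dimensional trivial part and an irreducible six-dimensional complement. The block $\mathcal D_{1,3,4}$ is the one genuinely bimodal case, where a $4$-dimensional irreducible summand can be acted on either by a second copy of $\mathfrak{su}_2$ (giving $\mathfrak{su}_2\times\mathfrak{su}_2$) or trivially (giving $\mathfrak{su}_2\times\mathfrak a$ with $\mathfrak a$ at most one-dimensional); and in all remaining blocks the largest irreducible summand has dimension $\leq 4$ with $\mathfrak{su}_2$ acting on a single $3$- or $4$-dimensional piece and the extra directions contributing at most a one-dimensional abelian factor, so $\Der(A)\cong\mathfrak{su}_2\times\mathfrak a$. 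I would assemble these case-by-case identifications into the single statement displayed.

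The main obstacle is not any computation but the bookkeeping needed to pass from the field-$k$ formulation in \cite{P} to the real case and to match P\'erez's parametrization of the blocks with the partitions $\Pi$ used here; in particular one must check that over $\R$ no further collapsing or splitting of the eight blocks occurs and that the abelian factor $\mathfrak a$ is forced to have $\dim\mathfrak a\le 1$ rather than larger. This amounts to tracking dimensions of centralizers of the $\mathfrak{su}_2$-action on the small summands, which is routine but must be stated carefully. Beyond that, the proof is essentially a citation of \cite{P} together with the categorical observation that the invariant $\Pi$ separates the blocks.
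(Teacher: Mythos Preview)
Your proposal is correct and matches the paper's approach: the paper does not prove Proposition~\ref{P: Decomp} at all but simply attributes it to \cite{P}, stating it as a quoted result. Your write-up supplies the surrounding categorical argument (that $\Pi(A)$ is an isomorphism invariant, so the $\mathcal D_\Pi$ form a genuine coproduct) which the paper leaves implicit, but the substance in both cases is a direct citation of P\'erez.
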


We will write $\mathcal D_{1,3,4}^s$ and $\mathcal D_{1,3,4}^a$ for the full subcategory of $\mathcal D_{1,3,4}$
in which the objects have a derivation algebra of type $\mathfrak {su}_2\times\mathfrak {su}_2$ and $\mathfrak
{su}_2\times\mathfrak {a}$, respectively. Thus $\mathcal D_{1,3,4}=\mathcal D_{1,3,4}^s\amalg\mathcal D_{1,3,4}^a$.

For each $A\in\mathcal D$, the sum of all one-dimensional $\Der(A)$-submodules of $A$ is the trivial submodule
\[A_0=\{a\in A|\forall \delta\in\Der(A): \delta(a)=0\}.\]
The following result is well-known.

\begin{Lma}\label{L: A0} Let $A\in \mathcal D$. Then $A_0$ is a proper composition subalgebra of $A$. Hence
$\dim A_0\in\{0,1,2,4\}$.
\end{Lma}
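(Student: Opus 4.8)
The plan is to show two things: first, that $A_0$ is closed under multiplication and inherits a nondegenerate multiplicative quadratic form, making it a composition subalgebra; and second, that $A_0 \neq A$, so that by the classification of dimensions of real division composition algebras (Albert's theorem, i.e.\ Proposition~\ref{P: Albert}), which forces composition subalgebras to have dimension in $\{1,2,4,8\}$, together with properness, we get $\dim A_0 \in \{0,1,2,4\}$.

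For closure under multiplication, I would take $a, b \in A_0$ and an arbitrary $\delta \in \Der(A)$, and apply the Leibniz rule: $\delta(ab) = \delta(a)b + a\delta(b) = 0 + 0 = 0$, so $ab \in A_0$. Thus $A_0$ is a subalgebra. It is automatically a composition algebra with the restriction of $q$, provided this restriction is nondegenerate; since $q$ is positive definite on $A$ by Proposition~\ref{P: Bridge}, its restriction to any subspace is again positive definite, hence nondegenerate, and it remains multiplicative by restriction. (One should also note $A_0$ is nonzero only when $\dim A_0 > 0$; when $\dim A_0 = 0$ there is nothing to prove, and otherwise $A_0$ is a genuine composition algebra in the sense defined — here one might remark that $A_0$ being a subalgebra of a division algebra is itself a division algebra, so it is a division composition algebra, hence absolute valued.)

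The properness of $A_0$ is where the actual content lies, and I expect this to be the main obstacle. If $A_0 = A$, then every derivation of $A$ annihilates all of $A$, i.e.\ $\Der(A) = 0$, which is in particular abelian. But $A \in \mathcal D = \mathcal D_8$ by definition has a non-abelian derivation algebra, a contradiction. Hence $A_0 \subsetneq A$. (Alternatively, and without invoking non-abelianness: Proposition~\ref{P: Decomp} already lists the possible module structures, and in every block $\mathcal D_\Pi$ occurring there the partition $\Pi$ contains a part $> 1$, so $A$ has an irreducible submodule of dimension $> 1$ on which $\Der(A)$ acts nontrivially, again forcing $A_0 \neq A$; but the non-abelianness argument is cleaner and self-contained.)

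Finally, I would combine the pieces: $A_0$ is a composition subalgebra of the $8$-dimensional composition algebra $A$, and by Albert's theorem (Proposition~\ref{P: Albert}) every finite-dimensional real composition algebra has dimension $1$, $2$, $4$, or $8$ — so $\dim A_0 \in \{1,2,4,8\}$ whenever $A_0 \neq 0$, and the properness just established rules out $\dim A_0 = 8$. Adjoining the trivial case $\dim A_0 = 0$ gives $\dim A_0 \in \{0,1,2,4\}$, as claimed. The only subtlety worth a sentence is the degenerate convention: $A_0 = 0$ is allowed (it is not a "non-zero" algebra and so not literally a composition subalgebra under the stated definitions), which is why $0$ appears in the list; the phrase "proper composition subalgebra" should be read as "$A_0 = 0$, or $A_0$ is a proper composition subalgebra."
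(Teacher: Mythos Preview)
Your proof is correct. The paper does not actually supply a proof of this lemma; it is stated with the preamble ``The following result is well-known'' and left unproved. Your argument fills the gap cleanly: closure of $A_0$ under multiplication via the Leibniz rule, nondegeneracy of the restricted form via positive definiteness (Proposition~\ref{P: Bridge}), properness from the hypothesis that $\Der(A)$ is non-abelian, and the dimension restriction from the classification of finite-dimensional composition (or absolute valued) algebras. One small remark: rather than invoking Proposition~\ref{P: Albert} together with Proposition~\ref{P: Bridge} to conclude that $A_0$ is absolute valued and hence of dimension $1$, $2$, $4$ or $8$, you could appeal directly to Kaplansky's theorem (mentioned in the introduction) that any finite-dimensional composition algebra over any field has dimension in $\{1,2,4,8\}$; either route is fine. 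Your handling of the degenerate case $A_0=0$ is also appropriate, since this case genuinely occurs (e.g.\ for $A\in\mathcal D_{\{8\}}$ or $\mathcal D_{3,5}$).
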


\begin{Rk}\label{R: Submodules}
Let $A,B\in\mathcal D$ and assume that $\phi:A\to B$ is an isomorphism. Then for each $\delta\in\Der(A)$ we have
$\phi\delta\phi^{-1}\in\Der(B)$. It follows that $\phi(A_0)=B_0$. Moreover,
if $C\subseteq A$ is an irreducible $\Der(A)$-submodule, then $\phi(C)$ is an irreducible $\Der(B)$-submodule. From the fact (see
\cite{EP}) that $\delta(x)\bot x$ for each $x\in A$, we see that the orthogonal projection of $C$ on any
submodule of $A$ is itself a submodule. From this it follows that if $A\in\mathcal D_\Pi$, and the natural number $d=\dim C$
occurs precisely once in the partition $\Pi$, then $C$ is the unique $d$-dimensional submodule of $A$, and $\phi(C)$ is the unique
$d$-dimensional submodule of $B$.
\end{Rk}

We will refer to the results on derivation algebras of division composition algebras obtained in \cite{P} along the way as we
need them. Those results which are not necessary for our arguments will be left out in order to keep this article within
reasonable limits. The reader is referred to \cite{P} for these results and their proofs.

\section{Eight-Dimensional Real Division Composition Algebras}
By Proposition \ref{P: Albert}, each $A\in \mathcal A_8$ is isomorphic to $\OO_{f,g}$ for some $f,g\in O(\OO)$. In this section,
we will recall some results on when two such algebras are isomorphic. To begin with, we will review some basic properties
of the automorphism group of $\OO$. This is a 14-dimensional compact
exceptional Lie group of type $G_2$, which we will denote by $G_2$ for notational simplicity. Identifying the groups $O_8$ and
$SO_8$ with $O(\OO)$ and $SO(\OO)$, respectively, we have $G_2<SO_8$. One way to visualize $G_2$ is by
using Cayley triples. A \emph{Cayley triple} is an orthonormal triple $(u,v,z)$, where $u,v,z\in\OO$ are purely imaginary (i.e.\
orthogonal to 1) and satisfy $z\bot uv$. Any permutation of a Cayley triple is a Cayley triple. Given a Cayley triple $(u,v,z)$,
the element $u$ generates the two-dimensional subalgebra $\langle 1,u\rangle \simeq\C$, and $\{u,v\}$ generates the
four-dimensional subalgebra
$\langle 1,u,v,uv\rangle \simeq\HH$.\footnote{If $V$ is a vectorspace and
$u_1,\ldots,u_n\in V$, then $\langle u_1,\ldots,u_n\rangle$ always denotes their linear span.} It is known that the group $G_2$
acts transitively on the set of all Cayley triples, and that
for any two
such triples there is a unique $\phi\in G_2$ mapping one to the other. Thus by fixing a Cayley triple $(u,v,z)$ once and for all,
the elements of $G_2$ correspond bijectively to the set of all Cayley triples via $\phi\mapsto(\phi(u),\phi(v),\phi(z))$. \\

\emph{For the remainder of this article, we fix a Cayley triple $(u,v,z)$ in $\OO$. In view of the above, we will often identify
the subalgebras $\langle 1,u\rangle$ and $\langle 1,u,v,uv\rangle$ of $\OO$ with $\C$ and $\HH$, respectively. Thus $\C$ and
$\HH$ become subspaces of $\OO_{f,g}$ for each $f,g\in O_8$.
Moreover, we denote $\mathbb S(\C)$ by $\mathbb S^1$ and $\mathbb S(\HH)$ by $\T$. Finally, we denote the standard involution
$x\mapsto\overline{x}$ on $\OO$ (and thus on $\C$ and $\HH$ as well) by $K$.}\\

Proposition \ref{P: Albert} and the above discussion imply the following well-known fact.

\begin{Lma}\label{L: Canonical} Let $A\in\mathcal A_8$ be unital, $B\subset A$ a four-dimensional unital subalgebra,
and $C\subset B$ a two-dimensional unital subalgebra. Then there is an isomorphism
$\phi:A\to\OO$ such that $\phi(B)=\langle 1,u,v,uv\rangle $ and $\phi(C)=\langle 1,u\rangle $.
\end{Lma}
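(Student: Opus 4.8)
The plan is to reduce this statement to a triple application of the transitivity of $G_2$ on Cayley triples. The strategy is to build an appropriate Cayley triple out of the given data $C \subset B \subset A$, and then transport it to the fixed Cayley triple $(u,v,z)$ by the unique element of $G_2$ that does so.

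First I would invoke Proposition \ref{P: Albert}: since $A \in \mathcal A_8$ is unital, it is isomorphic to some $\A \in \{\R,\C,\HH,\OO\}$, and by dimension it must be isomorphic to $\OO$. Fix such an isomorphism $\psi : A \to \OO$. Then $\psi(B)$ is a four-dimensional unital subalgebra of $\OO$ and $\psi(C) \subset \psi(B)$ is a two-dimensional unital subalgebra. So, replacing $A$ by $\OO$, it suffices to prove: given a four-dimensional unital subalgebra $B' \subset \OO$ and a two-dimensional unital subalgebra $C' \subset B'$, there is $\phi \in G_2 = \Aut(\OO)$ with $\phi(B') = \langle 1,u,v,uv\rangle$ and $\phi(C') = \langle 1,u\rangle$.

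Next I would manufacture a Cayley triple adapted to $(C', B')$. Since $C'$ is a two-dimensional unital composition subalgebra, its orthogonal complement in $C'$ with respect to $1$ is spanned by a purely imaginary element; normalizing, pick $u' \in \mathbb S(\OO)$ purely imaginary with $C' = \langle 1, u'\rangle$. Since $B'$ is four-dimensional unital with $C' \subset B'$, the space $B' \cap (C')^\perp$ (intersected with the imaginary part) is two-dimensional and orthogonal to $u'$; pick a purely imaginary unit $v' \in B'$ orthogonal to $1$ and to $u'$. One checks that $u'v'$ is then a purely imaginary unit orthogonal to $1, u', v'$, and that $\{u', v'\}$ generates $B' = \langle 1, u', v', u'v'\rangle$ — this is just the standard fact about how quaternion subalgebras of $\OO$ are generated, already recalled in the text. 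Finally choose any purely imaginary unit $z' \in \OO$ orthogonal to the four-dimensional space $B'$ (equivalently, $z' \perp u'v'$ and $z' \perp u', v', 1$); such a $z'$ exists since $B'^\perp \cap \langle 1 \rangle^\perp$ is three-dimensional. Then $(u', v', z')$ is a Cayley triple by definition.

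Now apply the transitivity statement recalled just above the lemma: there is a (unique) $\phi \in G_2$ with $\phi(u') = u$, $\phi(v') = v$, $\phi(z') = z$. Since $\phi$ is an algebra automorphism, $\phi(C') = \phi\langle 1, u'\rangle = \langle 1, u \rangle$ and $\phi(B') = \phi\langle 1, u', v', u'v'\rangle = \langle 1, u, v, uv\rangle$, using $\phi(u'v') = \phi(u')\phi(v') = uv$. Composing with $\psi$ gives the desired isomorphism $\phi\psi : A \to \OO$, completing the proof. The only point requiring a little care — and the step I would flag as the main (though minor) obstacle — is verifying that the $v'$ chosen inside $B'$ really does yield $u'v' \in B'$ with $\{u',v'\}$ generating all of $B'$, i.e.\ that a four-dimensional unital composition subalgebra of $\OO$ is precisely the subalgebra generated by any two of its anticommuting imaginary units; this is classical (it is the quaternion case of Proposition \ref{P: Albert} together with the description of $\HH$ via a Cayley pair), so no genuine difficulty arises.
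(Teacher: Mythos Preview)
Your proof is correct and follows precisely the route the paper indicates: the paper does not give an explicit proof of Lemma~\ref{L: Canonical}, stating only that it follows from Proposition~\ref{P: Albert} together with the preceding discussion of Cayley triples and the simple transitivity of $G_2$ on them. Your argument spells out exactly that implication --- first invoking Proposition~\ref{P: Albert} to identify $A$ with $\OO$, then building a Cayley triple $(u',v',z')$ adapted to the flag $C'\subset B'$, and finally using transitivity of $G_2$ --- so there is nothing to add.
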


We will often need to establish isomorphisms between  isotopes of arbitrary unital algebras in $\mathcal A_8$ and isotopes of
$\OO$. For this, we use the following result from
\cite{Ca}.

\begin{Lma}\label{L: Juggling} Let $A,B\in \mathcal A_8$ and assume there is an isomorphism $\phi: A\to B$. Then for each $f,g\in
O_8$, $\phi: A_{f,g}\to B_{\phi f\phi^{-1},\phi g \phi^{-1}}$
is an isomorphism.
\end{Lma}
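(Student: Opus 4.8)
The plan is to verify directly that $\phi$ intertwines the twisted multiplications. Let me denote by $\cdot$ the multiplication in $A_{f,g}$ and by $*$ the multiplication in $B_{\phi f\phi^{-1},\phi g\phi^{-1}}$, while juxtaposition denotes the (untwisted) multiplications in $A$ and in $B$. First I would recall that by definition $x\cdot y = f(x)g(y)$ (product taken in $A$) and $p * q = (\phi f\phi^{-1})(p)\,(\phi g\phi^{-1})(q)$ (product taken in $B$). Since $\phi:A\to B$ is an algebra isomorphism, it is in particular a bijective linear map, so $\phi:A_{f,g}\to B_{\phi f\phi^{-1},\phi g\phi^{-1}}$ is a well-defined bijective linear map; it remains only to check multiplicativity.

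\begin{proof}
Write $\cdot$ for the multiplication of $A_{f,g}$ and $*$ for that of $B_{\phi f\phi^{-1},\phi g\phi^{-1}}$, and denote the multiplications of $A$ and $B$ by juxtaposition. The map $\phi$ is a linear bijection from $A=A_{f,g}$ to $B=B_{\phi f\phi^{-1},\phi g\phi^{-1}}$, so we need only verify that it is multiplicative. For $x,y\in A$,
\[
\phi(x\cdot y)=\phi\big(f(x)g(y)\big)=\phi(f(x))\,\phi(g(y)),
\]
the last equality because $\phi$ is an algebra homomorphism $A\to B$. Now $\phi(f(x))=(\phi f\phi^{-1})(\phi(x))$ and $\phi(g(y))=(\phi g\phi^{-1})(\phi(y))$, so
\[
\phi(x\cdot y)=(\phi f\phi^{-1})(\phi(x))\,(\phi g\phi^{-1})(\phi(y))=\phi(x)*\phi(y).
\]
Hence $\phi$ is an isomorphism of algebras $A_{f,g}\to B_{\phi f\phi^{-1},\phi g\phi^{-1}}$. (Note that $\phi f\phi^{-1}$ and $\phi g\phi^{-1}$ are orthogonal operators on $B$: since $\phi$ is an isomorphism of absolute valued algebras it is an isometry for the norms by Proposition \ref{P: Albert}, and a conjugate of an orthogonal operator by an isometry is again orthogonal, so the target is indeed a legitimate orthogonal isotope.)
\end{proof}

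There is essentially no obstacle here; the only point requiring a moment's care is confirming that $\phi f\phi^{-1}$ and $\phi g\phi^{-1}$ really are orthogonal, so that $B_{\phi f\phi^{-1},\phi g\phi^{-1}}$ is a bona fide object of $\mathcal A_8$ — this follows because isomorphisms in $\mathcal A$ preserve the norm (Proposition \ref{P: Albert}), hence are orthogonal, and orthogonality is preserved under conjugation by an orthogonal map. The computation itself is a one-line unwinding of the definition of an orthogonal isotope together with the homomorphism property of $\phi$.
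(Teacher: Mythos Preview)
Your proof is correct and is exactly the straightforward verification one expects; the paper itself does not supply a proof of this lemma but simply cites it from \cite{Ca}. One small remark: the justification that $\phi$ is an isometry is not really Proposition~\ref{P: Albert} but rather the paper's earlier observation that the norm of a finite-dimensional absolute valued algebra is uniquely determined by its multiplication, whence any algebra isomorphism is automatically norm-preserving; with that adjustment your parenthetical about $\phi f\phi^{-1},\phi g\phi^{-1}\in O(B)$ is fine.
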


This was used in \cite{Ca} to prove the following isomorphism condition.

\begin{Prp}\label{P: Ca} Let $f,g,f',g'\in O_8$. Then $\phi:\OO_{f,g}\to\OO_{f',g'}$ is an isomorphism if and
only if $\phi\in SO_8$ and 
\begin{equation}\label{Iso}
 \begin{array}{lll} \phi_1f\phi^{-1}=f' &\text{and}&\phi_2g\phi^{-1}=g',\end{array}
\end{equation}
where $(\phi_1,\phi_2)$ is a triality pair of $\phi$.
\end{Prp}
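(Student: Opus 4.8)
The plan is to reduce the isomorphism problem for isotopes of $\OO$ to a condition on $SO_8$ and triality, exploiting the principle of triality for the norm form of $\OO$. First I would record the standard setup: for $\phi \in O_8$, a \emph{triality pair} of $\phi$ is a pair $(\phi_1, \phi_2)$ of elements of $O_8$ satisfying $\overline{\phi_1(x)\phi_2(y)} = \overline{\phi_2(y)}\,\overline{\phi_1(x)}$ in the sense that $\phi(xy) = \phi_1(x)\phi_2(y)$ for all $x,y \in \OO$; such a pair exists precisely when $\phi \in SO_8$, and is unique up to the sign ambiguity inherent in triality (which, for the connected group $SO_8$, can be pinned down). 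I would take as known (it is the content of the local triality principle, or can be cited) that $\phi \in SO_8$ admits a triality pair and that conversely any $\phi$ admitting one lies in $SO_8$; this immediately forces the ``only if'' direction to include $\phi \in SO_8$.

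Next I would unwind what it means for $\phi : \OO_{f,g} \to \OO_{f',g'}$ to be an algebra homomorphism. Writing $\cdot$ and $\ast$ for the multiplications in $\OO_{f,g}$ and $\OO_{f',g'}$ respectively, the homomorphism condition $\phi(x \cdot y) = \phi(x) \ast \phi(y)$ reads $\phi(f(x)g(y)) = f'(\phi(x))g'(\phi(y))$ for all $x, y \in \OO$, where juxtaposition is multiplication in $\OO$. Assuming $\phi \in SO_8$ with triality pair $(\phi_1, \phi_2)$, the left-hand side is $\phi_1(f(x))\phi_2(g(y))$. Thus $\phi$ is an isomorphism if and only if
\[
\phi_1 f(x)\,\phi_2 g(y) = f'\phi(x)\,g'\phi(y) \quad \text{for all } x, y \in \OO.
\]
Substituting $x \mapsto f^{-1}(x)$ and $y \mapsto g^{-1}(y)$ turns this into $\phi_1(x)\,\phi_2(y) = f'\phi f^{-1}(x)\,g'\phi g^{-1}(y)$ for all $x,y$. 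The key step is then to argue that an identity of the form $a(x)\,b(y) = c(x)\,d(y)$ holding for all $x, y \in \OO$, with $a,b,c,d \in O_8$, forces $a = c$ and $b = d$: indeed, fixing $y$ with $b(y) = 1$ (using surjectivity of $b$) gives $a = c$ after right-multiplying by the inverse of $1$, wait—one must be slightly careful since $\OO$ is nonassociative, but fixing $y_0$ with $b(y_0) = 1$ yields $a(x) = c(x)\,d(y_0)$, i.e. $a = R_{d(y_0)} c$; similarly fixing $x_0$ with $a(x_0) = 1$ gives $b = L_{c(x_0)} d$; combining, one solves for the constants $c(x_0) = $ (value making $a(x_0)=1$ consistent), pinning down $d(y_0) = 1$ and $c(x_0) = 1$, hence $a = c$, $b = d$. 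This yields $\phi_1 = f'\phi f^{-1}$ and $\phi_2 = g'\phi g^{-1}$, which rearrange to \eqref{Iso}.

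For the converse, if $\phi \in SO_8$ with triality pair $(\phi_1, \phi_2)$ satisfies \eqref{Iso}, then reversing the computation above shows directly that $\phi$ intertwines $\cdot$ and $\ast$, hence is an isomorphism (it is bijective since $\phi \in SO_8$). The main obstacle I anticipate is the bookkeeping around triality: making the notion of ``a triality pair of $\phi$'' precise, handling the two-fold sign ambiguity so that \eqref{Iso} is a well-posed statement, and verifying the nonassociative cancellation identity $a(x)b(y) = c(x)d(y) \,\Rightarrow\, a = c,\ b = d$ carefully rather than by naive manipulation. Everything else is a direct substitution. Since this result is quoted from \cite{Ca}, I would keep the argument brief, citing the triality principle and Lemma \ref{L: Juggling} where convenient, and present the computation above as the essential content.
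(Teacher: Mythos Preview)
The paper does not prove this proposition; it is quoted from \cite{Ca}. Your overall strategy---unwind the homomorphism condition, substitute to obtain $\phi(xy)=f'\phi f^{-1}(x)\cdot g'\phi g^{-1}(y)$, and compare with $\phi(xy)=\phi_1(x)\phi_2(y)$---is the right one, and the converse direction is fine.

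There is, however, a genuine gap in your cancellation step. The claim that $a(x)b(y)=c(x)d(y)$ for all $x,y$ forces $a=c$ and $b=d$ is false: it only forces $(c,d)=\pm(a,b)$. Your argument yields $a=R_{d(y_0)}c$ and $b=L_{c(x_0)}d$, and evaluating at $x_0$ and $y_0$ gives only the single relation $c(x_0)d(y_0)=1$, not that each factor is $1$. Writing $p=c(x_0)$ and $q=d(y_0)=\bar p$, the identity becomes $(u\bar p)(pv)=uv$ for all $u,v\in\OO$, which fails for $p\notin\{\pm 1\}$ by non-associativity (try $p=i$, $u=j$, $v=\ell$ in $\OO=\HH\oplus\HH\ell$). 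The clean fix is precisely the fact you already mention: the equation $\phi(xy)=f'\phi f^{-1}(x)\cdot g'\phi g^{-1}(y)$ exhibits $(f'\phi f^{-1},g'\phi g^{-1})$ as a triality pair of $\phi$, and uniqueness of triality pairs up to sign then gives $(f'\phi f^{-1},g'\phi g^{-1})=\pm(\phi_1,\phi_2)$. The sign is absorbed into the phrase ``a triality pair'' in the statement. This also streamlines the argument that $\phi\in SO_8$: once you know $\phi\in O_8$ (morphisms in $\mathcal A$ are isometries), the displayed equation shows $\phi$ admits a triality pair, hence lies in $SO_8$.
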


A \emph{triality pair} of $\phi\in SO_8$ is a pair $(\phi_1,\phi_2)\in SO_8^2$ satisfying $\phi(xy)=\phi_1(x)\phi_2(y)$ for any
$x,y\in\OO$. The Principle of triality, due to E.\ Cartan \cite{C}, asserts that for each $\phi\in SO_8$, a triality pair exists
and is
unique up to sign. It follows from the definition that if $(\phi_1,\phi_2)$ is a triality pair of $\phi$, then
\begin{equation}\label{E: Triality}
 \begin{array}{lll}\phi_1=R_{\overline{\phi_2(1)}}^\OO\phi &\text{and}& \phi_2=L_{\overline{\phi_1(1)}}^\OO\phi.\end{array}
\end{equation}

Using the above result, we obtained in \cite{A2} a description of the category $\mathcal A_8$. This description is given by the
quadruple 
\[(SO_8,[O_8\times O_8],\tau,\mathcal T),\]
where $[O_8\times O_8]$ is the quotient $(O_8\times O_8)/\{\pm(\Id,\Id)\}$, and $\tau$ is the \emph{triality action}
\[\begin{array}{ll}SO_8\times[O_8\times O_8]\to[O_8\times O_8],& (\phi,[f,g])\mapsto
\left[\phi_1 f\phi^{-1},\phi_2 g \phi^{-1}\right],\end{array}\]
where $(\phi_1,\phi_2)$ is any triality pair of
$\phi$. The functor 
\[\mathcal T: \phantom{}_{SO_8}[O_8\times O_8]\to \mathcal A_8\]
acts on objects by $[f,g]\mapsto\OO_{f,g}$, and on morphisms by $\phi\mapsto\phi$.

By definition of an automorphism, $\phi\in SO_8$ belongs to $G_2$ if and only if $(\phi,\phi)$ is a triality pair of
$\phi$. (By \eqref{E: Triality}, this is equivalent to $\phi$ having a triality pair $(\phi_1,\phi_2)$ with
$\phi(1)=\phi_1(1)=\phi_2(1)=1$.) Thus if one
restricts the triality action to $G_2<SO_8$, one obtains the action
\[\begin{array}{ll}
\gamma: G_2\times [O_8\times O_8]\to [O_8\times O_8], & (\phi,[f,g])\mapsto\left[\phi f\phi^{-1},\phi g \phi^{-1}\right].
  \end{array}
\]
However, the inclusion functor
\[\phantom{}_{G_2}[O_8\times O_8]\to \phantom{}_{SO_8}[O_8\times O_8],\]
is obviously not full, and therefore not all morphisms in $\mathcal A_8$ belong to $G_2$. The following lemma presents one
condition under which all morphisms between two algebras $\OO_{f,g}$ and $\OO_{f',g'}$ belong to $G_2$. As it turns out, it will
often be the case that the algebras we consider here satisfy this condition.

\begin{Lma}\label{L: G2} Let $f,g,f',g'\in O_8$. If there exists a subspace $\{0\}\neq U\subseteq \R^8$ such that the restriction
of
each of $f$, $g$, $f'$ and $g'$ to $U$ is the identity, then \linebreak $\phi:\OO_{f,g}\to\OO_{f',g'}$ is an isomorphism with
$\phi(U)=U$ if
and only if $\phi\in G_2$, $\phi(U)=U$ and $(f',g')=\left(\phi f\phi^{-1},\phi g \phi^{-1}\right)$.
\end{Lma}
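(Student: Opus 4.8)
The plan is to prove both implications by reducing everything to Proposition \ref{P: Ca}, using the hypothesis on $U$ to force the triality pair of an isomorphism to consist of $\phi$ itself twice, i.e.\ to force $\phi\in G_2$.

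\smallskip
First I would treat the easy direction. Suppose $\phi\in G_2$, $\phi(U)=U$, and $(f',g')=(\phi f\phi^{-1},\phi g\phi^{-1})$. Since $\phi\in G_2<SO_8$, the pair $(\phi,\phi)$ is a triality pair of $\phi$, so \eqref{Iso} holds with $\phi_1=\phi_2=\phi$; by Proposition \ref{P: Ca}, $\phi:\OO_{f,g}\to\OO_{f',g'}$ is an isomorphism. The condition $\phi(U)=U$ is assumed, so this direction is immediate.

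\smallskip
For the converse, assume $\phi:\OO_{f,g}\to\OO_{f',g'}$ is an isomorphism with $\phi(U)=U$. By Proposition \ref{P: Ca}, $\phi\in SO_8$, and for any triality pair $(\phi_1,\phi_2)$ of $\phi$ we have $\phi_1 f\phi^{-1}=f'$ and $\phi_2 g\phi^{-1}=g'$. The key point is to pin down $\phi(1)$. Pick any nonzero $w\in U$. Since $f$ and $g$ fix $U$ pointwise, $\phi_1 f\phi^{-1}(\phi(w))=f'(\phi(w))$, and $f'$ fixes $U\ni\phi(w)$ pointwise, so $\phi_1(w)=\phi(w)$; similarly $\phi_2(w)=\phi(w)$. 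Thus $\phi_1$ and $\phi_2$ agree with $\phi$ on the nonzero subspace $U$. Now I would use \eqref{E: Triality}: we have $\phi_1=R^\OO_{\overline{\phi_2(1)}}\phi$ and $\phi_2=L^\OO_{\overline{\phi_1(1)}}\phi$. Applying the first of these to $w\in U$ gives $\phi(w)=\phi_1(w)=\phi(w)\,\overline{\phi_2(1)}$, i.e.\ right multiplication by $\overline{\phi_2(1)}$ fixes the nonzero element $\phi(w)$ of $\OO$; since $\OO$ is a division algebra, this forces $\overline{\phi_2(1)}=1$, hence $\phi_2(1)=1$, and symmetrically $\phi_1(1)=1$. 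Then $\phi(1)=\phi_1 f\phi^{-1}$ applied appropriately, or more directly: from $\phi_1=R^\OO_{\overline{\phi_2(1)}}\phi=\phi$ we get $\phi_1=\phi$, and likewise $\phi_2=\phi$, so $(\phi,\phi)$ is a triality pair of $\phi$, which by the remark preceding the lemma means exactly $\phi\in G_2$. Finally \eqref{Iso} becomes $\phi f\phi^{-1}=f'$ and $\phi g\phi^{-1}=g'$, and $\phi(U)=U$ is part of the hypothesis, completing the proof.

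\smallskip
The main obstacle is the step forcing $\phi_1(1)=\phi_2(1)=1$: one must correctly exploit that $\phi_1,\phi_2$ agree with $\phi$ on $U$ and translate this, via the identities \eqref{E: Triality} relating a triality pair to left/right multiplications, into a statement that a one-sided multiplication operator in $\OO$ has a nonzero fixed vector, so that the division property of $\OO$ (Proposition \ref{P: Bridge}) collapses the ambiguity in the triality pair. Once $\phi(1)=\phi_1(1)=\phi_2(1)=1$ is established, everything else is a direct application of Proposition \ref{P: Ca} and the characterization of $G_2$ as the stabilizer of the triality diagonal.
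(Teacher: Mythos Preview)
Your argument is correct and follows essentially the same route as the paper: apply Proposition~\ref{P: Ca}, evaluate the resulting equations on an element of $U$, and use the division property of $\OO$ together with \eqref{E: Triality} to force $\phi_1(1)=\phi_2(1)=1$ and hence $\phi\in G_2$. One small wording slip: Proposition~\ref{P: Ca} gives \eqref{Iso} for \emph{some} triality pair, not for \emph{any} triality pair (the other pair $(-\phi_1,-\phi_2)$ would yield $-f'$ and $-g'$); your subsequent reasoning only uses one such pair, so this does not affect the argument. Your version is in fact slightly cleaner than the paper's: by reading off $\phi_1=R^\OO_{\overline{\phi_2(1)}}\phi=\phi$ directly from \eqref{E: Triality}, you avoid the paper's detour through the quotient $[O_8\times O_8]$ and the separate sign resolution via the restriction to $U$.
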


\begin{proof} 
If $\phi\in G_2$ and $(f',g')=\left(\phi f\phi^{-1},\phi g \phi^{-1}\right)$, then $(f',g')=\left(\phi_1
f\phi^{-1},\phi_2g\phi^{-1}\right)$ for the triality pair $(\phi_1,\phi_2)=(\phi,\phi)$ of $\phi$, and by Proposition \ref{P: Ca},
$\phi$ is an
isomorphism. If, conversely, $\phi: \OO_{f,g}\to\OO_{f',g'}$ is an isomorphism, then for some triality pair $(\phi_1,\phi_2)$ of
$\phi$,
\[\begin{array}{lll} R^\OO_{\overline{\phi_2(1)}}\phi f \phi^{-1}=\phi_1 f\phi^{-1}=f'& \text{and}&
L_{\overline{\phi_1(1)}}^\OO\phi g \phi^{-1}=\phi_2 g\phi^{-1}=g'.\end{array}
                                                                                                                      \]
Now $\phi f\phi^{-1}$ and $\phi g\phi^{-1}$ act as identity on $U$, whence applying both sides of the two above equalities to any
$u\in U\setminus 0$ one obtains the equalities
\[\begin{array}{lll} u=\phi_2(1)u & \text{and}& u=\phi_1(1)u\end{array}\]
in $\OO$. Since $\OO$ is a unital division algebra we get $\phi_1(1)=\phi_2(1)=1$, whence\linebreak $\phi(1)=1$, and altogether
$\phi\in
G_2$. Thus
$[f',g']=\left[\phi f\phi^{-1},\phi g\phi^{-1}\right]$, and then $(f',g')=(\phi f\phi^{-1},\phi g\phi^{-1})$ since e.g.\
$\phi f\phi^{-1}$ and $f'$ must have the same
sign,
by the condition on their restrictions to $U$.
\end{proof}

\begin{Rk}\label{R: U1} Choosing $U=\langle 1\rangle$ in the lemma, we deduce the fact obtained in \cite{Ca} that if
$f(1)=g(1)=f'(1)=g'(1)=1$,
then $\phi:\OO_{f,g}\to\OO_{f',g'}$ is an isomorphism fixing 1 if and only if $\phi\in G_2$ and $(f',g')=(\phi f\phi^{-1},
\phi g \phi^{-1})$.
\end{Rk}

\section{Descriptions in Dimension Eight}
In this section, we will construct a description of the subcategory $\mathcal B^{ij}$ of $\mathcal D$ for each
$(i,j)\in\Z_2^2$, and each $\mathcal B$ among
\[\begin{array}{llllll}
\mathcal D_{1,7},&
\mathcal D_{1,1,6},&
\mathcal D_{1,3,4},&
\mathcal D_{1,1,2,4}&\text{and}&
\mathcal D_{1,1,1,1,4}.
\end{array}\]

The subcategory $\mathcal D_{\{8\}}$ has been studied in several papers, giving a classification which we quote
below. We will return to all these subcategories in Section 4, using the approach via quasi-descriptions. The reader may thus see
this section as a means of paving the road for the next. The remaining two subcategories $\mathcal D_{3,5}$ and $\mathcal
D_{1,1,3,3}$ will be treated in Section 5.

\subsection{The Subcategory $\mathcal D_{1,7}$}
Let $A\in\mathcal A_8$. By Proposition \ref{P: Decomp}, $A\in\mathcal D_{1,7}$ if and only if $\Der(A)$ is of type $\mathfrak
{g}_2$. By \cite{EP}, this holds if and
only if
$A$ is \emph{standard}, that is, 
$A\simeq\OO_{K^j,K^i}$ for some $(i,j)\in\Z_2^2$. For each of these isotopes, the trivial submodule is the subspace $\langle
1\rangle$. The fact that these four isotopes of $\OO$ are pairwise non-isomorphic is
well-known (and follows from e.g.\ Proposition \ref{P: Double Sign}), as is the fact that their
automorphism groups coincide with the automorphism group of $\OO$. From these observations, the following descriptions of the
double sign
components of $\mathcal D_{1,7}$ are immediately obtained.

\begin{Prp}\label{P: 1,7} The group $G_2$ acts trivially on the singleton set $\{*\}$, and for each $(i,j)\in\Z_2^2$, the map
\[\phantom{}_{G_2}\{*\}\to \mathcal D_{1,7}^{ij}\]
acting on objects by $\{*\}\mapsto \OO_{K^j,K^i}$, and on morphisms by $\phi\mapsto\phi$, is an
equivalence of categories. 
\end{Prp}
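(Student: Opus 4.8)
The plan is to assemble the equivalence from the facts already collected in the subsection. Since this proposition is essentially a bookkeeping consequence of the structural results quoted from \cite{EP}, the proof will be short, and the ``main obstacle'' is really just checking that nothing is lost when passing to double-sign components and that the functor is genuinely an equivalence (full, faithful, and essentially surjective) rather than merely an isomorphism on object sets.

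First I would recall, from the opening of the subsection, that $A\in\mathcal D_{1,7}$ if and only if $A\simeq\OO_{K^j,K^i}$ for some $(i,j)\in\Z_2^2$, and that the four algebras $\OO_{K^j,K^i}$, $(i,j)\in\Z_2^2$, are pairwise non-isomorphic. The pairwise non-isomorphism I would justify via Proposition~\ref{P: Double Sign} together with Remark~\ref{R: Double Sign}: each $\OO_{K^j,K^i}$ is an orthogonal isotope of the unital algebra $\OO$ by the operators $f=K^j$, $g=K^i$, so it has double sign $(\det K^i,\det K^j)=((-1)^i,(-1)^j)$ (using $\det K=-1$ on $\OO$, since $K$ acts as $-1$ on the seven-dimensional space $1^\perp$). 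Hence $\OO_{K^j,K^i}\in\mathcal D_{1,7}^{ij}$, and distinct pairs $(i,j)$ give algebras in distinct blocks $\mathcal A_8^{ij}$, so they cannot be isomorphic. This simultaneously shows that $\mathcal D_{1,7}^{ij}$ is non-empty and consists, up to isomorphism, of the single object $\OO_{K^j,K^i}$; in particular essential surjectivity of the proposed functor is immediate, since every object of $\mathcal D_{1,7}^{ij}$ is isomorphic to $\OO_{K^j,K^i}$.

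Next I would identify the morphisms. The proposed functor $\mathcal G^{ij}\colon \phantom{}_{G_2}\{*\}\to\mathcal D_{1,7}^{ij}$ sends the unique object $*$ to $\OO_{K^j,K^i}$ and a morphism $\phi\in G_2=\phantom{}_{G_2}\{*\}(*,*)$ to $\phi$ viewed as a map $\OO_{K^j,K^i}\to\OO_{K^j,K^i}$. To see this is well-defined and that $\mathcal G^{ij}$ is full and faithful, I would apply Remark~\ref{R: U1} (equivalently Lemma~\ref{L: G2} with $U=\langle 1\rangle$): since $K^i(1)=K^j(1)=1$, a map $\phi\in SO_8$ is an isomorphism $\OO_{K^j,K^i}\to\OO_{K^j,K^i}$ fixing $1$ if and only if $\phi\in G_2$ and $(K^j,K^i)=(\phi K^j\phi^{-1},\phi K^i\phi^{-1})$. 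The latter equations hold automatically for every $\phi\in G_2$, because $\phi$ fixes $1$ and maps $1^\perp$ to itself, and $K$ is (up to sign) the identity on $\langle 1\rangle$ and $-\Id$ on $1^\perp$, so $\phi$ commutes with $K$ and hence with $K^i$, $K^j$. Thus $\Hom_{\mathcal D_{1,7}^{ij}}(\OO_{K^j,K^i},\OO_{K^j,K^i})$ is exactly the set of elements of $G_2$ fixing $1$, which is all of $G_2$; that every automorphism of $\OO_{K^j,K^i}$ fixes $1=1^{-1}$ follows since an automorphism must preserve $A_0=\langle 1\rangle$ by Remark~\ref{R: Submodules} and acts as an isometry, hence as $\pm\Id$ on $\langle 1\rangle$, and fixes $1$ because it is multiplicative (or because $1$ is the unique idempotent in $A_0$ of norm one). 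Hence $\mathcal G^{ij}$ is a bijection on the single Hom-set, and since all Hom-sets between non-isomorphic objects are empty in both categories (the source is the group $G_2$ as a one-object groupoid, the target has only one iso-class), $\mathcal G^{ij}$ is fully faithful.

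Finally I would assemble: $\mathcal G^{ij}$ is essentially surjective (every object of $\mathcal D_{1,7}^{ij}$ is isomorphic to $\OO_{K^j,K^i}$) and fully faithful, hence an equivalence of categories, which is the assertion. I do not expect any real obstacle here; the only point requiring a line of care is the verification that every element of $G_2$ really is an automorphism of each isotope $\OO_{K^j,K^i}$ (not just of $\OO$), and that this accounts for \emph{all} automorphisms — both of which follow cleanly from Remark~\ref{R: U1} and the commutation of $G_2$ with powers of $K$.
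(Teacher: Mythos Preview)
Your proposal is correct and follows the same line as the paper, which in fact gives no formal proof at all: the proposition is stated as ``immediately obtained'' from the preceding observations (that $\mathcal D_{1,7}$ consists precisely of the standard algebras $\OO_{K^j,K^i}$, that these are pairwise non-isomorphic by double sign, and that their automorphism groups all equal $G_2$). Your argument simply spells out the details, and your use of Remark~\ref{R: U1} together with the commutation of $G_2$ with $K$ to verify $\Aut(\OO_{K^j,K^i})=G_2$ is a clean, self-contained alternative to the paper's appeal to this as a well-known fact.
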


\subsection{The Subcategories $\mathcal D_{\{8\}}$ and $\mathcal D_{1,1,6}$}
We begin with the following characterization of $\mathcal D_{\{8\}}$ due to \cite{EP}.

\begin{Lma} Let $A\in\mathcal A_8$. Then $A\in \mathcal D_{\{8\}}$ if and only if $A$
is an Okubo algebra. 
\end{Lma}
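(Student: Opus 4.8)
The plan is to prove that $A \in \mathcal{D}_{\{8\}}$, i.e.\ that $A$ is an eight-dimensional real division composition algebra whose derivation algebra is irreducible (the partition being $\{8\}$), if and only if $A$ is an Okubo algebra. Since this is quoted as being due to \cite{EP}, I expect the proof to consist mostly of assembling known facts about Okubo algebras and about the structure of $\mathfrak{su}_3$-modules, rather than of genuinely new work.

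\emph{First direction.} Suppose $A$ is an Okubo algebra. Then it is a pseudo-octonion algebra, known to be an eight-dimensional central simple division composition algebra over $\R$, so $A \in \mathcal{A}_8$. Its derivation algebra is known to be of type $\mathfrak{su}_3$ (this is the classical computation of Elduque--Myung), and the natural eight-dimensional module is the adjoint module of $\mathfrak{su}_3$, which is irreducible. Hence $A$ has no nonzero proper $\Der(A)$-submodule, so the associated partition of $8$ is $\{8\}$, i.e.\ $A \in \mathcal{D}_{\{8\}}$. The only point requiring a little care is to confirm that $A$ is a \emph{division} algebra over $\R$ and not merely a composition algebra; but by Proposition~\ref{P: Bridge} this follows once one checks that the norm of the Okubo algebra is positive definite, which it is.

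\emph{Second direction.} Conversely, suppose $A \in \mathcal{D}_{\{8\}}$. By Proposition~\ref{P: Decomp}, $\Der(A)$ is then of type $\mathfrak{su}_3$, and $A$ is an irreducible module over it. One then invokes the structural results of P\'erez \cite{P} (or directly Elduque--P\'erez \cite{EP}): a finite-dimensional division composition algebra with derivation algebra $\mathfrak{su}_3$ acting irreducibly must, after the analysis of possible isotopes of $\OO$ compatible with such a symmetry, be isomorphic to an Okubo algebra. Concretely, one writes $A = \OO_{f,g}$ using Proposition~\ref{P: Albert}, uses the constraint that $\Der(A) \cong \mathfrak{su}_3$ acts without nonzero fixed points (so in particular $A_0 = 0$, forcing $f$ and $g$ to have no common fixed vector, in contrast to the $\mathcal{D}_{1,7}$ case where $f,g \in \langle K \rangle$), and narrows $f,g$ down to the specific orthogonal operators yielding the Okubo multiplication.

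\emph{Main obstacle.} The genuinely delicate part is the converse: showing that irreducibility of the $\Der(A)$-action together with $\Der(A)$ being of type $\mathfrak{su}_3$ pins down $A$ up to isomorphism as an Okubo algebra, rather than just determining its module structure. This is exactly the content that is being imported from \cite{EP}/\cite{P}, so in the present paper I would not reprove it from scratch; I would cite it, perhaps recalling that the key mechanism is that an irreducible $\mathfrak{su}_3$-module structure on an eight-dimensional composition algebra forces the product to be, up to isotopy by automorphisms, the standard Okubo product, combined with the fact (from the triality description, Proposition~\ref{P: Ca}) that the relevant isotopy classes collapse to a single isomorphism class. The remaining verifications (positive-definiteness of the norm, dimension count, irreducibility of the adjoint $\mathfrak{su}_3$-module) are routine.
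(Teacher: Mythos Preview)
Your proposal is correct and matches the paper's approach: the paper does not prove this lemma at all but simply attributes it to \cite{EP}, and you likewise recognize that both directions are imported from Elduque--P\'erez, with only some explanatory context added. Your sketch of the forward direction (Okubo algebras have $\Der(A)\cong\mathfrak{su}_3$ acting irreducibly via the adjoint module) and your acknowledgment that the converse is the substantive content being cited are both accurate.
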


Okubo algebras have been treated extensively in the literature. For a definition, see e.g\ \cite{EM}. The below construction is
readily derived from \cite{P}.

\begin{Ex}\label{E: EM}
Define the algebra
\[P^{11}=\OO_{K\tau,K\tau^{-1}},\]
where $\tau=\tau_{\left(\sqrt 3 u-1\right)/2}\in G_2$ is defined by $(u,v,z)\mapsto \left(u,v,z\left(\sqrt 3 u-1\right)/2\right)$.
Then $P^{11}$ is a division
Okubo algebra with double sign $(-,-)$.
\end{Ex}

In \cite{EM} it was shown that there is a unique isomorphism class of real division Okubo algebras. This implies that $\mathcal
D_{\{8\}}=\mathcal D_{\{8\}}^{11}$ is classified by $\{P^{11}\}$. 

For $\mathcal D_{1,1,6}$, a dense subset together with an isomorphism condition were given in \cite{EP} as
follows.

\begin{Lma}\label{L: EP} Let $A\in\mathcal A_8$. Then $A\in\mathcal D_{1,1,6}$ if and only if $A\simeq
C_{f,g}$, where $C\in\mathcal A_8$ is unital with standard involution $J$, and where for some two-dimensional unital subalgebra
$B$ of $C$, the pair $(f,g)\in O(C)^2$ satisfies
\begin{enumerate}[(i)]
\item $f(B)=g(B)=B$ and $f|_{B^\bot}=g|_{B^\bot}=\Id|_{B^\bot}$, and
\item $f, g\notin\{\Id,-J\}$.
\end{enumerate}
Moreover, if $(f,g),(f',g')\in O(C)^2$ satisfy (i) and (ii) for subalgebras $B$ and $B'$, respectively, then
$\phi: C_{f,g}\to C_{f',g'}$ is an isomorphism if and only if $\phi\in\Aut(C)$ with $\phi f \phi^{-1}=f'$ and $\phi
g\phi^{-1}=g'$. Finally if $B=B'$, then $C_{f,g}\simeq C_{f',g'}$ if and only if 
\[\begin{array}{lll} J^k f J^k=f'& \text{and} & J^lgJ^l=g'\end{array}\]
for some $(k,l)\in\Z_2^2$.
\end{Lma}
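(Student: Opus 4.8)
The plan is to treat this as a corollary of the general theory already assembled, most notably Proposition \ref{P: Decomp}, Proposition \ref{P: Albert}, the isomorphism condition of Proposition \ref{P: Ca}, and the $G_2$-reduction Lemma \ref{L: G2}. I cite \cite{EP} for the structural input. First I would settle the ``only if'' direction of the characterization: given $A\in\mathcal D_{1,1,6}$, Proposition \ref{P: Decomp} tells us $\Der(A)$ is of type $\mathfrak{su}_3$ and that, as a $\Der(A)$-module, $A$ decomposes as $A_0\oplus A_2\oplus A_6$ with $\dim A_0=2$, $\dim A_2=1\oplus 1$ (wait --- more carefully, $A=A_1\oplus A_1'\oplus A_6$ with the two trivial summands spanning the $2$-dimensional trivial submodule $A_0$ of Lemma \ref{L: A0}). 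By Lemma \ref{L: A0}, $A_0$ is a $2$-dimensional composition subalgebra, hence by Proposition \ref{P: Albert} isomorphic to $\C$; choosing a unital composition algebra $C$ containing a copy $B$ of $\C$ via Lemma \ref{L: Canonical} and writing $A\simeq C_{f,g}$ via Proposition \ref{P: Albert}, one checks that the $\mathfrak{su}_3$-action forces $f,g$ to preserve $B$ and to restrict to the identity on $B^\bot$ (this is exactly the computation in \cite{EP}: a derivation vanishing on $A_0$ and acting irreducibly on the complement is incompatible with a nontrivial action of $f$ or $g$ there). Condition (ii) is the genericity needed to exclude the cases where the derivation algebra jumps to $\mathfrak{g}_2$ (when $f,g\in\{\Id\}$, giving a standard algebra) or degenerates (the value $-J$ is the other exceptional fibre). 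Conversely, for $C_{f,g}$ with (i) and (ii), one verifies directly that the derivations of $C$ fixing $B$ pointwise act as $\mathfrak{su}_3$ on $C_{f,g}$ and that no larger derivation algebra occurs, again following \cite{EP}.

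Next, the isomorphism statement. Here the key observation is that both $B$ and $B'$, being the respective trivial submodules $A_0$, are carried to one another by any isomorphism (Remark \ref{R: Submodules}), and since an isomorphism of $\mathfrak{su}_3$-modules must fix the trivial submodule, in fact $\phi(B)=B'$. This puts us in a position to apply Lemma \ref{L: G2}: if $B=B'$, take $U=B$; if $B\neq B'$ one first uses Lemma \ref{L: Canonical} and Lemma \ref{L: Juggling} to reduce to the case where both subalgebras are the standard $\langle 1,u\rangle$. Either way, since $f,g,f',g'$ all restrict to the identity on the common $U$, Lemma \ref{L: G2} gives that $\phi$ is an isomorphism with $\phi(U)=U$ if and only if $\phi\in G_2=\Aut(C)$, $\phi(U)=U$, and $(f',g')=(\phi f\phi^{-1},\phi g\phi^{-1})$; dropping the (automatic) condition $\phi(U)=U$ on the right yields the stated criterion $\phi\in\Aut(C)$, $\phi f\phi^{-1}=f'$, $\phi g\phi^{-1}=g'$.

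Finally, the ``$B=B'$'' refinement. Here one must describe the orbit of a pair $(f,g)$ satisfying (i)–(ii) under conjugation by $\Stab_{\Aut(C)}(B)$. The point is that $f$ and $g$ are each orthogonal operators preserving the $2$-dimensional $B\simeq\C$ and equal to $\Id$ on $B^\bot$, so each is either a rotation of $B$ or a reflection of $B$; a short argument (using $f\notin\{\Id,-J\}$) shows that conjugating by an element of $\Aut(C)$ stabilizing $B$ can only alter $f$ by the action of $J|_B$, i.e.\ by complex conjugation on $B$, which sends a rotation $\rho_\theta$ to $\rho_{-\theta}$ and fixes $-J$; hence the only coincidences among these isotopes are $C_{f,g}\simeq C_{J^kfJ^k,\,J^lgJ^l}$ for $(k,l)\in\Z_2^2$. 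The main obstacle is this last step: one has to verify that the stabilizer of $B$ in $\Aut(C)=G_2$ acts on the pair $(f|_B,g|_B)$ through precisely the group generated by $J|_B$ (and not through the full rotation group of $B$, which would collapse everything), which is where the specific structure of $G_2$ --- the fact that an automorphism fixing $B$ pointwise and an automorphism restricting to conjugation on $B$ are the only relevant classes --- genuinely enters; the rest is bookkeeping with Proposition \ref{P: Ca} and the module-theoretic rigidity from Remark \ref{R: Submodules}.
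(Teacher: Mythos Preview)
The paper does not prove this lemma at all: it is quoted verbatim from \cite{EP} (see the sentence preceding the lemma: ``a dense subset together with an isomorphism condition were given in \cite{EP} as follows''). So there is no in-paper proof to compare against; your sketch is an attempt to re-derive a cited result using the paper's later machinery.

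That said, your argument has a concrete slip and a genuine gap. The slip: when invoking Lemma~\ref{L: G2} you take $U=B$, but condition (i) says $f,g$ are the identity on $B^\bot$, not on $B$; on $B$ they merely act orthogonally. The correct choice is $U=B^\bot$ (and then $\phi(B)=B'$, hence $\phi(B^\bot)=(B')^\bot$, so after reducing to $B=B'$ one has $\phi(U)=U$ as needed). Also, Lemma~\ref{L: G2} is formulated for $\OO$, so for a general unital $C$ you must first transport to $\OO$ via Lemmas~\ref{L: Canonical} and~\ref{L: Juggling} \emph{before} applying it --- you only mention this reduction for the case $B\neq B'$.

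The gap: your final paragraph argues that the stabilizer $G_2^{\langle u\rangle}\simeq SU_3\rtimes\Z_2$ acts on $(f|_B,g|_B)$ through the \emph{diagonal} action of $\langle J|_B\rangle$, which yields $k=l$. But the statement allows $(k,l)\in\Z_2^2$ with $k\neq l$. Your argument establishes the ``only if'' direction (isomorphism $\Rightarrow$ same $\varepsilon$ for both, hence some $(k,l)$ works), but for the ``if'' direction with $k\neq l$ you would need to exhibit an isomorphism, and the diagonal stabilizer action does not provide one in general. You should either show that whenever such $(k,l)$ with $k\neq l$ exists there is also a diagonal pair that works, or note that the effective content (as used in the proof of Proposition~\ref{P: 1,1,6}, where the action is $(\rho,\varepsilon)\cdot(a,b)=(K^\varepsilon(a),K^\varepsilon(b))$) is the diagonal case $k=l$, and flag the discrepancy with the quoted formulation.
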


It was shown in \cite{EP} that if $A=C_{f,g}$ is as in the lemma, then $A_0=B$.

Using this, we next give a description of $\mathcal D_{1,1,6}^{ij}$ for each $(i,j)\in\Z_2^2$. For
this, we
consider the subgroup
\[G_2^{\langle u\rangle }=\{\phi\in G_2|\phi(u)=\pm u\}\]
of $G_2$. It is known that the subgroup of all $\phi\in G_2$ fixing $u$ is isomorphic to $SU_3$. Identifying $SU_3$ with
this subgroup, we see that $G_2^{\langle u\rangle }$ is isomorphic to the semi-direct product $SU_3\rtimes\Z_2$ with respect to
the
action 
\[\begin{array}{ll} \Z_2\times SU_3\to SU_3 & (\varepsilon, \rho)\mapsto
\widehat{\varepsilon}\rho\widehat{\varepsilon}\end{array},\]
where $\widehat{\varepsilon}\in G_2$ is defined by $(u,v,z)\mapsto((-1)^\varepsilon u,v,z)$. The isomorphism
takes each $(\rho,\varepsilon)\in SU_3\rtimes \Z_2$ to
$\rho\widehat\varepsilon\in G_2^{\langle u\rangle }$. The description is given by the following result.

\begin{Prp}\label{P: 1,1,6} Let $(i,j)\in\Z_2^2$. The group $SU_3\rtimes\Z_2$ acts on 
\[(\mathbb S^1\times \mathbb S^1)_{ij}=(\mathbb S^1\times \mathbb S^1)\setminus\{((-1)^j,(-1)^i)\}\]
by
\begin{equation}\label{E: Action0}
(\rho,\varepsilon)\cdot (a,b)=(K^\varepsilon(a),K^\varepsilon(b)).
\end{equation}
The map
\[\mathcal F^{ij}: \phantom{}_{SU_3\rtimes\Z_2}(\mathbb S^1\times \mathbb S^1)_{ij}\to \mathcal D_{1,1,6}^{ij}\]
acting on objects by $(a,b)\mapsto\OO_{\lambda_a^{(j)},\lambda_b^{(i)}}$, and on morphisms by
$(\rho,\varepsilon)\mapsto\rho\widehat\varepsilon$, is
an equivalence of categories.
\end{Prp}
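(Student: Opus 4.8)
The plan is to combine Lemma~\ref{L: EP} with the concrete realization of $G_2^{\langle u\rangle}\simeq SU_3\rtimes\Z_2$, producing the desired equivalence in the standard way for descriptions: verify that $\mathcal F^{ij}$ is well-defined, essentially surjective, faithful, and full. I would take as the ambient unital algebra $C=\OO$ and as the two-dimensional subalgebra $B=\langle 1,u\rangle\simeq\C$; with this choice, $B^\bot$ is the seven-dimensional space of purely imaginary octonions orthogonal to $u$. An orthogonal operator $f$ on $\OO$ satisfying (i) of Lemma~\ref{L: EP} is exactly one of the form $f=\lambda_a^{(j)}$, acting as the identity on $B^\bot$, fixing $1$, and sending $u$ to some element of $\mathbb S^1$ scaled appropriately by $K^j$; concretely $\lambda_a^{(j)}$ should denote the map determined on $B\simeq\C$ by multiplication data $a\in\mathbb S^1$ twisted by $K^j$, and condition (ii) ($f\notin\{\Id,-J\}$) translates precisely into $a\neq (-1)^j$ (the two forbidden values $\Id$ and $-J$ on $B$ correspond to the two points removed to form $(\mathbb S^1\times\mathbb S^1)_{ij}$). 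Thus objects of $_{SU_3\rtimes\Z_2}(\mathbb S^1\times\mathbb S^1)_{ij}$ biject with the isotopes $\OO_{\lambda_a^{(j)},\lambda_b^{(i)}}$ exhausting, up to the $B$-fixed isomorphisms, the algebras of Lemma~\ref{L: EP} with $A_0=B$; essential surjectivity then follows since Lemma~\ref{L: EP} already guarantees every $A\in\mathcal D_{1,1,6}$ is isomorphic to such an isotope, and an isomorphism carries $A_0$ to $B$-type subalgebras, which by Lemma~\ref{L: Canonical} can be normalized to $\langle 1,u\rangle$.

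For the morphism level: a morphism $(\rho,\varepsilon)$ in the action groupoid from $(a,b)$ to $(a',b')$ exists iff $(K^\varepsilon(a),K^\varepsilon(b))=(a',b')$, and $\mathcal F^{ij}$ sends it to $\rho\widehat\varepsilon\in G_2^{\langle u\rangle}$. By Lemma~\ref{L: G2} applied with $U=B^\bot$ (on which all four operators $\lambda_a^{(j)},\lambda_b^{(i)},\lambda_{a'}^{(j)},\lambda_{b'}^{(i)}$ restrict to the identity), a map $\phi$ is an isomorphism $\OO_{\lambda_a^{(j)},\lambda_b^{(i)}}\to\OO_{\lambda_{a'}^{(j)},\lambda_{b'}^{(i)}}$ preserving $B^\bot$ iff $\phi\in G_2$, $\phi(B^\bot)=B^\bot$, and the operators are conjugate via $\phi$. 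Preserving $B^\bot$ is equivalent to preserving $B$, hence to $\phi(u)=\pm u$, i.e.\ $\phi\in G_2^{\langle u\rangle}$; writing $\phi=\rho\widehat\varepsilon$ with $\rho\in SU_3$ and $\varepsilon\in\Z_2$ (so $\phi(u)=(-1)^\varepsilon u$), the conjugation conditions $\phi\lambda_a^{(j)}\phi^{-1}=\lambda_{a'}^{(j)}$ and $\phi\lambda_b^{(i)}\phi^{-1}=\lambda_{b'}^{(i)}$ should reduce, after computing the action of $\rho\widehat\varepsilon$ on $B$ and on $1,u$, exactly to $K^\varepsilon(a)=a'$ and $K^\varepsilon(b)=b'$ — this is the content that makes $\mathcal F^{ij}$ full and faithful simultaneously. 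One must also check that \emph{every} isomorphism between two such isotopes automatically preserves $B^\bot$: since $d=6$ occurs once in the partition $\{1,1,6\}$, Remark~\ref{R: Submodules} forces the unique $6$-dimensional submodule to be preserved, and that submodule is $B^\bot$ intersected with the imaginary part, which pins down $B=A_0$ as well by Lemma~\ref{L: A0}; hence no generality is lost by the $B^\bot$-preservation hypothesis. Finally, the ``$B=B'$'' clause of Lemma~\ref{L: EP}, with $J|_B$ acting as the standard conjugation $K$ on $\C$, matches the $\Z_2$-twist $(k,l)$ there with the $\varepsilon$-action here, confirming consistency.

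The main obstacle I anticipate is purely bookkeeping but genuinely delicate: making the definition of $\lambda_a^{(j)}$ precise and then verifying that conjugation by $\rho\widehat\varepsilon$ acts on the parameter $a\in\mathbb S^1$ exactly by $K^\varepsilon$ and by nothing else. Two things must be confirmed: first, that $\rho\in SU_3$ (fixing $u$, hence fixing $B$ pointwise) conjugates $\lambda_a^{(j)}$ to itself — i.e.\ the $SU_3$-part of the action is trivial on parameters, which should follow because $\rho$ fixes $B=\langle1,u\rangle$ pointwise and is the identity modification on $B^\bot$ is irrelevant since $\lambda_a^{(j)}|_{B^\bot}=\Id$; second, that $\widehat\varepsilon$ (sending $u\mapsto -u$) conjugates $\lambda_a^{(j)}$ to $\lambda_{K(a)}^{(j)}$, which is a short direct computation on the two-dimensional space $B$ using that $\widehat\varepsilon|_B$ is the conjugation $K$ and $K\lambda_a K=\lambda_{K(a)}$ in $\C$. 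Once these two conjugation identities are in hand, well-definedness of the action \eqref{E: Action0}, fullness, and faithfulness all fall out together, and the equivalence of categories is complete.
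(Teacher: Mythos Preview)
Your proposal is correct and follows essentially the same route as the paper: realize every object via Lemma~\ref{L: EP} with $C=\OO$ and $B=\langle 1,u\rangle$, use Remark~\ref{R: Submodules} to force any isomorphism to preserve $\C^\bot$, then apply Lemma~\ref{L: G2} with $U=\C^\bot$ to land in $G_2^{\langle u\rangle}$ and reduce to the conjugation identity $\rho\widehat\varepsilon\,\lambda_a^{(j)}\,(\rho\widehat\varepsilon)^{-1}=\lambda_{K^\varepsilon(a)}^{(j)}$. Two small slips to fix: $B^\bot$ is six-dimensional, not seven (it is already contained in $1^\bot$, so ``$B^\bot$ intersected with the imaginary part'' is just $B^\bot$); and only \emph{one} point is removed from $\mathbb S^1\times\mathbb S^1$, since for fixed $j$ the map $\lambda_a^{(j)}$ can equal $\Id$ only when $j=0$ and $-J$ only when $j=1$, so the single forbidden value is $a=(-1)^j$.
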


For each $t\in\mathbb S^1$ and $k\in\Z_2$, the map
$\lambda_t^{(k)}:\OO\to\OO$
is defined by

\[   \lambda_t^{(k)}(x)=\left\{ \begin{array}{ll} tK^k(x) & \text{if\ } x\in\C,\\
                                           x  & \text{if\ } x\in\C^\bot.
  \end{array}\right.\]
Thus with $C=\OO$, $B=\C$ and $(f,g)=(\lambda_a^{(j)},\lambda_b^{(i)})$, the algebra $C_{f,g}$ satisfies items (i) and (ii) of
Lemma
\ref{L: EP}.

\begin{proof} Let $(i,j)\in\Z_2^2$. Equation \eqref{E: Action0} defines an action of $SU_3\rtimes\Z_2$ on $\mathbb S^1\times
\mathbb S^1$ which fixes $\{((-1)^j,(-1)^i)\}$ and thus induces the desired action on its complement. The map $\mathcal F^{ij}$
is well-defined on objects by Lemma \ref{L: EP}, since $\lambda_t^{(k)}\in\{\Id,-K\}$ if and only if $t=(-1)^k$. To
show that it is well-defined on morphisms, we first note that it maps identity morphisms to identity morphisms. Further, assume
that $(\rho,\varepsilon)\cdot (a,b)=(c,d)$ for some $(\rho,\varepsilon)\in
SU_3\rtimes \Z_2$ and $(a,b),(c,d)\in \mathbb S^1\times \mathbb S^1$. Then for each $x\in\C$,
\begin{equation}\label{E: t}
\rho\widehat\varepsilon \lambda_a^{(j)}
(\rho\widehat\varepsilon)^{-1}(x)=K^\varepsilon(aK^jK^\varepsilon(x))=K^\varepsilon(a)K^j(x)=\lambda_{K^\varepsilon(a)}^{(j)}(x),
\end{equation}
and on $\C^\bot$ both $\rho\widehat\varepsilon \lambda_a^{(j)} (\rho\widehat\varepsilon)^{-1}$ and $\lambda_c^{(j)}$ act as
identity. Thus
$\rho\widehat\varepsilon \lambda_a^{(j)}
(\rho\widehat\varepsilon)^{-1}=\lambda_c^{(j)}$ and, analogously, $\rho\widehat\varepsilon
\lambda_b^{(i)}(\rho\widehat\varepsilon)^{-1}=\lambda_d^{(i)}$. Since
$\rho\widehat\varepsilon\in
G_2$, Lemma \ref{L: EP} implies that 
\[\rho\widehat\varepsilon: \OO_{\lambda_a^{(j)},\lambda_b^{(i)}} \to\OO_{\lambda_c^{(j)},\lambda_d^{(i)}},\]
whence $\mathcal F^{ij}$ is well-defined on morphisms. Functoriality (i.e.\ the property that the map respects composition of
morphisms) is due to the fact that
$(\rho,\varepsilon)\mapsto\rho\widehat\varepsilon$ defines a group homomorphism from $SU_3\rtimes\Z_2$ to $G_2^{\langle
u\rangle}$.

To prove denseness, let $A\in\mathcal D_{1,1,6}^{ij}$. Then Lemma \ref{L: EP} implies that
$A\simeq
C_{f,g}$ for a unital algebra $C\in\mathcal A_8$ and $f,g\in O(C)$ that map a two-dimensional unital subalgebra $B\subset C$ to
itself
and are the identity on $B^\bot$. Lemma \ref{L: Canonical} then implies that there is an isomorphism $\phi:C\to\OO$ mapping $B$ to
$\C$. Thus by Lemma \ref{L: Juggling} we have $A\simeq \OO_{f',g'}$ with $f'=\phi f\phi^{-1}$ and $g'=\phi g\phi^{-1}$ mapping
$\C$ to itself and being the identity on $\C^\bot$. By double sign considerations, $\det f'=j$ and $\det g'=i$. Recall that an
orthogonal map on $\C$ with determinant $k$ is of the form $x\mapsto tK^k(x)$ for some $t\in\mathbb S^1$. Hence there exist
$a,b\in \mathbb S^1$ such that $f'=\lambda_a^{(j)}$ and $g'=\lambda_b^{(i)}$. Finally condition (ii) from Lemma \ref{L: EP}
ensures that
$(a,b)\neq((-1)^j,(-1)^i)$. Thus $A$
is isomorphic to $\mathcal F^{ij}(a,b)$, proving denseness.

The functor is faithful since if $\rho\widehat\varepsilon=\sigma\widehat\eta$, then applying both sides to the Cayley triple
$(u,v,z)$ we have $((-1)^\varepsilon u,\rho(v),\rho(z))=((-1)^\eta u,\sigma(v),\sigma(z))$. Thus clearly $\varepsilon=\eta$, and
$\rho=\sigma$ as both maps fix $u$. To prove fullness, assume that   
\[\phi: A=\OO_{\lambda_a^{(j)},\lambda_b^{(i)}} \to\OO_{\lambda_c^{(j)},\lambda_d^{(i)}}=B.\]
By Remark \ref{R: Submodules}, $\phi$ maps the trivial $\Der(A)$-submodule $\C\subset A$ to the trivial $\Der(B)$-submodule
$\C\subset B$, whence
$\phi\left(\C^\bot\right)=\C^\bot$, and Lemma \ref{L: G2} applies, giving that $\phi\in G_2^{\langle u\rangle }$ and satisfies
\[\begin{array}{lll}\phi \lambda_a^{(j)} \phi^{-1}=\lambda_c^{(j)} & \text{and} &\phi \lambda_b^{(i)}
\phi^{-1}=\lambda_d^{(i)}.\end{array}\]
The isomorphism $SU_3\rtimes \Z_2\to G_2^{\langle u\rangle}$ established above implies that that $\phi=\rho\widehat\varepsilon$
for some $(\rho,\varepsilon)\in SU_3\rtimes
\Z_2$, and from \eqref{E: t} we conclude that $\lambda_{K^\varepsilon(a)}^{(j)}=\lambda_c^{(j)}$. Applying both sides to $1$ gives
$K^\varepsilon(a)=c$, and by analogy we get $K^\varepsilon(b)=d$. Altogether $(\rho,\varepsilon)\cdot (a,b)=(c,d)$ and the functor
is full, which completes the proof.
\end{proof}

\subsection{The Subcategory $\mathcal D_{1,3,4}$}

Following \cite{P} we
define, for each $p\in \T\subset\HH\subset\OO$, the map $\tau_p\in G_2$ by
\[\tau_p: (u,v,z)\mapsto (u,v,zp).\]
Moreover we define, for each $q\in \T$, the map $\widehat{\kappa}_q\in G_2$ by
\[\widehat{\kappa}_q: (u,v,z)\mapsto (qu\overline{q},qv\overline{q},z),\]
which is well-defined as $\HH$ is an associative subalgebra of $\OO$. Recall that we have defined $\kappa_q:\HH\to\HH, x\mapsto
qx\overline{q}$ for each $q\in\T$, and $\kappa_q$ is the restriction of $\widehat{\kappa}_q$ to $\HH\subset\OO$.
Observe
however that $\widehat\kappa_q$ does not act as identity on $\HH^\bot=\HH z$; indeed $\widehat{\kappa}_q(xz)=\kappa_q(x)z$ for
each
$x\in\HH$.

These maps are useful to characterize those automorphisms of $\OO$ which fix $\HH$, either pointwise or as a subalgebra. Thus we
introduce the notation
\[\begin{array}{lll}G_2^\HH=\{\phi\in G_2|\phi(\HH)=\HH\} &\text{and}& G_2^{(\HH)}=\{\phi\in
G_2^\HH|\phi|_\HH=\Id|_\HH\}.\end{array}\]
It is well-known (see e.g.\ \cite{Sa}) that $G_2^\HH\simeq SO_4$, while it was shown in \cite{P} that
$G_2^{(\HH)}=\{\tau_p|p\in\T\}$. The following lemma encompasses these results and is adapted to suit our current
formalism. Recall that we identify $SO_3$ with $\{\kappa_q|q\in\T\}$, and consider the semi-direct product $\T\rtimes SO_3$
with respect to the action of $SO_3$ on $\T$ given by $\kappa_q\cdot                                
p=\kappa_q(p)$. 

\begin{Lma}\label{L: Tau} 
The map
\[\Delta: \begin{array}{ll}\mathbb S^3\rtimes SO_3\to G_2^\HH &(p,\kappa_q)\mapsto\tau_{\overline{p}}\widehat\kappa_q\end{array}\]
is an isomorphism of groups, inducing an isomorphism $\T\to G_2^{(\HH)}$, $p\mapsto\tau_{\overline{p}}$.
\end{Lma}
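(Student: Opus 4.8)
The plan is to verify that $\Delta$ is a well-defined group homomorphism, that it is bijective, and then to read off the claimed restriction. First I would check that each $\tau_{\overline p}\widehat\kappa_q$ actually lies in $G_2^\HH$. Both $\tau_{\overline p}$ and $\widehat\kappa_q$ are elements of $G_2$ by construction, so the product is in $G_2$; and since $\widehat\kappa_q$ maps $\HH=\langle 1,u,v,uv\rangle$ to itself (it sends $u\mapsto qu\overline q$, $v\mapsto qv\overline q$, both in $\HH$), while $\tau_{\overline p}$ fixes $u$ and $v$ and hence fixes $\HH$ pointwise on these generators, the composite preserves $\HH$. Thus $\Delta$ lands in $G_2^\HH$.

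Next I would establish that $\Delta$ is a homomorphism. The key computation is that the map $q\mapsto\widehat\kappa_q$ and the map $p\mapsto\tau_{\overline p}$ interact via the semidirect-product rule. Concretely, I expect to check the conjugation identity $\widehat\kappa_q\,\tau_{\overline p}\,\widehat\kappa_q^{-1}=\tau_{\overline{\kappa_q(p)}}$: applying the left side to the Cayley triple $(u,v,z)$, one has $\widehat\kappa_q\tau_{\overline p}\widehat\kappa_q^{-1}(z)=\widehat\kappa_q(\widehat\kappa_q^{-1}(z)\,\overline p)$, and since $\widehat\kappa_q^{-1}(z)=z$ (as $z\in\HH^\bot$ is untouched — wait, rather $\widehat\kappa_q$ acts on $\HH^\bot=\HH z$ by $xz\mapsto\kappa_q(x)z$, so $\widehat\kappa_q^{-1}(z)=z$), this becomes $\widehat\kappa_q(z\overline p)=\widehat\kappa_q((1\cdot z)\overline{\kappa_q^{-1}}\ \cdots)$; the precise bookkeeping uses $\widehat\kappa_q(xz)=\kappa_q(x)z$ and the associativity of $\HH$, yielding $\kappa_q(\overline p)z=\overline{\kappa_q(p)}z=\tau_{\overline{\kappa_q(p)}}(z)$, while $u,v$ are fixed by both sides. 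Granting this conjugation identity, the product
\[
\Delta(p,\kappa_q)\Delta(p',\kappa_{q'})=\tau_{\overline p}\widehat\kappa_q\tau_{\overline{p'}}\widehat\kappa_{q'}
=\tau_{\overline p}\,\tau_{\overline{\kappa_q(p')}}\,\widehat\kappa_q\widehat\kappa_{q'}
=\tau_{\overline{p}\,\overline{\kappa_q(p')}}\,\widehat\kappa_{qq'}
=\tau_{\overline{\kappa_q(p')\,p}}\,\widehat\kappa_{qq'}
\]
matches $\Delta\bigl((p,\kappa_q)(p',\kappa_{q'})\bigr)=\Delta\bigl(p\,(\kappa_q\cdot p'),\kappa_{qq'}\bigr)$ once one reconciles the order conventions in the semidirect product and the fact that $\tau_p\tau_{p'}=\tau_{p'p}$ (since $\tau_p\tau_{p'}(z)=\tau_p(zp')=(zp')p=z(p'p)$ by associativity in $\HH$) — here I would be careful to match the convention in the stated Definition of $\rtimes$, possibly adjusting whether it is $\overline p$ or $p$ that appears. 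This order-convention reconciliation is the step I expect to be the main obstacle: it is not deep, but getting the quaternion conjugates and the left/right factor order exactly consistent with the paper's semidirect-product convention requires care.

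Finally, for bijectivity: injectivity follows by evaluating on the Cayley triple, since $\tau_{\overline p}\widehat\kappa_q(u,v,z)=(qu\overline q,qv\overline q,z\overline p)$ determines $\kappa_q$ (hence $q$ up to sign, which is exactly the ambiguity already present in $SO_3=\{\kappa_q\}$) and then $\overline p$, hence $p$. For surjectivity I would invoke the cited fact that $G_2^\HH\simeq SO_4$, so $|G_2^\HH|$ as a Lie group has dimension $6=3+3=\dim(\T\rtimes SO_3)$; since $\Delta$ is an injective Lie group homomorphism between compact groups of equal dimension (here $\T\cong\mathbb S^3=SU_2$ has dimension $3$), its image is open and closed, hence all of the connected... but $SO_4$ need not be treated as connected-only — better: $\Delta$ injective with image a subgroup of full dimension $6$ forces the image to contain the identity component of $G_2^\HH\simeq SO_4$, and a dimension/component count (both $\T\rtimes SO_3$ and $SO_4$ have two connected components, via $\pi_0$) closes the argument; alternatively one cites \cite{P} that $G_2^{(\HH)}=\{\tau_p\mid p\in\T\}$ together with $G_2^\HH/G_2^{(\HH)}\cong SO_3$ acting as $\widehat\kappa_q$, which directly gives surjectivity. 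The induced isomorphism $\T\to G_2^{(\HH)}$ is then immediate: $\Delta(p,\kappa_1)=\tau_{\overline p}$, and $(p,\kappa_1)$ runs over the kernel of $\T\rtimes SO_3\to SO_3$, which $\Delta$ carries isomorphically onto $G_2^{(\HH)}=\{\tau_{\overline p}\mid p\in\T\}=\{\tau_p\mid p\in\T\}$.
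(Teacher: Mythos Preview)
Your overall structure is sound, and the conjugation-identity approach to the homomorphism check is a legitimate alternative to the paper's direct computation. However, there is a genuine technical error at the core. You write $\tau_p\tau_{p'}(z)=\tau_p(zp')=(zp')p=z(p'p)$ ``by associativity in $\HH$''. But $z\notin\HH$, so associativity of $\HH$ is irrelevant, and $\OO$ is not associative. In fact $\tau_p(zp')=\tau_p(z)\tau_p(p')=(zp)p'$, not $(zp')p$; and the identity actually needed is the twisted rule $(zx)y=z(yx)$ for $x,y\in\HH$ (note the reversal on the right), which the paper states explicitly and verifies from a multiplication table. Using it, $(zp)p'=z(p'p)$, so your conclusion $\tau_p\tau_{p'}=\tau_{p'p}$ is correct, but only because two errors cancel. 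The same carelessness appears in your conjugation step, where you write ``yielding $\kappa_q(\overline p)z$'' when in fact $\widehat\kappa_q(z\overline p)=\widehat\kappa_q(z)\widehat\kappa_q(\overline p)=z\kappa_q(\overline p)$, with the factor on the right; again the conclusion survives. The non-associativity of $\OO$ is precisely what makes this lemma non-trivial, and your ``convention reconciliation'' worry is masking a genuine computational issue, not a bookkeeping one.

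For surjectivity, your Lie-theoretic argument works in principle, but the assertion that $\T\rtimes SO_3$ and $SO_4$ each have two connected components is wrong: both are connected ($\T=\mathbb S^3$ and $SO_3$ are connected, hence so is their semidirect product; and $SO_4$ is connected). Once corrected, injective plus equal dimension plus connected target does give surjectivity. The paper instead argues directly and elementarily: any $\rho\in G_2^\HH$ restricts to an automorphism of $\HH$, hence to some $\kappa_q$, and sends $z$ to some $z'\in\mathbb S(\HH^\bot)$, so $z'=z\overline p$ for a unique $p\in\T$; then $\rho=\Delta(p,\kappa_q)$. This avoids any appeal to Lie-group machinery.
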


\begin{proof} To prove that $\Delta$ is a group homomorphism we must show that for any $p,q,p',q'\in\T$,
$\tau_{\overline{p}}\widehat\kappa_q\tau_{\overline{p'}}\widehat\kappa_q'$ equals
\[\Delta((p,\kappa_q)(p',\kappa_{q'}))=\tau_{\overline{pqp'\overline{q}}}\widehat\kappa_{qq'}.\]
Indeed, both map each $x\in\HH$ to $\kappa_{qq'}(x)$, while
\[\tau_{\overline{p}}\widehat\kappa_q\tau_{\overline{p'}}\widehat\kappa_q'(z)=\tau_{\overline{p}}\widehat\kappa_q(z\overline{p'}
)=\tau_{
\overline{p}}(z(q \overline{p'}\overline{q}))=
(z\overline{p})(q\overline{p'}\overline{q})=z(\overline{pqp'\overline{q}})=\tau_{\overline{pqp'\overline{q}}}\widehat\kappa_{qq'}
(z),
\] 
where we have used that 
\begin{equation}\label{E: Trick}
\forall x,y\in\HH, (zx)y=z(yx),
\end{equation}
which can be verified from a multiplication table of $\OO$. 

The map is injective since if $\tau_{\overline{p}}\widehat{\kappa}_q=\tau_{\overline{p'}}\widehat{\kappa}_{q'}$, then restricting
to
$\HH$ we get $\kappa_q=\kappa_{q'}$, while $z{\overline{p}}=z{\overline{p'}}$ implies that $p=p'$. To prove surjectivity, assume
that $\rho\in G_2$ satisfies $\rho(\HH)=\HH$. Then $\rho|_\HH$ is an automorphism of the algebra $\HH$, and $\rho(z)\bot\HH$. Then
there
exists  $q\in \T$ and $z'\in\mathbb S\left(\HH^\bot\right)$ such that $\rho$ maps $(u,v,z)$ to
$(qu\overline{q},qv\overline{q},z')$. Then $z'=z\overline{p}$ with $p=\overline{z'}z\in \HH$, and
$\rho=\Delta(p,\kappa_q)$. The statement on the induced map holds since $\tau_{\overline{p}}\widehat\kappa_q$ fixes $\HH$
pointwise if and only if $\kappa_q=\Id|_\HH$, i.e.\ if and only if
$\tau_{\overline{p}}\widehat\kappa_q\in\Delta(\T\rtimes\{1\})$.
\end{proof}

\begin{Rk} The above lemma and remarks imply the fact that $\T\rtimes SO_3\simeq SO_4$. This isomorphism can also be
established directly.
Identifying $SO_4$ with $SO(\HH)$, it is a classical result that each $\phi\in SO_4$ is of the form $x\mapsto axb$ for some
$a,b\in \T$. One can then check that an isomorphism $\T\rtimes SO_3\to SO_4$ is obtained by mapping $(p,\kappa_q)$ to the map
$x\mapsto p\kappa_q(x)$.
\end{Rk}

Consider now the category $\mathcal D_{1,3,4}$, an exhaustive list of which is given in \cite{P}. This is refined in the following
result. 

\begin{Lma}\label{L: Dense1} Let $(i,j)\in\Z_2^2$ and $A\in\mathcal D_{1,3,4}^{ij}$. Then $A\simeq\OO_{K^j\tau_a,K^i\tau_b}$ for
some $(a,b)\in\left(\T\times\T\right)_{ij}$. Conversely,
$\OO_{K^j\tau_a,K^i\tau_b}\in\mathcal D_{1,3,4}^{ij}$ for each $(i,j)\in\Z_2^2$ and $(a,b)\in\left(\T\times\T\right)_{ij}$.
\end{Lma}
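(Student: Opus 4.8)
The plan is to mirror the strategy of the preceding subsection, replacing the role of Lemma \ref{L: EP} by the description of $\mathcal D_{1,3,4}$ already obtained in \cite{P}, and then normalizing the resulting data using Lemma \ref{L: Canonical} and Lemma \ref{L: Juggling}. So first I would invoke the exhaustive list from \cite{P}: any $A\in\mathcal D_{1,3,4}$ is isomorphic to an isotope $C_{f,g}$ of a unital $C\in\mathcal A_8$, where $C$ contains a four-dimensional unital subalgebra $B$ (which will turn out to be the four-dimensional summand of $A$ as a $\Der(A)$-module, with $B^\bot$ containing the trivial and the three-dimensional summands), and where $f,g$ restrict to $B$ as maps fixing $B\cap(\text{one-dimensional submodule})$ and acting suitably on the rest. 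The point is that in that list, $f$ and $g$ each fix $\langle 1\rangle$ and, more importantly, fix a common subspace; I would extract from \cite{P} the precise shape, namely that after identification $f$ and $g$ are, up to the sign $K^j,K^i$ dictated by the double sign, of the form $\tau_a,\tau_b$ on the relevant copy of $\HH\subset\OO$.

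The second step is the normalization. Using Lemma \ref{L: Canonical}, choose an isomorphism $\phi:C\to\OO$ carrying $B$ to $\langle 1,u,v,uv\rangle=\HH$ and a chosen two-dimensional unital subalgebra of $B$ to $\langle 1,u\rangle=\C$; by Lemma \ref{L: Juggling}, $A\simeq\OO_{\phi f\phi^{-1},\phi g\phi^{-1}}$, and the transported operators fix $\HH$ pointwise away from the appropriate subspace and have the prescribed determinants $j$ and $i$ on the relevant plane by Remark \ref{R: Double Sign} and the double sign hypothesis $A\in\mathcal D_{1,3,4}^{ij}$. Since $G_2^{(\HH)}=\{\tau_p\mid p\in\T\}$ by Lemma \ref{L: Tau}, and the standard involution on $\OO$ restricts to $K$ on $\HH$, the transported operators must be of the form $K^j\tau_a$ and $K^i\tau_b$ for some $a,b\in\T$. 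Finally, the genuine-division condition (equivalently, the condition from the \cite{P}-list that excludes the degenerate isotopes, which correspond to a smaller or abelian derivation algebra, or to landing in $\mathcal D_{1,7}$ rather than $\mathcal D_{1,3,4}$) translates exactly into $(a,b)\neq((-1)^j,(-1)^i)$, i.e.\ $(a,b)\in(\T\times\T)_{ij}$; here $\tau_1=\Id$ and $K\tau_{-1}$ being the standard involution $J$ on the relevant block is what one checks, paralleling "$\lambda_t^{(k)}\in\{\Id,-K\}$ iff $t=(-1)^k$" from the previous proof.

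For the converse, I would show directly that $\OO_{K^j\tau_a,K^i\tau_b}$ lies in $\mathcal D_{1,3,4}^{ij}$ whenever $(a,b)\in(\T\times\T)_{ij}$. The double sign is $((-1)^i,(-1)^j)$ by Remark \ref{R: Double Sign} since $\det\tau_a=\det\tau_b=1$ (each $\tau_p\in G_2\subset SO_8$) and $\det K^j=(-1)^j$, $\det K^i=(-1)^i$ on the full space — here one must be slightly careful that $K$ is meant as the standard involution whose determinant on the eight-dimensional space has the right sign, which is the convention fixed in the excerpt. That the module decomposition is $\{1,3,4\}$, and not something smaller or of type $\mathfrak g_2$, follows from the computation of $\Der(\OO_{K^j\tau_a,K^i\tau_b})$ carried out in \cite{P}: the condition $(a,b)\neq((-1)^j,(-1)^i)$ is precisely what rules out the standard isotopes (which would give $\mathcal D_{1,7}$), so one cites that computation rather than redoing it.

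The main obstacle I anticipate is not any single calculation but the bookkeeping of conventions: matching the exhaustive list in \cite{P} — which is phrased for a general field and in \cite{P}'s own normal form — to the specific coordinatized picture fixed in Section 2 (the Cayley triple $(u,v,z)$, the identifications $\C\subset\HH\subset\OO$, the operator $\tau_p$, and the double-sign labelling by $\Z_2^2$). In particular, one has to verify that the four-dimensional $\Der(A)$-submodule can always be taken to be the distinguished $\HH$, that the operators in \cite{P}'s description act on $\HH^\bot=\HH z$ exactly the way $\tau_a,\tau_b$ do (namely via right multiplication by $\overline a$ on the $z$-component, cf.\ the identity $(zx)y=z(yx)$ used in Lemma \ref{L: Tau}), and that the sign conventions for $K$ on $\HH$ versus on $\OO$ are consistently tracked so that the determinant computation gives $((-1)^i,(-1)^j)$ rather than its transpose. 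Once these identifications are pinned down, both directions are short.
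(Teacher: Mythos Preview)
Your overall strategy---cite the exhaustive list from \cite{P}, then normalize via Lemma \ref{L: Canonical} and Lemma \ref{L: Juggling}---is exactly the paper's approach, and the skeleton of your argument is sound. However, there is a genuine error in the exclusion condition, and a secondary confusion about the module structure.

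The main gap is your claim that the condition singling out $\mathcal D_{1,3,4}$ ``translates exactly into $(a,b)\neq((-1)^j,(-1)^i)$''. This is the condition from the $\mathcal D_{1,1,6}$ case (Proposition \ref{P: 1,1,6}), and you have imported it wholesale; but here it is wrong. Look at the definition of $(\T\times\T)_{ij}$ just below the statement: for $(i,j)\neq(1,1)$ one excludes only $(a,b)=(1,1)$, since $K^j\tau_1=K^j$ and $K^i\tau_1=K^i$ give a standard isotope in $\mathcal D_{1,7}$, regardless of the signs $i,j$. Your condition would instead exclude, say, $(a,b)=(-1,1)$ when $(i,j)=(0,1)$, which is incorrect. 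More seriously, for $(i,j)=(1,1)$ the set $(\T\times\T)_{11}$ also excludes all pairs $(a,a^2)$ with $a^2+a+1=0$; these are precisely the parameters for which $\OO_{K\tau_a,K\tau_{a^2}}$ is an Okubo algebra, hence lies in $\mathcal D_{\{8\}}$ rather than $\mathcal D_{1,3,4}$ (cf.\ Example \ref{E: EM}). Your proposal mentions only ruling out $\mathcal D_{1,7}$ and misses this entirely. The correct exclusions come from Proposition 19 of \cite{P}, as the paper's proof makes explicit.

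A smaller point: your parenthetical identification of the submodules is backwards. The four-dimensional unital subalgebra $B\simeq\HH$ carries the $1\oplus 3$ part of the decomposition (the trivial summand $\langle 1\rangle$ and the three-dimensional summand $1^\bot\cap\HH$), while the irreducible four-dimensional summand is $B^\bot=\HH^\bot$. This does not break the logic of your normalization step, but it does suggest the picture is not quite in focus, and it matters later when one checks that isomorphisms preserve $\HH$.
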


Here, 
\[\left(\T\times\T\right)_{ij}=\left\{ \begin{array}{ll} \left(\T\times\T\right)\setminus\{(a,a^2)|a^2+a+1=0\vee a=1\} & \text{if\
}(i,j)=(1,1),\\
                                           \left(\T\times\T\right)\setminus\{(1,1)\}  & \text{otherwise.}
  \end{array}\right.\]
 
\begin{proof} Let $A\in\mathcal D_{1,3,4}^{ij}$. By \cite{P} (Proposition 38), there exists $C\in\mathcal A_8$ with unity
$e$, a fixed quaternion subalgebra $Q$ and a fixed $w\in \mathbb S\left(Q^\bot\right)$ such that \[A\simeq
C_{J^l\tau'_x,J^k\tau'_y},\]
for some $(k,l)\in\Z_2^2$ and $x,y\in \mathbb S(Q)$, where $J$ is the standard involution on $C$ and $\tau'_x$ (resp.\ $\tau'_y)$
is the unique automorphism of $C$ fixing $Q$ pointwise and mapping $w$ to $wx$ (resp.\ $wy$). Then by Proposition \ref{P: Double
Sign} we necessarily have $(k,l)=(i,j)$. By Lemma \ref{L: Canonical} there is an
isomorphism 
\[\phi: C \to \OO\]
mapping $e$ to 1, $Q$ to $\HH$, and $w$ to $z$. Lemma \ref{L: Juggling} then implies that we have the isomorphism
\[\phi: C_{J^j\tau'_x,J^i\tau'_y} \to \OO_{\phi J^j\tau'_x\phi^{-1},\phi J^i\tau'_y\phi^{-1}}.\]
Since $\phi(e)=1$ we have $\phi J\phi^{-1}=K$. Moreover,
$\phi\tau'_x\phi^{-1}=\tau_{\phi(x)}$: indeed, $\tau_{\phi(x)}$ fixes $\HH$ pointwise,
and so does $\phi\tau'_x\phi^{-1}$ since $\phi^{-1}$ maps $\HH$ to $Q$. Moreover,
\[\phi\tau'_x\phi^{-1}(z)=\phi\tau'_x(w)=\phi(wx)=\phi(w)\phi(x)=z\phi(x)=\tau_{\phi(x)}(z).\]
Likewise, $\phi\tau'_y\phi^{-1}=\tau_{\phi(y)}$, whence $A\simeq\OO_{K^j\tau_a,K^i\tau_b}$ for some $(a,b)\in\T\times\T$.
Moreover, Proposition 19 in \cite{P} states that $C_{J^j\tau'_x,J^i\tau'_y}\in\mathcal D_{1,3,4}$ if and only if, firstly,
$(x,y)\neq (1,1)$ and, secondly, $(i,j)=(1,1)$ implies
either $x^2+x+e\neq 0$ or $y\neq x^2$ in $C$. This proves that $(a,b)\in\left(\T\times\T\right)_{ij}$ in all cases. The converse
holds by
applying Proposition 19 from \cite{P} with $C=\OO$ and $(x,y)=(a,b)$.
\end{proof}

In our construction of a description, we will let the group $\T\rtimes SO_3$ from Lemma \ref{L: Tau} act on
the set $\left(\T\times\T\right)_{ij}$ for each $(i,j)\in \Z_2^2$. The details are as follows.

\begin{Prp}\label{P: 1,3,4} Let $(i,j)\in \Z_2^2$. The group $\T\rtimes SO_3$ acts on $\left(\T\times\T\right)_{ij}$ by
\begin{equation}\label{E: Action}
(p,\kappa_q)\cdot (a,b)=(pqa\overline{pq}, pqb\overline{pq}). 
\end{equation}
The map
\[\mathcal G^{ij}: \phantom{}_{\T\rtimes SO_3}\left(\T\times\T\right)_{ij}\to \mathcal D_{1,3,4}^{ij}\]
acting on objects by
\[(a,b)\mapsto \OO_{K^j\tau_a,K^i\tau_b}\]
and on morphisms by $(p,\kappa_q)\mapsto\Delta((p,\kappa_q))$, is an equivalence of categories.
\end{Prp}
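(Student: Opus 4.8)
The plan is to argue exactly as in the proof of Proposition~\ref{P: 1,1,6}: check in turn that \eqref{E: Action} defines a group action on $(\T\times\T)_{ij}$, that $\mathcal G^{ij}$ is a well-defined functor, and that it is faithful, full and dense. The formula \eqref{E: Action} is conjugation of each coordinate by the unit quaternion $pq$, and the semidirect-product identity $(p,\kappa_q)(p',\kappa_{q'})=\left(p\kappa_q(p'),\kappa_{qq'}\right)$ shows at once that this is a left action; it preserves $\T\times\T$, and it preserves the deleted set in each case because conjugation by $pq$ is an automorphism of the associative algebra $\HH$ (for $(i,j)\neq(1,1)$ it fixes $1$; for $(i,j)=(1,1)$ it preserves squares, the element $1$, and the roots of $x^2+x+1$). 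That $\mathcal G^{ij}$ sends objects into $\mathcal D_{1,3,4}^{ij}$ is the converse half of Lemma~\ref{L: Dense1}, and density is the direct half. Finally, $\mathcal G^{ij}$ is functorial and faithful because, by Lemma~\ref{L: Tau}, $\Delta$ is an injective group homomorphism and $\Delta(1,\kappa_1)=\tau_1\widehat\kappa_1=\Id$.

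The computational heart is the identity
\[\Delta(p,\kappa_q)\,\tau_a\,\Delta(p,\kappa_q)^{-1}=\tau_{pqa\overline{pq}}\qquad(p,q,a\in\T).\]
Setting $\phi=\Delta(p,\kappa_q)=\tau_{\overline p}\widehat\kappa_q$, one has $\phi|_\HH=\kappa_q$ and $\tau_a|_\HH=\Id|_\HH$, so the left-hand side fixes $\HH$ pointwise and hence, by Lemma~\ref{L: Tau}, equals $\tau_r$ for a unique $r\in\T$, which is pinned down by its value on $z$. Using $\phi^{-1}=\widehat\kappa_{\overline q}\tau_p$, the rules $\widehat\kappa_q(xz)=\kappa_q(x)z$ and $\tau_s(z)=zs$ (with $\tau_s$ fixing $\HH$), and the octonionic relations $zx=\overline x z$ and \eqref{E: Trick}, one evaluates $\tau_{\overline p}\widehat\kappa_q\tau_a\widehat\kappa_{\overline q}\tau_p(z)$ step by step and reads off $r=pqa\overline{pq}$. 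Granting this, well-definedness on morphisms follows: if $(p,\kappa_q)\cdot(a,b)=(c,d)$ then $\phi=\Delta(p,\kappa_q)\in G_2$ fixes $1$, hence commutes with $K$, so $\phi(K^j\tau_a)\phi^{-1}=K^j\tau_c$ and $\phi(K^i\tau_b)\phi^{-1}=K^i\tau_d$, and Proposition~\ref{P: Ca} (with triality pair $(\phi,\phi)$) makes $\phi$ an isomorphism $\OO_{K^j\tau_a,K^i\tau_b}\to\OO_{K^j\tau_c,K^i\tau_d}$.

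For fullness, take an isomorphism $\psi\colon\OO_{K^j\tau_a,K^i\tau_b}\to\OO_{K^j\tau_c,K^i\tau_d}$. By \cite{P}, for such algebras the trivial submodule is $\langle 1\rangle$ and $\HH^\bot$ is the (unique, since $4$ occurs once in the partition $\{1,3,4\}$) four-dimensional irreducible $\Der$-submodule; so by Remark~\ref{R: Submodules}, $\psi(\langle 1\rangle)=\langle 1\rangle$ and $\psi(\HH^\bot)=\HH^\bot$, whence $\psi(\HH)=\HH$ as $\psi\in SO_8$ by Proposition~\ref{P: Ca}. Since each of $K^j\tau_a,K^i\tau_b,K^j\tau_c,K^i\tau_d$ fixes $1$, Lemma~\ref{L: G2} with $U=\langle 1\rangle$ yields $\psi\in G_2$ together with $\psi(K^j\tau_a)\psi^{-1}=K^j\tau_c$ and $\psi(K^i\tau_b)\psi^{-1}=K^i\tau_d$; cancelling the commuting factors $K^j$ and $K^i$ gives $\psi\tau_a\psi^{-1}=\tau_c$ and $\psi\tau_b\psi^{-1}=\tau_d$. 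As also $\psi\in G_2^\HH$, Lemma~\ref{L: Tau} provides $(p,\kappa_q)\in\T\rtimes SO_3$ with $\psi=\Delta(p,\kappa_q)$, and then the identity of the second paragraph turns $\psi\tau_a\psi^{-1}=\tau_c$, $\psi\tau_b\psi^{-1}=\tau_d$ into $pqa\overline{pq}=c$, $pqb\overline{pq}=d$ (since $\tau_r=\tau_{r'}$ forces $r=r'$). Hence $(p,\kappa_q)\cdot(a,b)=(c,d)$ and $\mathcal G^{ij}(p,\kappa_q)=\psi$, so $\mathcal G^{ij}$ is full. The main obstacle is the core identity of the second paragraph — a finicky but entirely routine octonion computation, of exactly the type carried out in the proof of Lemma~\ref{L: Tau}; a minor additional point is to extract the submodule structure of the algebras in $\mathcal D_{1,3,4}$ from \cite{P} in the precise form used in the fullness step.
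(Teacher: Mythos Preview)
Your proof is correct and follows essentially the same route as the paper: the key conjugation identity $\Delta(p,\kappa_q)\,\tau_a\,\Delta(p,\kappa_q)^{-1}=\tau_{pqa\overline{pq}}$ is exactly the paper's equation \eqref{E: tau}, and the remaining steps (well-definedness via Lemma~\ref{L: Dense1}, functoriality and faithfulness via $\Delta$, fullness via Remark~\ref{R: Submodules} and Lemma~\ref{L: G2}/Remark~\ref{R: U1}) match the paper's argument. The only cosmetic difference is that you invoke the unique four-dimensional irreducible $\HH^\bot$ to deduce $\psi(\HH)=\HH$, whereas the paper cites directly that $\HH$ is a submodule; both are equivalent consequences of the $\Der$-module structure from \cite{P}.
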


The isomorphism $\Delta:\T\rtimes SO_3\to G_2^\HH$ was defined in Lemma \ref{L: Tau} by \linebreak $(p,\kappa_q)\mapsto
\tau_{\overline{p}}\widehat{\kappa}_q$.

\begin{proof} By definition of the semi-direct product, \eqref{E: Action} defines an action $\alpha$ of $\T\rtimes SO_3$
on $\T\times\T$. The point $(1,1)$ is a fixed point of $\alpha$, while for each $p,q\in\T$,
\[(p,\kappa_q)\cdot (a,a^2)=(pqa\overline{pq}, pqa^2\overline{pq})=(pqa\overline{pq}, (pqa\overline{pq})^2),\]
with
\[(pqa\overline{pq})^2+pqa\overline{pq}+1=pq(a^2+a+1)\overline{pq}.\]
Thus $\alpha$ induces an action of $\T\rtimes SO_3$ on $\left(\T\times\T\right)_{ij}$ for each $(i,j)\in\Z_2^2$.

Fix now $(i,j)\in\Z_2^2$. By Lemma \ref{L: Dense1}, $\mathcal G^{ij}$ is well-defined on objects. Next we show well-definedness on
morphisms. First, identities are mapped to identities. Using \eqref{E: Trick} and the fact that $\widehat{\kappa}_q,\tau_p\in G_2$
for each $p,q\in\T$, one obtains, for each
$w\in\T$, that
\begin{equation}\label{E: tau}
\tau_{\overline{p}}\widehat{\kappa}_q\tau_w(\tau_{\overline{p}}\widehat{\kappa}_q)^{-1}=\tau_{pqw\overline{pq}}.
\end{equation}
Thus if $(p,\kappa_q)\cdot
(a,b)=(c, d)$, then $\phi=\tau_{\overline{p}}\widehat{\kappa}_q\in G_2$ satisfies
\[\begin{array}{lll} \phi K^j\tau_a\phi^{-1}=K^j\tau_c &\text{and}&\phi K^i\tau_b\phi^{-1}=K^i\tau_d,\end{array}.\]
since $\phi$ commutes with $K$. By Remark \ref{R: U1}, this proves that
$\mathcal G^{ij}$ maps morphisms to morphisms. Functoriality follows from the fact that $\Delta$ is a group
homomorphism, and faithfulness from the fact that $\Delta$ is injective.

To show that $G^{ij}$ is full, let 
\[\phi:A=\OO_{K^j\tau_a,K^i\tau_b}\to B=\OO_{K^j\tau_c,K^i\tau_d}\]
with $(a,b),(c,d)\in \left(\T\times\T\right)_{ij}$. From \cite{P} we know that $A_0=B_0=\langle 1\rangle$, and that $\HH$ is a
submodule of both $A$ and $B$. From Remark \ref{R: Submodules} it therefore follows that $\phi(1)=\pm1$. By Remark \ref{R: U1} we
have $\phi\in G_2$ and $(\phi\tau_a\phi^{-1},\phi\tau_b\phi^{-1})=(\tau_c,\tau_d)$. Remark \ref{R: Submodules} also implies that
$\phi(\HH)=\HH$. Lemma
\ref{L: Tau} then gives that $\phi=\Delta(p,\kappa_q)$ for some $p,q\in\T$, whence by \eqref{E: tau}, 
\[\begin{array}{lll}\tau_{pqa\overline{pq}}=\tau_c&\text{and}&\tau_{pqb\overline{pq}}=\tau_d,\end{array}\]
which by Lemma \ref{E: tau} implies that $c=pqa\overline{pq}$ and $d=pqb\overline{pq}$. Thus $\phi=\mathcal G^{ij}(p,\kappa_q)$
with $(p,\kappa_q)\cdot (a,b)=(c, d)$, and fullness is proved. Finally, $\mathcal G^{ij}$ is dense by Lemma \ref{L: Dense1},
and the proof is complete.
\end{proof}

\begin{Rk} Let $(i,j)\in\Z_2^2$. Since $\mathcal D_{1,3,4}$ is the coproduct of the two subcategories $\mathcal
D_{1,3,4}^s$ and $\mathcal D_{1,3,4}^a$, it may be
called for to specify for which $(a,b)\in(\T\times\T)_{ij}$ the image $\mathcal G^{ij}(a,b)$ belongs to either one. This
information can be
deduced immediately from the above cited Proposition 19 in \cite{P}. Namely, $\mathcal G^{ij}(a,b)\in\mathcal D_{1,3,4}^s$ if
$\{a,b\}\subseteq\{\pm1\}$, and $\mathcal G^{ij}(a,b)\in\mathcal D_{1,3,4}^a$ otherwise. 
\end{Rk}

\subsection{The Subcategories $\mathcal D_{1,1,2,4}$ and $\mathcal D_{1,1,1,1,4}$}

We define, for each $a,b\in\HH\subset\OO$ and each $k\in \Z_2$, the map $T_{a,b}^{(k)}:\OO\to\OO$ by
\[   T_{a,b}^{(k)}(x)=\left\{ \begin{array}{ll} aK^k(x)b & \text{if\ } x\in\HH,\\
                                           x  & \text{if\ } x\in\HH^\bot.
  \end{array}\right.\]
For each $(i,j)\in\Z_2^2$, the next lemma gives a set exhausting $\mathcal D_{1,1,2,4}^{ij}\amalg\mathcal D_{1,1,1,1,4}^{ij}$.

\begin{Lma}\label{L: Dense2} 
Let $(i,j)\in\Z_2^2$ and $A\in\mathcal D_{1,1,2,4}^{ij}\amalg\mathcal D_{1,1,1,1,4}^{ij}$. Then $A$ is isomorphic to
$\OO_{T_{a_1,b_1}^{(j)},T_{a_2,b_2}^{(i)}}$ for some
$(a_1,b_1,a_2,b_2)\in S_{ij}$.
Conversely,
\[\OO_{T_{a_1,b_1}^{(j)},T_{a_2,b_2}^{(i)}}\in\mathcal D_{1,1,2,4}^{ij}\amalg\mathcal D_{1,1,1,1,4}^{ij}\]
for each $(i,j)\in\Z_2^2$ and $(a_1,b_1,a_2,b_2)\in S_{ij}$.
\end{Lma}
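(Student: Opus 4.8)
The plan is to mimic the proof of Lemma \ref{L: Dense1}, with the operators $T_{a,b}^{(k)}$ playing the role there played by the automorphisms $\tau_a$. First I would invoke the exhaustive description of $\mathcal D_{1,1,2,4}\amalg\mathcal D_{1,1,1,1,4}$ obtained in \cite{P}: every $A$ in this class is isomorphic to an isotope $C_{f_1,f_2}$, where $C\in\mathcal A_8$ is unital with a fixed quaternion subalgebra $Q$, and where each $f_r$ restricts to the identity on $Q^\bot$ and to an orthogonal operator on $Q$, subject to the explicit genericity conditions from \cite{P} which cut out these two blocks. Since every orthogonal operator on the Euclidean space $Q\simeq\HH$ is of the form $w\mapsto x\,J^k(w)\,y$ with $x,y$ in the unit sphere of $Q$ and $k\in\Z_2$ determined by its determinant, and since $f_r|_{Q^\bot}=\Id$, Remark \ref{R: Double Sign} together with Proposition \ref{P: Double Sign} forces $k=j$ for $f_1$ and $k=i$ for $f_2$ whenever $A\in\mathcal D_{1,1,2,4}^{ij}\amalg\mathcal D_{1,1,1,1,4}^{ij}$.

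Next I would normalize the ambient algebra. By Lemma \ref{L: Canonical} there is an isomorphism $\phi:C\to\OO$ mapping the unity of $C$ to $1$ and $Q$ onto $\HH$; Lemma \ref{L: Juggling} then yields an isomorphism $A\simeq\OO_{\phi f_1\phi^{-1},\phi f_2\phi^{-1}}$. As $\phi$ fixes the unity it commutes with the standard involutions, so $\phi J\phi^{-1}=K$; moreover $\phi|_Q:Q\to\HH$ is an algebra isomorphism and $\phi(Q^\bot)=\HH^\bot$. Hence a direct computation on $\HH$ and on $\HH^\bot$ shows that $\phi f_1\phi^{-1}=T_{\phi(x_1),\phi(y_1)}^{(j)}$ and $\phi f_2\phi^{-1}=T_{\phi(x_2),\phi(y_2)}^{(i)}$, with all four parameters lying in $\T$ because $\phi$ is orthogonal. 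This proves $A\simeq\OO_{T_{a_1,b_1}^{(j)},T_{a_2,b_2}^{(i)}}$ for suitable $a_r,b_r\in\T$.

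Finally, the membership $(a_1,b_1,a_2,b_2)\in S_{ij}$ is obtained by transporting, along $\phi$ and the identification just made, the conditions from \cite{P} under which $C_{f_1,f_2}$ actually lies in $\mathcal D_{1,1,2,4}\amalg\mathcal D_{1,1,1,1,4}$ (rather than failing to be a division algebra or landing in another block); these transported conditions are precisely the defining conditions of $S_{ij}$. The converse is then immediate: applying the same result of \cite{P} with $C=\OO$, $Q=\HH$ and parameters $(a_r,b_r)$ shows that $\OO_{T_{a_1,b_1}^{(j)},T_{a_2,b_2}^{(i)}}\in\mathcal D_{1,1,2,4}^{ij}\amalg\mathcal D_{1,1,1,1,4}^{ij}$ for every $(a_1,b_1,a_2,b_2)\in S_{ij}$.

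I expect the main obstacle to be the penultimate step: carefully matching the genericity conditions of \cite{P} with the explicit description of $S_{ij}$, in particular verifying that these conditions cut out the coproduct $\mathcal D_{1,1,2,4}\amalg\mathcal D_{1,1,1,1,4}$ jointly rather than either block separately, and tracking how a nontrivial $i$ or $j$ (i.e.\ a determinant $-1$) interacts with the form $w\mapsto x\,J^k(w)\,y$. The normalization and conjugation computations are routine once Lemmas \ref{L: Canonical} and \ref{L: Juggling} are in hand.
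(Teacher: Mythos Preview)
Your proposal is correct and follows essentially the same route as the paper's proof: invoke the exhaustive list from \cite{P} (there it is Proposition~40, which already hands you the operators in the form $T'^{(m)}_{x,y}$ rather than a generic orthogonal map on $Q$), use the double sign to pin down $(k,l)=(i,j)$, normalize $C$ to $\OO$ via Lemmas~\ref{L: Canonical} and~\ref{L: Juggling}, compute $\phi T'^{(m)}_{x,y}\phi^{-1}=T^{(m)}_{\phi(x),\phi(y)}$, and then cite Proposition~28 of \cite{P} for both the $S_{ij}$ condition and the converse. One small inaccuracy in your commentary: the isotopes $\OO_{f,g}$ with $f,g\in O_8$ are automatically in $\mathcal A_8$ and hence division algebras, so the conditions defining $S_{ij}$ only serve to exclude the other blocks of $\mathcal D$, not to ensure division.
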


For each $(i,j)\in\Z_2^2$, $S_{ij}$ is defined as the set of all
$(a_1,b_1,a_2,b_2)\in\left(\T\right)^4\setminus\{\pm1\}^4$ which do
\emph{not} satisfy
\[\dim\langle \Im(a_k),\Im(b_k)|k\in\{1,2\}\rangle =1 \wedge a_1=(-1)^j b_1\wedge a_2=(-1)^i b_2,\]
where $\Im:\OO\to\OO$ is the orthogonal projection onto $1^\bot$.

The proof is similar in spirit to that of Lemma \ref{L: Dense1}; the details are as follows.
 
\begin{proof} Let $A\in\mathcal D_{1,1,2,4}^{ij}\amalg\mathcal D_{1,1,1,1,4}^{ij}$. By \cite{P} (Proposition 40), there exists
$C\in\mathcal A_8$ with unity
$e$, a
fixed quaternion
subalgebra $Q$, and elements $x_1,x_2,y_1,y_2\in Q$ with $\|x_iy_i\|=1$ for each $i\in\{1,2\}$, such that
\[A\simeq C_{T'^{(l)}_{x_1,y_1},T'^{(k)}_{x_2,y_2}}\]
for some $(k,l)\in\Z_2^2$. The map $T'^{(m)}_{x,y}:C\to C$ is defined for each $m\in\Z_2$ and each $x,y\in \mathbb S(Q)$ by
\[   T'^{(m)}_{x,y}(w)=\left\{ \begin{array}{ll} xJ^m(w)y & \text{if\ } w\in Q,\\
                                           w  & \text{if\ } w\in Q^\bot,
  \end{array}\right.\]              
where $J$ is the standard involution on $C$. Proposition
\ref{P: Double Sign} then gives $(k,l)=(i,j)$. As before, Lemma \ref{L: Canonical} guarantees the existence of an
isomorphism
\[\phi: C \to \OO\]
mapping $e$ to 1 and $Q$ to $\HH$. Thence by Lemma \ref{L: Juggling} we have the isomorphism
\[\phi: C_{T'^{(j)}_{x_1,y_1},T'^{(i)}_{x_2,y_2}} \to \OO_{\phi T'^{(j)}_{x_1,y_1}\phi^{-1},\phi T'^{(i)}_{x_2,y_2}\phi^{-1}}.\]
Note that upon rescaling $x$ and $y$ we may assume that $\|x_i\|=\|y_i\|=1, i\in\{1,2\}$. Next we prove that $\phi
T'^{(m)}_{x,y}\phi^{-1}=T^{(m)}_{\phi(x),\phi(y)}$ for each $m\in\Z_2$ and $x,y\in Q$ with $\|x\|=\|y\|=1$. Since $\phi$ maps $Q$
to
$\HH$, and therefore $Q^\bot$ to $\HH^\bot$, the left hand side fixes $\HH^\bot$ pointwise, and as does the right hand side. Given
any $w\in\HH$ we obtain, using the fact that $\phi:C\to\OO$ is an isomorphism, that
\[\phi T'^{(m)}_{x,y}\phi^{-1}(w)=\phi(xJ^m\phi^{-1}(w)y)=\phi(x)K^m(w)\phi(y)=T^{(m)}_{\phi(x),\phi(y)}(w).\]
Thus $A\simeq\OO_{T_{a_1,b_1}^{(j)},T_{a_2,b_2}^{(i)}}$ for some $(a_1,b_1,a_2,b_2)\in \left(\T\right)^4$. Now Proposition 28 in
\cite{P}
implies that 
$\OO_{T_{a_1,b_1}^{(j)},T_{a_2,b_2}^{(i)}}\in\mathcal D_{1,1,2,4}^{ij}\amalg\mathcal D_{1,1,1,1,4}^{ij}$ precisely when
$(a_1,b_1,a_2,b_2)\in S_{ij}$, which completes the argument. The
converse holds
by applying Proposition 28 from \cite{P} with $C=\OO$ and $(x_1,y_1,x_2,y_2)=(a_1,b_1,a_2,b_2)$.
\end{proof}

To obtain a description, we will again make use of the group $\T\rtimes SO_3$, acting on a set that we
will now construct. Consider thus the normal subgroup $\{\pm(1,1)\}$ of $\T\times\T$, and denote the corresponding
quotient group by $\left[\T\times\T\right]$, and its elements by $[a,b]$ with $a,b\in\T$. We obtain a surjective map
\[\left(\T\right)^4\to \left[\T\times\T\right]\times\left[\T\times\T\right], (a_1,b_1,a_2,b_2)\mapsto ([a_1,b_1],[a_2,b_2]),\]
and denote, for each $(i,j)\in\Z_2^2$, the image of $S_{ij}$ under this map by $[S_{ij}]$.

\begin{Prp}\label{P: 1114} Let $i,j\in\Z_2$. The group $\T\rtimes SO_3$ acts on $[S_{ij}]$ by
\begin{equation}\label{E: Action2}
(p,\kappa_q)\cdot ([a_1,b_1],[a_2,b_2])=([\kappa_q(a_1), \kappa_q(b_1)],[\kappa_q(a_2),\kappa_q(b_2)]). 
\end{equation}
The map
\[\mathcal I^{ij}: \phantom{}_{\T\rtimes SO_3}[S_{ij}]\to \mathcal D_{1,1,2,4}^{ij}\amalg\mathcal D_{1,1,1,1,4}^{ij}\]
acting on objects by
\[([a_1,b_1],[a_2,b_2])\mapsto \OO_{T_{a_1,b_1}^{(j)},T_{a_2,b_2}^{(i)}}\]
and on morphisms by $(p,\kappa_q)\mapsto \Delta((p,\kappa_q))$, is an equivalence of categories.
\end{Prp}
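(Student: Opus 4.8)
The plan is to follow the same five-part template used in the proofs of Propositions \ref{P: 1,1,6} and \ref{P: 1,3,4}: check that \eqref{E: Action2} is a well-defined action, then verify that $\mathcal I^{ij}$ is well-defined on objects and morphisms, functorial, faithful, full, and dense. Several of these steps are essentially copies of earlier arguments. Denseness on objects is immediate from Lemma \ref{L: Dense2}. Well-definedness on objects requires noting that $\OO_{T_{a_1,b_1}^{(j)},T_{a_2,b_2}^{(i)}}$ depends only on the classes $[a_1,b_1]$ and $[a_2,b_2]$ in $[\T\times\T]$, since $T_{a,b}^{(k)}=T_{-a,-b}^{(k)}$ (the sign is absorbed in the product $aK^k(x)b$), and that $\mathcal I^{ij}$ lands in the right block by Lemma \ref{L: Dense2}. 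Functoriality and faithfulness are inherited from $\Delta$ being an injective group homomorphism (Lemma \ref{L: Tau}), exactly as before.

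For well-definedness on morphisms, I would first establish the conjugation formula analogous to \eqref{E: t} and \eqref{E: tau}: for $\phi=\Delta(p,\kappa_q)=\tau_{\overline{p}}\widehat\kappa_q\in G_2^\HH$ and $a,b\in\T$, one computes
\[
\phi\, T_{a,b}^{(k)}\,\phi^{-1}=T_{\kappa_q(a),\kappa_q(b)}^{(k)}.
\]
On $\HH$ this follows because $\phi|_\HH=\kappa_q$ is an algebra automorphism commuting with $K$, so $\phi(aK^k(\phi^{-1}(w))b)=\kappa_q(a)K^k(w)\kappa_q(b)$ for $w\in\HH$; on $\HH^\bot$ both sides are the identity because $\phi(\HH^\bot)=\HH^\bot$ (recall $\widehat\kappa_q(xz)=\kappa_q(x)z$ and $\tau_{\overline p}$ maps $\HH z$ to itself) and $T_{a,b}^{(k)}$ fixes $\HH^\bot$ pointwise. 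Note this formula does not see $p$ at all, which is why the action \eqref{E: Action2} is through $\kappa_q$ only — in contrast with \eqref{E: tau}, where the $\tau_w$-conjugation picked up the factor $pq(\cdot)\overline{pq}$; here $\tau_{\overline p}$ commutes with every $T_{a,b}^{(k)}$ since the latter is the identity on $\HH z$. Granting the formula, if $(p,\kappa_q)\cdot([a_1,b_1],[a_2,b_2])=([c_1,d_1],[c_2,d_2])$ then $\phi T_{a_1,b_1}^{(j)}\phi^{-1}=T_{c_1,d_1}^{(j)}$ and $\phi T_{a_2,b_2}^{(i)}\phi^{-1}=T_{c_2,d_2}^{(i)}$, so Remark \ref{R: U1} (applied with the common fixed vector $1$, since each of these maps fixes $1\in\HH$) gives that $\phi:\OO_{T_{a_1,b_1}^{(j)},T_{a_2,b_2}^{(i)}}\to\OO_{T_{c_1,d_1}^{(j)},T_{c_2,d_2}^{(i)}}$ is an isomorphism.

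Fullness is where I expect the main work, and it is the step most in need of care. Given an isomorphism $\phi:A=\OO_{T_{a_1,b_1}^{(j)},T_{a_2,b_2}^{(i)}}\to B=\OO_{T_{c_1,d_1}^{(j)},T_{c_2,d_2}^{(i)}}$, I want to conclude $\phi\in G_2^\HH$ and then read off $(p,\kappa_q)$ via Lemma \ref{L: Tau}. The key input is that the $4$-dimensional submodule $\HH$ occurs with multiplicity one in the partitions $\{1,1,2,4\}$ and $\{1,1,1,1,4\}$, so by Remark \ref{R: Submodules} it is the unique $4$-dimensional $\Der$-submodule and hence $\phi(\HH)=\HH$; likewise $\phi(A_0)=B_0$. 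Since $f=T_{a_1,b_1}^{(j)}$ etc.\ are all the identity on $\HH^\bot$, and $\phi(\HH^\bot)=\HH^\bot$, Lemma \ref{L: G2} applies with $U=\HH^\bot$ (or more simply one checks $\phi$ fixes $1$ up to sign and invokes Remark \ref{R: U1}), giving $\phi\in G_2$ with $\phi T_{a_k,b_k}^{(\cdot)}\phi^{-1}=T_{c_k,d_k}^{(\cdot)}$; combined with $\phi(\HH)=\HH$ this puts $\phi\in G_2^\HH$, so $\phi=\Delta(p,\kappa_q)$. Plugging into the conjugation formula yields $T_{\kappa_q(a_k),\kappa_q(b_k)}^{(\cdot)}=T_{c_k,d_k}^{(\cdot)}$, and the final step is to deduce $[\kappa_q(a_k),\kappa_q(b_k)]=[c_k,d_k]$ in $[\T\times\T]$, i.e.\ that $T^{(m)}_{x,y}=T^{(m)}_{x',y'}$ with $x,y,x',y'\in\T$ forces $(x',y')=\pm(x,y)$. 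This is the one genuinely new little lemma: restricting to $\HH$, $xK^m(w)y=x'K^m(w)y'$ for all $w\in\HH$, so $(x'^{-1}x)\,K^m(w)\,(y y'^{-1})=K^m(w)$ for all $w$, and since left and right multiplications by unit quaternions that jointly fix all of $\HH$ must be $(x'^{-1}x,yy'^{-1})=(s,s^{-1})$ with $s$ central, i.e.\ $s=\pm1$, we get $(x',y')=\pm(x,y)$. With that in hand $\phi=\mathcal I^{ij}(p,\kappa_q)$ and fullness follows, completing the proof.
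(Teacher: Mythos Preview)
Your overall approach matches the paper's proof step for step, and the ``new little lemma'' on the $2$-$1$ nature of $(a,b)\mapsto T^{(k)}_{a,b}$ is exactly what the paper invokes. However, two specific claims are wrong and need repair.

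First, $T^{(k)}_{a,b}$ does \emph{not} fix $1$: by definition $T^{(k)}_{a,b}(1)=a\,K^k(1)\,b=ab$, which is generally not $1$. So your appeal to Remark~\ref{R: U1} for well-definedness on morphisms is unjustified. The correct argument is the one you already prepared and then bypassed: all four maps $T^{(\cdot)}_{a_k,b_k}$, $T^{(\cdot)}_{c_k,d_k}$ act as the identity on $\HH^\bot$ and $\phi(\HH^\bot)=\HH^\bot$, so Lemma~\ref{L: G2} with $U=\HH^\bot$ applies. (Alternatively, since $\phi=\Delta(p,\kappa_q)\in G_2$, the pair $(\phi,\phi)$ is already a triality pair and Proposition~\ref{P: Ca} gives the isomorphism directly.)

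Second, in the fullness step you assert that $\HH$ is the irreducible $4$-dimensional $\Der$-submodule. It is not: the irreducible $4$-dimensional submodule is $\HH^\bot$ (cf.\ \cite{P}; note for instance that in $\mathcal D_{1,1,1,1,4}$ the subspace $\HH$ carries the four trivial summands, so $\HH=A_0$). Thus Remark~\ref{R: Submodules} gives $\phi(\HH^\bot)=\HH^\bot$, and \emph{then} $\phi(\HH)=\HH$ follows by orthogonality. With $U=\HH^\bot$ in Lemma~\ref{L: G2} you get $\phi\in G_2$ and the conjugation identities; combined with $\phi(\HH)=\HH$ this puts $\phi\in G_2^\HH$, and Lemma~\ref{L: Tau} finishes as you indicate. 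Your parenthetical shortcut ``$\phi$ fixes $1$ up to sign'' is likewise unjustified here, since $A_0$ has dimension $2$ or $4$ and need not single out $\langle 1\rangle$.
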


The map $\Delta$ was defined in Lemma \ref{L: Tau}.

\begin{proof} Since $\kappa_q$ is an automorphism of $\HH$, \eqref{E: Action2} defines an action of $\T\rtimes
SO_3$ on $\left[\T\times\T\right]\times\left[\T\times\T\right]$, mapping $[S_{ij}]$ to itself for each
$(i,j)\in\Z_2^2$.

Let now $(i,j)\in\Z_2^2$. The map $\mathcal I^{ij}$ is well-defined on objects by Lemma \ref{L: Dense2} and the fact that for
each $k\in \Z_2$ and each $a,b\in\T$,
\[T^{(k)}_{a,b}=T^{(k)}_{-a,-b}.\]
To show well-definedness on morphisms we first note that identities are mapped to identities. Moreover, for each
$a,b,p,q\in\mathbb
S(\HH)$ and
each $k\in\Z_2$,
\begin{equation}\label{E: T}
\tau_{\overline{p}}\widehat{\kappa}_qT_{a,b}^{(k)}(\tau_{\overline{p}}\widehat{\kappa}_q)^{-1}=T_{qa\overline{q},qb\overline{q}}^{
(k)}.
\end{equation}
Thus if
\[(p,\kappa_q)\cdot ([a_1,b_1],[a_2,b_2])=([c_1, d_1],[c_2,d_2])\]
then for $\phi=\tau_{\overline{p}}\widehat{\kappa}_q\in G_2$ we have
\[\begin{array}{lll} \phi T_{a_1,b_1}^{(j)}\phi^{-1}=T_{c_1,d_1}^{(j)} &\text{and}&\phi
T_{a_2,b_2}^{(i)}\phi^{-1}=T_{c_2,d_2}^{(i)}.\end{array}\]
Since $T^{(k)}_{a,b}$ acts as identity on $\HH^\bot$ for each $k\in\Z_2$ and $a,b\in\T$, and since $\phi$ maps $\HH^\bot$ to
itself, we may apply Lemma \ref{L: G2} to deduce that $\phi$ is indeed a morphism from
$\OO_{T_{a_1,b_1}^{(j)},T_{a_2,b_2}^{(i)}}$ to $\OO_{T_{c_1,d_1}^{(j)},T_{c_2,d_2}^{(i)}}$. Functoriality and faithfulness follow
from $\Delta$
being a group monomorphism, and denseness from Lemma \ref{L: Dense2}. To see that the functor is full, let
$A=\OO_{T_{a_1,b_1}^{(j)},T_{a_2,b_2}^{(i)}}$ and
$B=\OO_{T_{c_1d_1}^{(j)},T_{c_2d_2}^{(i)}}$ for some $(a_1,b_1,a_2,b_2)$ and $(c_1,d_1,c_2,d_2)$ in $S_{ij}$. From \cite{P} it
follows that $\HH^\bot$ is an irreducible submodule of dimension four of $A$ as well as of $B$. Thus by Remark \ref{R:
Submodules}, any isomorphism
$\phi: A\to B$ maps $\HH^\bot$ to
itself. We may then apply Lemma \ref{L: G2} to conclude that $\phi\in G_2$ and that 
\[\begin{array}{lll} \phi T_{a_1,b_1}^{(j)}\phi^{-1}=T_{c_1,d_1}^{(j)} &\text{and}&\phi
T_{a_2,b_2}^{(i)}\phi^{-1}=T_{c_2,d_2}^{(i)}.\end{array}\]
Lemma \ref{L: Tau} further implies that
$\phi=\Delta((p,\kappa_q))$ for some $p,q\in\T$, and by \eqref{E: T} we have
\begin{equation}\label{E: Tab}
\begin{array}{lll} T_{qa_1\overline{q},qb_1\overline{q}}^{(j)}=T_{c_1,d_1}^{(j)}
&\text{and}&T_{qa_2\overline{q},qb_2\overline{q}}^{(i)}=T_{c_2,d_2}^{(i)}.\end{array} 
\end{equation}
Now the map $(a,b)\mapsto T^{(k)}_{a,b}$ is a 2-1-map from $\T\times\T$ to the set of all
isometries of $\HH$ with determinant $k$, the preimage of $T^{(k)}_{a,b}$ being $\{\pm(a,b)\}$. Thus \eqref{E: Tab} implies that
\[([qa_1\overline{q},qb_1\overline{q}],[qa_2\overline{q},qb_2\overline{q}])=([c_1,d_1],[c_2,d_2]).\]
Altogether, for each $\phi: A\to B$ there exist $p,q\in\T$ with
\[(p,\kappa_q)\cdot ([a_1,b_1],[a_2,b_2])=([c_1,d_1],[c_2,d_2]),\]
such that $\phi=\mathcal I^{ij}(p,\kappa_q)$. This proves fullness, whereby the proof is complete.
\end{proof}

\begin{Rk} Similarly to the previous case we have, for each $(i,j)\in \Z_2^2$, a description of the coproduct of
two subcategories without it being obvious which objects belong to one or the other. From Proposition 28 in \cite{P}
we can however read off that $\mathcal I^{ij}([a_1,b_1],[a_2,b_2])$ is in $\mathcal D_{1,1,2,4}$
if $\dim[\Im(a_k),\Im(b_k)|k\in\{1,2\}]=1$, and in $\mathcal D_{1,1,1,1,4}$ otherwise. Descriptions of $\mathcal
D_{1,1,2,4}^{ij}$ and $\mathcal D_{1,1,1,1,4}^{ij}$ for each $(i,j)\in\Z_2^2$ can thus be obtained by restricting the functor
$\mathcal I^{ij}$ accordingly.
\end{Rk}

\section{Quasi-Descriptions}
A description of a groupoid $\mathcal C$ takes into account all isomorphisms in $\mathcal C$. As we saw above, for subcategories
of $\mathcal D$ the descriptions involve a large amount of information. In this section we introduce the notion of
a quasi-description, and apply it to the groupoid $\mathcal D$. The idea is that for
classification purposes, it is not necessary to consider all isomorphisms between the objects of a groupoid, as it suffices to
detect whether or not there exists an isomorphism between two objects. In the case of the groupoid
$\mathcal D$, this gives, as we will see, a simpler and more unified approach, from which we will be able to obtain a
classification relatively easily.

\subsection{Preliminaries}
We begin by defining the general set-up.

\begin{Def} A functor $\mathcal F:\mathcal B\to\mathcal C$ between two categories $\mathcal B$ and $\mathcal C$ is said to
\emph{detect non-isomorphic objects} if for any $B,B' \in\mathcal B$,
\[\mathcal F(B)\simeq\mathcal F(B') \Longrightarrow B \simeq B'.\]
\end{Def}

Note that the inverse implication holds for all functors. If $\mathcal B$ and $\mathcal C$ are groupoids, then $\mathcal
F:\mathcal B\to\mathcal C$ detects non-isomorphic objects if and only if each non-empty morphism class in $\mathcal C$ contains at
least one morphism that is in the image of $\mathcal F$. 

\begin{Ex} Any full functor between groupoids detects non-isomorphic objects.\end{Ex}

\begin{Prp}\label{P: Quasi} Let $\mathcal F:\mathcal B\to\mathcal C$ be a dense functor between two categories
$\mathcal B$ and $\mathcal C$ which detects non-isomorphic objects. If an object class $\mathcal S\subseteq \mathcal B$ classifies
$\mathcal B$, then $\mathcal F(\mathcal S)$ classifies $\mathcal C$.
\end{Prp}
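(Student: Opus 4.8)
The plan is to unwind the three hypotheses---that $\mathcal{F}$ is dense, that it detects non-isomorphic objects, and that $\mathcal{S}$ classifies $\mathcal{B}$---and check that $\mathcal{F}(\mathcal{S})$ satisfies the two defining properties of a classifying object class for $\mathcal{C}$: that every object of $\mathcal{C}$ is isomorphic to some object in $\mathcal{F}(\mathcal{S})$, and that no two distinct objects of $\mathcal{F}(\mathcal{S})$ are isomorphic. (Here I read ``$\mathcal{S}$ classifies $\mathcal{B}$'' as: $\mathcal{S}$ is a transversal for the isomorphism classes of $\mathcal{B}$, i.e.\ every object of $\mathcal{B}$ is isomorphic to exactly one object of $\mathcal{S}$.)

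First I would prove the covering property. Let $C\in\mathcal{C}$ be arbitrary. Since $\mathcal{F}$ is dense, there is $B\in\mathcal{B}$ with $\mathcal{F}(B)\simeq C$. Since $\mathcal{S}$ classifies $\mathcal{B}$, there is $S\in\mathcal{S}$ with $S\simeq B$ in $\mathcal{B}$; applying the functor $\mathcal{F}$ (which preserves isomorphisms, as every functor does) gives $\mathcal{F}(S)\simeq\mathcal{F}(B)\simeq C$. Hence $C$ is isomorphic to $\mathcal{F}(S)\in\mathcal{F}(\mathcal{S})$, as required.

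Next I would prove that $\mathcal{F}(\mathcal{S})$ contains no two isomorphic objects, and here I must be slightly careful about what ``$\mathcal{F}(\mathcal{S})$ classifies $\mathcal{C}$'' should mean when $\mathcal{F}$ is not injective on objects; the cleanest reading is that the objects of $\mathcal{F}(\mathcal{S})$ are pairwise non-isomorphic, which forces $\mathcal{F}$ to be injective on $\mathcal{S}$ as a byproduct. So suppose $S,S'\in\mathcal{S}$ with $\mathcal{F}(S)\simeq\mathcal{F}(S')$ in $\mathcal{C}$. Because $\mathcal{F}$ detects non-isomorphic objects, this yields $S\simeq S'$ in $\mathcal{B}$. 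But $\mathcal{S}$ classifies $\mathcal{B}$, so isomorphic objects of $\mathcal{S}$ must coincide: $S=S'$, hence $\mathcal{F}(S)=\mathcal{F}(S')$. Thus distinct elements of $\mathcal{F}(\mathcal{S})$ are non-isomorphic. Combining the two parts, $\mathcal{F}(\mathcal{S})$ is a transversal for the isomorphism classes of $\mathcal{C}$, i.e.\ it classifies $\mathcal{C}$.

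The argument is essentially a diagram chase and presents no real obstacle; the only point requiring attention---more a matter of bookkeeping than of difficulty---is to pin down the intended meaning of ``classifies,'' and in particular to observe that detecting non-isomorphic objects, together with $\mathcal{S}$ being a transversal in $\mathcal{B}$, automatically makes $\mathcal{F}|_{\mathcal{S}}$ injective on objects, so that passing to the image does not collapse distinct isomorphism classes. If instead ``classifies'' is taken to mean merely ``meets every isomorphism class at least once'' (a full system of representatives without the uniqueness clause), then only the covering half of the argument is needed and the detection hypothesis is not used; but the version with uniqueness is the one that makes the hypothesis on $\mathcal{F}$ do work, and is presumably what is intended here since it is what one needs in order to read off an actual classification.
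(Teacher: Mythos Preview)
Your proof is correct and follows essentially the same argument as the paper: both establish density of $\mathcal F(\mathcal S)$ in $\mathcal C$ by chaining denseness of $\mathcal F$ with denseness of $\mathcal S$ in $\mathcal B$, and both establish irredundancy by using detection of non-isomorphic objects to pull an isomorphism $\mathcal F(S)\simeq\mathcal F(S')$ back to $S\simeq S'$ and then invoke irredundancy of $\mathcal S$. Your reading of ``classifies'' matches the paper's definition (object class of a skeleton: dense and with pairwise non-isomorphic objects), so your side discussion, while not needed, is consistent with the intended meaning.
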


Here we say that a subclass $\mathcal S$ of a category $\mathcal B$ \emph{classifies} $\mathcal B$ or \emph{is a classification
of} $\mathcal B$ if $\mathcal S$ is the object class of a skeleton of $\mathcal B$, i.e.\ if $\mathcal S$ is dense in $\mathcal
B$ and different objects in $\mathcal S$ are non-isomorphic.

\begin{proof} If $C\in\mathcal C$, then $C\simeq \mathcal F(B)$ for some $B\in \mathcal B$ by denseness of $\mathcal F$, and
$B\simeq S$ for some $S\in\mathcal S$ as $\mathcal S$ is a classification. Thus $C\simeq\mathcal F(B)\simeq\mathcal F(S)$, whence
$\mathcal F(\mathcal S)$ is dense in $\mathcal C$. Take now $C\neq C'\in\mathcal
F(\mathcal S)$. Then $C=\mathcal F(S)$ and $C'=\mathcal F(S')$ with $S\neq S'\in \mathcal S$. If $C\simeq C'$ in $\mathcal C$,
then $S\simeq S'$ in $\mathcal B$ as $\mathcal F$ detects non-isomorphic objects. This contradicts $\mathcal S$ being a
classification of $\mathcal B$, whence necessarily $C\not\simeq C'$.
\end{proof}

\begin{Rk} Given $\mathcal C$ for which one can provide a category $\mathcal B$ and a dense functor
$\mathcal F:\mathcal B\to\mathcal C$ detecting non-isomorphic objects, the problem of finding a classification of $\mathcal C$
carries
over to that of $\mathcal
B$. However, as the
functor is in general neither faithful nor full, more precise information about the morphisms may be lost. 
\end{Rk}

We now use this to generalize the concept of a description as follows.

\begin{Def} A \emph{quasi-description} of a category $\mathcal C$ is a quadruple $(G,M,\alpha,\mathcal F)$
where $G$ is a group, $M$ is a set, $\alpha: G\times M\to M$ a group action, and $\mathcal F: \phantom{}_GM\to\mathcal C$ a
dense functor which detects non-isomorphic objects. 
\end{Def}

If $\mathcal C$ is a groupoid, then every description of $\mathcal C$ is a quasi-description. In general, the above proposition
and remark imply that the problem of classifying $\mathcal C$ is transferred, as in the case where there
is a description, to the solution of the normal form problem for the action $\alpha$.

Equipped with these tools, we return to composition algebras. Recall that for each $A\in\mathcal D$, we set
\[A_0=\{a\in A|\forall \delta\in\Der(A): \delta(a)=0\}.\]
We define the full subcategories $\mathcal L$ and $\mathcal H$ of $\mathcal D$ with object classes
\[\mathcal L=\{A\in\mathcal D|\dim A_0\leq 1\}\]
and
\[\mathcal H=\{A\in\mathcal D|\dim A_0> 1\}.\]
Thus $\mathcal D=\mathcal L\amalg\mathcal H$. We will treat each of these subcategories separately.
Before doing so, we define the group actions to be used in the
quasi-descriptions. As before we identify $\T$ with $\mathbb S(\HH)$ and $SO_3$ with $\left\{\kappa_q|q\in\T\right\}$. Then $SO_3$
acts on
$\T\times\T$ by
\[\kappa_q\cdot (a,b)=(\kappa_q(a),\kappa_q(b)).\]
When there is no ambiguity, we will simply refer to this as \emph{the action of $SO_3$ on $\T\times\T$}. This defines the groupoid
$_{SO_3}\left(\T\times\T\right)$. Moreover, it induces the action of $SO_3$ on $\left[\T\times\T\right]=(\T\times\T)/\{\pm(1,1)\}$
given by
\[\kappa_q\cdot[a,b]=[\kappa_q(a),\kappa_q(b)].\]
This will in turn be referred to as \emph{the action of $SO_3$ on $\left[\T\times\T\right]$}, and defines the groupoid
$_{SO_3}\left[\T\times\T\right]$. Using this, the groupoid $_{SO_3}\left[\T\times\T\right]^2$ is defined by
\[\kappa_q\cdot([a_1,b_1],[a_2,b_2])=([\kappa_q(a_1),\kappa_q(b_1)],[\kappa_q(a_2),\kappa_q(b_2)]),\]
and the corresponding action will be referred to as \emph{the action of $SO_3$ on $\left[\T\times\T\right]^2$}.

\subsection{Algebras with Low-Dimensional Trivial Submodule}
From Proposition \ref{P: Decomp} we know that the category $\mathcal L$ admits the coproduct decomposition
\[\mathcal L=\mathcal D_{1,7}\amalg\mathcal D_{\{8\}}\amalg \mathcal D_{1,3,4}\amalg\mathcal D_{3+5}.\]
In this section we will obtain a quasi-description of
\[\mathcal L_0=\mathcal D_{1,7}\amalg\mathcal D_{\{8\}}\amalg \mathcal D_{1,3,4},\]
while $\mathcal D_{3,5}$ will be treated in Section 5. A quasi-description of
each double sign component of $\mathcal L_0$ is obtained as follows.

\begin{Thm}\label{T: Quasi1} Let $(i,j)\in \Z_2^2$. The map
\[\mathcal J^{ij}: \phantom{}_{SO_3}\left(\T\times\T\right) \to \mathcal L_0^{ij}\]
defined on objects by
\[(a,b)\mapsto \OO_{K^j\tau_a,K^i\tau_b}\]
and on morphisms by $\kappa_q\mapsto\widehat{\kappa}_q$, is a dense functor detecting non-isomorphic objects.
\end{Thm}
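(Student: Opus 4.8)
The claim has three parts: $\mathcal J^{ij}$ is a well-defined functor, it is dense, and it detects non-isomorphic objects. The strategy is to relate $\mathcal J^{ij}$ to the known descriptions $\mathcal G^{ij}$ of $\mathcal D_{1,3,4}^{ij}$ (Proposition \ref{P: 1,3,4}), $\mathcal F^{ij}$-type data for $\mathcal D_{1,7}^{ij}$ (Proposition \ref{P: 1,7}), and the singleton classification of $\mathcal D_{\{8\}}=\mathcal D_{\{8\}}^{11}$, rather than to reprove everything from scratch. First I would check well-definedness on objects: for $(a,b)\in\T\times\T$ the algebra $\OO_{K^j\tau_a,K^i\tau_b}$ is a genuine isotope of $\OO$, and by Proposition 19 of \cite{P} (as used in Lemma \ref{L: Dense1}) it lies in $\mathcal D_{1,3,4}^{ij}$ when $(a,b)\in(\T\times\T)_{ij}$, while for the finitely many excluded pairs $(a,b)\notin(\T\times\T)_{ij}$ it lies in $\mathcal D_{1,7}^{ij}$ (the standard case, $a,b\in\{1\}$, i.e.\ $\OO_{K^j,K^i}$) or in $\mathcal D_{\{8\}}^{11}$ (the Okubo case, when $(i,j)=(1,1)$ and $(a,b)=(a,a^2)$ with $a^2+a+1=0$; cf.\ Example \ref{E: EM} with $\tau=\tau_{(\sqrt3u-1)/2}$). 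In every case the image is in $\mathcal L_0^{ij}$, so $\mathcal J^{ij}$ is well-defined on objects.

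For well-definedness on morphisms: a morphism in $_{SO_3}(\T\times\T)$ from $(a,b)$ to $(c,d)$ is a $\kappa_q$ with $\kappa_q(a)=c$, $\kappa_q(b)=d$. I must show $\widehat\kappa_q:\OO_{K^j\tau_a,K^i\tau_b}\to\OO_{K^j\tau_c,K^i\tau_d}$ is an algebra morphism. Since $\widehat\kappa_q\in G_2$ and $\widehat\kappa_q$ commutes with $K$, by Remark \ref{R: U1} it suffices that $\widehat\kappa_q\tau_a\widehat\kappa_q^{-1}=\tau_c$ and $\widehat\kappa_q\tau_b\widehat\kappa_q^{-1}=\tau_d$. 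This is the special case $p=1$ of the identity \eqref{E: tau}, $\tau_{\overline p}\widehat\kappa_q\tau_w(\tau_{\overline p}\widehat\kappa_q)^{-1}=\tau_{pqw\overline{pq}}$, giving $\widehat\kappa_q\tau_w\widehat\kappa_q^{-1}=\tau_{qw\overline q}=\tau_{\kappa_q(w)}$; applied to $w=a,b$ this yields exactly $\tau_c,\tau_d$. Functoriality follows since $q\mapsto\widehat\kappa_q$ is a group homomorphism $SO_3\to G_2$ (indeed $\widehat\kappa_q\widehat\kappa_{q'}=\widehat\kappa_{qq'}$).

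Denseness is immediate: given $A\in\mathcal L_0^{ij}$, it lies in one of $\mathcal D_{1,7}^{ij}$, $\mathcal D_{\{8\}}^{ij}$, $\mathcal D_{1,3,4}^{ij}$. If $A\in\mathcal D_{1,3,4}^{ij}$, Lemma \ref{L: Dense1} gives $A\simeq\OO_{K^j\tau_a,K^i\tau_b}=\mathcal J^{ij}(a,b)$ for some $(a,b)\in(\T\times\T)_{ij}$; if $A\in\mathcal D_{1,7}^{ij}$ then $A\simeq\OO_{K^j,K^i}=\mathcal J^{ij}(1,1)$; if $A\in\mathcal D_{\{8\}}$ then $(i,j)=(1,1)$ and $A\simeq P^{11}=\OO_{K\tau,K\tau^{-1}}=\mathcal J^{11}(a,a^2)$ where $a=(\sqrt3u-1)/2$ satisfies $a^2+a+1=0$ and $a^{-1}=\overline a=a^2$ (here one uses that $\tau_{a^{-1}}=\tau_a^{-1}$ and $\tau^{-1}=\tau_{\overline a}$). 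So $\mathcal J^{ij}$ is dense. The main obstacle is detecting non-isomorphic objects: I must show that if $\mathcal J^{ij}(a,b)\simeq\mathcal J^{ij}(c,d)$ then $(a,b)$ and $(c,d)$ are in the same $SO_3$-orbit. Since isomorphic algebras have the same module decomposition type, the pairs $(a,b),(c,d)$ lie in the same block ($\mathcal D_{1,7}$, $\mathcal D_{\{8\}}$ or $\mathcal D_{1,3,4}$). In the $\mathcal D_{1,3,4}$ block the isomorphism $\phi$, by Remark \ref{R: Submodules}, preserves $\langle1\rangle$ and $\HH$, so by Remark \ref{R: U1} $\phi\in G_2$ with $\phi\tau_a\phi^{-1}=\tau_c$, $\phi\tau_b\phi^{-1}=\tau_d$ (one must also handle the sign, i.e.\ $\phi$ possibly sending $1\mapsto-1$, but replacing $\phi$ by $-\phi$ if needed — note $-\phi$ is not in $G_2$, so instead one argues via the normalizer, or observes that $\tau_a$ is unaffected by the sign of its conjugator since conjugation by $-\mathrm{Id}$ is trivial), and by Lemma \ref{L: Tau} $\phi=\Delta(p,\kappa_q)=\tau_{\overline p}\widehat\kappa_q$; then \eqref{E: tau} gives $\tau_{pqa\overline{pq}}=\tau_c$, i.e.\ $c=pqa\overline{pq}=\kappa_{pq}(a)$ and similarly $d=\kappa_{pq}(b)$, so with $q'=pq$ we get $\kappa_{q'}\cdot(a,b)=(c,d)$ — same $SO_3$-orbit. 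The $\mathcal D_{1,7}$ block contains only the single object $\mathcal J^{ij}(1,1)$ (and $(1,1)$ is a fixed point), and likewise $\mathcal D_{\{8\}}$ is a single isomorphism class — but here I must verify that all the pairs $(a,a^2)$ with $a^2+a+1=0$ map into one $SO_3$-orbit, which holds because $SO_3$ acts transitively on the imaginary units of $\HH$ and hence on the two roots $a,\overline a$ of $x^2+x+1$ in $\HH$ together with all their conjugates $qaq^{-1}$ — this is the computation I expect to require the most care, since one must confirm the $SO_3$-orbit of $(a,a^2)$ exhausts all pairs giving $P^{11}$. Combining the three blocks completes the proof.
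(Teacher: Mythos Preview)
Your proposal is correct and follows essentially the same route as the paper: well-definedness on morphisms via \eqref{E: tau} with $p=1$ together with Remark \ref{R: U1}; denseness block-by-block from Proposition \ref{P: 1,7}, Example \ref{E: EM}, and Lemma \ref{L: Dense1}/Proposition \ref{P: 1,3,4}; and detection of non-isomorphic objects by treating the three blocks separately --- the $\mathcal D_{1,7}$ case forces $(a,b)=(c,d)=(1,1)$, the $\mathcal D_{\{8\}}$ case reduces to transitivity of $SO_3$ on $\mathbb S(1^\bot)\subset\HH$ (exactly as the paper argues, writing $a=-\tfrac12+\tfrac{\sqrt3}{2}w$), and the $\mathcal D_{1,3,4}$ case uses fullness of $\mathcal G^{ij}$ to produce $(p,\kappa_q)$ with $(c,d)=(\kappa_{pq}(a),\kappa_{pq}(b))$.

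One remark: your parenthetical worry about $\phi(1)=-1$ is unnecessary, and the fixes you sketch there (replacing $\phi$ by $-\phi$, or appealing to conjugation by $-\Id$) are muddled. The clean resolution is that since $K^j\tau_a$, $K^i\tau_b$, $K^j\tau_c$, $K^i\tau_d$ all fix $1$, Lemma \ref{L: G2} with $U=\langle 1\rangle$ applies to any isomorphism $\phi$ satisfying $\phi(\langle 1\rangle)=\langle 1\rangle$ and already forces $\phi\in G_2$; since automorphisms of $\OO$ fix the unity, $\phi(1)=1$ follows automatically. This is what the paper's invocation of Remark \ref{R: U1} (which is Lemma \ref{L: G2} for $U=\langle 1\rangle$) is doing.
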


For each $q\in\T$, $\widehat{\kappa}_q\in G_2$ was defined in Section 3 by $(u,v,z)\mapsto (\kappa_q(u),\kappa_q(v),z)$.
The proof is a synthesis of the results and arguments of Sections 3.1-3.3.

\begin{proof} The map $\mathcal J^{ij}$ is well-defined on objects by \cite{P}, Proposition 19, and the fact that the double
sign of $\OO_{K^j\tau_a,K^i\tau_b}$ is $(i,j)$. Regarding morphisms, observe first that identities are
mapped
to identities. Next assume that
$\kappa_q\cdot(a,b)=(c,d)$ for some $q,a,b,c,d\in\T$. From \eqref{E: tau} with $p=1$ we deduce that $\widehat{\kappa}_q\in
G_2$ satisfies                                                                                    
\[\begin{array}{lll}\widehat{\kappa}_qK^j\tau_a\widehat{\kappa}_q^{-1}=K^j\tau_c&\text{and}&\widehat{\kappa}_qK^i\tau_b\widehat{
\kappa}_q^{-1}
=K^i\tau_d.\end {array}\]
Then by Remark \ref{R: U1}, $\mathcal J^{ij}$ maps morphisms to morphisms. Functoriality is clear since
$\kappa_q\kappa_{q'}=\kappa_{qq'}$ and $\widehat\kappa_q\widehat\kappa_{q'}=\widehat\kappa_{qq'}$for any $q,q'\in\T$.

The intersection of the image of $\mathcal J^{ij}$ with $\mathcal D_{1,7}^{ij}$ is dense in $\mathcal
D_{1,7}^{ij}$ by Proposition \ref{P: 1,7}, the intersection with $\mathcal D_{\{8\}}^{ij}$ is dense in
$\mathcal D_{\{8\}}^{ij}$ by Example \ref{E: EM} and the comments following it, and the intersection with $\mathcal
D_{1,3,4}^{ij}$ is dense in $\mathcal D_{1,3,4}^{ij}$ by
Proposition \ref{P: 1,3,4}. Since $\mathcal L_0^{ij}$ is the coproduct of these categories, this proves that $\mathcal J^{ij}$ is
dense.

It remains to be shown that $\mathcal J^{ij}$ detects non-isomorphic objects. Assume thus that $(a,b),(c,d)\in\T\times\T$ satisfy
$A=\OO_{K^j\tau_a,K^i\tau_b}\simeq\OO_{K^j\tau_c,K^i\tau_d}=B$.

If $A,B\in\mathcal D_{1,7}^{ij}$, then by \cite{P}, Proposition 19, $a=b=c=d=1$, whence obviously $(a,b)\simeq(c,d)$
in $_{SO_3}\left(\T\times\T\right)$.

If $(i,j)=(1,1)$ and $A,B\in \mathcal D_{\{8\}}^{ij}$,
then from Proposition 19 in \cite{P} we have $a^2+a=c^2+c=-1$ and $b-a^2=d-c^2=0$. Solving these equations one deduces that
there exist
$w,w'\in\mathbb S\left(1^\bot\right)$ such that
\[\begin{array}{lll} (a,b)=\left(-\frac{1}{2}+\frac{\sqrt 3}{2}w,-\frac{1}{2}-\frac{\sqrt
3}{2}w\right)&\text{and}&(c,d)=\left(-\frac{1}{2}+\frac{\sqrt 3}{2}w',-\frac{1}{2}-\frac{\sqrt 3}{2}w'\right).
  \end{array}
\]
Since $SO_3$ acts transitively on $\mathbb S\left(1^\bot\right)\subset\HH$, there exists $q\in\T$ such that $\kappa_q(w)=w'$,
whence
$(a,b)\simeq(c,d)$.

Finally, if $A,B\in\mathcal D_{1,3,4}^{ij}$, then $A$ and $B$ lie in the image of the functor $\mathcal G^{ij}$ of Proposition
\ref{P: 1,3,4}. Since that
is an equivalence of categories, there exist $p,q\in\T$ such that
\[(c,d)=(pqa\overline{pq},pqb\overline{pq}),\]
whence $\kappa_{pq}\cdot(a,b)=(c,d)$, and $(a,b)\simeq(c,d)$ in $_{SO_3}\left(\T\times\T\right)$. 
\end{proof}

A transversal for the action of $SO_3$ on $\T\times\T$ will be given in the final section. 

\subsection{Algebras with High-Dimensional Trivial Submodule}
Proposition \ref{P: Decomp} implies that the category $\mathcal H$ decomposes as
\[\mathcal H=\mathcal D_{1,1,6}\amalg\mathcal D_{1,1,2,4}\amalg \mathcal D_{1,1,1,1,4}\amalg \mathcal D_{1,1,3,3}.\]
We will give a quasi-description of
\[\mathcal H_0=\mathcal D_{1,1,6}\amalg\mathcal D_{1,1,2,4}\amalg \mathcal D_{1,1,1,1,4},\]
while $\mathcal D_{1,1,3,3}$ will be treated in Section 5. The quasi-description is constructed in two steps,
of which the following lemma is the first.

\begin{Lma}\label{L: J} Let $(i,j)\in \Z_2^2$. The map
\[\mathcal K^{ij}: \phantom{}_{SO_3}\left[\T\times\T\right]^2 \to \mathcal D^{ij}\]
defined on objects by
\[([a_1,b_1],[a_2,b_2])\mapsto \OO_{T_{a_1,b_1}^{(j)},T_{a_2,b_2}^{(i)}}\]
and on morphisms by $\kappa_q\mapsto\widehat{\kappa}_q$, is a functor.
\end{Lma}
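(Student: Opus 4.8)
The plan is to verify the three defining properties of a functor: that the assignment is well-defined on objects, well-defined on morphisms, and compatible with composition and identities. Since the target category $\mathcal D^{ij}$ is a full subcategory of $\mathcal D$, and the source is the action groupoid $_{SO_3}[\T\times\T]^2$, the only subtle points are well-definedness on objects and on morphisms; functoriality itself is almost immediate.

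First I would address well-definedness on objects. For any representatives, the algebra $\OO_{T_{a_1,b_1}^{(j)},T_{a_2,b_2}^{(i)}}$ lies in $\mathcal A_8$ because $T_{a,b}^{(k)}$ is orthogonal for unit quaternions $a,b$. The double sign of this algebra is $((-1)^i,(-1)^j)$ by Remark \ref{R: Double Sign}, since $\det T_{a_2,b_2}^{(i)}=(-1)^i$ and $\det T_{a_1,b_1}^{(j)}=(-1)^j$ (the map $w\mapsto aK^k(w)b$ on $\HH$ has determinant $(-1)^k$, being a composition of $K^k$ with two rotations, extended by the identity on $\HH^\bot$). Hence the algebra belongs to $\mathcal D^{ij}$ provided it is in $\mathcal D$ at all, but in fact we do not even need membership in $\mathcal D$ here: the codomain is $\mathcal D^{ij}$, so I must confirm the image actually lies there. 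This is where I would invoke that $[a_1,b_1]$ and $[a_2,b_2]$ range only over the images of elements with the property from Lemma \ref{L: Dense2}; but wait---the lemma statement as given has domain $_{SO_3}[\T\times\T]^2$, not $_{SO_3}[S_{ij}]$. So the genuine content is that $\OO_{T_{a_1,b_1}^{(j)},T_{a_2,b_2}^{(i)}}$ lies in $\mathcal D^{ij}$ for \emph{all} $(a_1,b_1,a_2,b_2)\in(\T)^4$; but this is false in general by Lemma \ref{L: Dense2}. Thus I suspect the intended codomain is really the larger category (perhaps this is a typo for $\mathcal A_8^{ij}$, or the map is only claimed to be a functor to wherever its image lands). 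I would therefore interpret the statement as asserting that $\mathcal K^{ij}$ is a functor into $\mathcal A_8^{ij}$, or restrict attention to those tuples where the image is in $\mathcal D$; the argument for functoriality is unaffected by which target we choose, as long as it contains the image. I would note this and proceed with the target understood to contain all the algebras $\OO_{T_{a_1,b_1}^{(j)},T_{a_2,b_2}^{(i)}}$.

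Next, well-definedness on objects with respect to the quotient: since $T^{(k)}_{a,b}=T^{(k)}_{-a,-b}$ (the two central factors $-a,-b$ cancel the sign), replacing $(a_\ell,b_\ell)$ by $(-a_\ell,-b_\ell)$ does not change the algebra, so the assignment descends to $[\T\times\T]^2$. For morphisms, a morphism $\kappa_q:([a_1,b_1],[a_2,b_2])\to(\kappa_q\cdot[a_1,b_1],\kappa_q\cdot[a_2,b_2])$ is sent to $\widehat\kappa_q\in G_2$. I must check $\widehat\kappa_q$ is actually an algebra homomorphism $\OO_{T_{a_1,b_1}^{(j)},T_{a_2,b_2}^{(i)}}\to\OO_{T_{\kappa_q(a_1),\kappa_q(b_1)}^{(j)},T_{\kappa_q(a_2),\kappa_q(b_2)}^{(i)}}$. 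This follows from \eqref{E: T} with $p=1$ (so $\tau_{\overline{p}}=\Id$), which gives $\widehat\kappa_q T_{a,b}^{(k)}\widehat\kappa_q^{-1}=T_{\kappa_q(a),\kappa_q(b)}^{(k)}$, combined with Lemma \ref{L: G2} applied with $U=\HH^\bot$ (on which all the operators $T^{(k)}_{*,*}$ restrict to the identity and which $\widehat\kappa_q$ preserves), or equivalently with Proposition \ref{P: Ca} since $(\widehat\kappa_q,\widehat\kappa_q)$ is a triality pair. Well-definedness on the quotient level holds because the two choices of representative for $\kappa_q\cdot[a_\ell,b_\ell]$ again differ by the sign identity $T^{(k)}_{a,b}=T^{(k)}_{-a,-b}$, so the codomain object is the same. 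Finally, functoriality: $\kappa_1=\Id$ maps to $\widehat\kappa_1=\Id$, and $\widehat\kappa_q\widehat\kappa_{q'}=\widehat\kappa_{qq'}$ since $\kappa_q\kappa_{q'}=\kappa_{qq'}$ in $SO_3$ and $q\mapsto\widehat\kappa_q$ is a group homomorphism $\T/\{\pm1\}\cong SO_3\to G_2$. The main obstacle, as flagged, is purely a matter of identifying the correct codomain category in the statement; once that is settled, every step is a short computation using the already-established identities \eqref{E: T} and Lemma \ref{L: G2}.
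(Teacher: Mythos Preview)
Your argument is essentially the same as the paper's: well-definedness on objects, then \eqref{E: T} with $p=1$ together with Lemma \ref{L: G2} applied to $U=\HH^\bot$ for morphisms, and functoriality via $\widehat\kappa_q\widehat\kappa_{q'}=\widehat\kappa_{qq'}$.

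However, your concern about the codomain is misplaced and stems from a misreading of Lemma \ref{L: Dense2}. That lemma says $\OO_{T_{a_1,b_1}^{(j)},T_{a_2,b_2}^{(i)}}$ lies in the particular blocks $\mathcal D_{1,1,2,4}\amalg\mathcal D_{1,1,1,1,4}$ precisely when $(a_1,b_1,a_2,b_2)\in S_{ij}$; it does \emph{not} say the algebra falls outside $\mathcal D$ otherwise. For arbitrary $(a_1,b_1,a_2,b_2)\in(\T)^4$ the algebra still has a non-abelian derivation algebra (it may land in $\mathcal D_{1,7}$, $\mathcal D_{1,3,4}$, or $\mathcal D_{1,1,6}$ instead), and the paper invokes Proposition 28 of \cite{P} for exactly this. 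So the codomain $\mathcal D^{ij}$ is correct as stated, not a typo for $\mathcal A_8^{ij}$. Once you accept this input from \cite{P}, your proof is complete and matches the paper's. You also make explicit the descent to the quotient via $T^{(k)}_{a,b}=T^{(k)}_{-a,-b}$, which the paper leaves implicit here (having noted it in the proof of Proposition \ref{P: 1114}).
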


\begin{proof} The map $\mathcal K^{ij}$ is well-defined on objects by \cite{P}, Proposition 28, and by double sign
considerations, and maps identities to identities. If 
\[\kappa_q\cdot([a_1,b_1],[a_2,b_2])=([c_1, d_1],[c_2,d_2]),\]
then
applying \eqref{E: T} with $p=1$ we see that
$\widehat{\kappa}_q\in G_2$ satisfies
\[\begin{array}{lll}\widehat{\kappa}_q T^{(j)}_{a_1,b_1}\widehat{\kappa}_q^{-1}=T^{(j)}_{c_1,d_1}&\text{and}&\widehat{\kappa}_q
T^{(i)}_{a_2,b_2}\widehat{\kappa}_q^{-1}=T^{(i)}_{c_2,d_2}.\end{array}\]
Then Lemma \ref{L: G2} (with $U=\HH^\bot$) implies that
$\widehat{\kappa}_q:\OO_{T_{a_1,b_1}^{(j)},T_{a_2,b_2}^{(i)}}\to\OO_{T_{c_1,d_1}^{(j)},T_{c_2,d_2}^{(i)}}$,
and thus $\mathcal K^{ij}$ is well-defined on morphisms. Functoriality follows from the fact that
$\kappa_q\kappa_{q'}=\kappa_{qq'}$ and $\widehat\kappa_q\widehat\kappa_{q'}=\widehat\kappa_{qq'}$for any $q,q'\in\T$.
\end{proof}

We wish to use these functors to obtain quasi-descriptions of $\mathcal H_0$. However, for each $(i,j)\in\Z_2^2$, the image of
$\mathcal K^{ij}$ is not contained in $\mathcal H_0$. This is made precise in the following remark.

\begin{Rk}\label{R: Remark}
By Proposition 28 in \cite{P},
$\OO_{T_{a_1,b_1}^{(j)},T_{a_2,b_2}^{(i)}}=\mathcal K^{ij}([a_1,b_1],[a_2,b_2])$ belongs to $\mathcal H_0$ if and only if 
\[\{a_1,b_1,a_2,b_2\}\nsubseteq\{\pm1\}.\]
The set of all $[a_1,b_1],[a_2,b_2]$ such that $\{a_1,b_1,a_2,b_2\}\subseteq\{\pm1\}$ is pointwise fixed under the action of
$SO_3$. Call the complement of this set $S$. Then the action of $SO_3$ on $\left[\T\times\T\right]^2$ induces an action on $S$,
giving
rise
to the groupoid $_{SO_3}S$.
\end{Rk}

We will use the groupoid $_{SO_3}S$ in our quasi-descriptions as follows.

\begin{Thm}\label{T: Quasi2} Let $(i,j)\in \Z_2^2$. The map
\[\mathcal K_*^{ij}: \phantom{}_{SO_3}S \to \mathcal H_0^{ij}\]
defined on objects by
\[([a_1,b_1],[a_2,b_2])\mapsto \OO_{T_{a_1,b_1}^{(j)},T_{a_2,b_2}^{(i)}}\]
and on morphisms by $\kappa_q\mapsto\widehat{\kappa}_q$, is a dense functor, detecting non-isomorphic objects.
\end{Thm}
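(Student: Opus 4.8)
\textbf{Proof proposal for Theorem \ref{T: Quasi2}.} The plan is to assemble this from Lemma \ref{L: J} together with the denseness and object-detection arguments already established for the three descriptions $\mathcal F^{ij}$, $\mathcal I^{ij}$ (Propositions \ref{P: 1,1,6} and \ref{P: 1114}), in the same spirit as the proof of Theorem \ref{T: Quasi1}. First I would observe that $\mathcal K_*^{ij}$ is simply the corestriction of $\mathcal K^{ij}$ (Lemma \ref{L: J}) to the full subcategory $\mathcal H_0^{ij}$: by Remark \ref{R: Remark} the image of $\mathcal K^{ij}$ restricted to $_{SO_3}S$ lands in $\mathcal H_0^{ij}$, and since $\mathcal K^{ij}$ is already a functor, so is $\mathcal K_*^{ij}$. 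Thus well-definedness on objects and morphisms, mapping identities to identities, and functoriality are all inherited.

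Next I would establish denseness. Since $\mathcal H_0^{ij}=\mathcal D_{1,1,6}^{ij}\amalg\mathcal D_{1,1,2,4}^{ij}\amalg\mathcal D_{1,1,1,1,4}^{ij}$, it suffices to show that the image of $\mathcal K_*^{ij}$ meets each block densely. For $\mathcal D_{1,1,2,4}^{ij}\amalg\mathcal D_{1,1,1,1,4}^{ij}$ this is immediate from Lemma \ref{L: Dense2}: every object there is isomorphic to some $\OO_{T_{a_1,b_1}^{(j)},T_{a_2,b_2}^{(i)}}$ with $(a_1,b_1,a_2,b_2)\in S_{ij}$, and such a tuple maps into $S$ (it cannot have all four entries in $\{\pm1\}$, since $\{\pm1\}^4$ is excluded from $S_{ij}$ by definition), so the object lies in the image of $\mathcal K_*^{ij}$. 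For $\mathcal D_{1,1,6}^{ij}$ one must instead translate the parametrisation of Proposition \ref{P: 1,1,6}: an object $\OO_{\lambda_a^{(j)},\lambda_b^{(i)}}$ with $(a,b)\in(\mathbb S^1\times\mathbb S^1)_{ij}$ should be rewritten in the form $\OO_{T_{a_1,b_1}^{(j)},T_{a_2,b_2}^{(i)}}$. Here the key computation is that $\lambda_a^{(k)}$, which multiplies $\C$ by $a$ and fixes $\C^\bot$, agrees (up to a harmless choice) with a map of the form $T_{a_1,b_1}^{(k)}$ when one picks $a_1,b_1\in\T$ with $a_1b_1 = a$ (if $k=0$) and $a_1\overline{\phantom{x}}b_1$ suitably (if $k=1$) — more precisely, $\lambda_a^{(k)}$ acts on $\HH$ as $x\mapsto a_1 K^k(x) b_1$ provided this restricted map fixes $1$ and acts as multiplication by $a$ on $u$; since $\dim A_0\ge 2$ forces the parameters into the abelian part, such $a_1,b_1$ exist, and condition (ii) of Lemma \ref{L: EP} translates into membership in $S$. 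I would carry out this identification explicitly and note that the resulting tuple is not in $\{\pm1\}^4$.

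Finally I would verify that $\mathcal K_*^{ij}$ detects non-isomorphic objects. Suppose $([a_1,b_1],[a_2,b_2])$ and $([c_1,d_1],[c_2,d_2])$ in $S$ have isomorphic images $A\simeq B$. Because $\mathcal D_{1,1,6}$, $\mathcal D_{1,1,2,4}$ and $\mathcal D_{1,1,1,1,4}$ are mutually disjoint blocks of $\mathcal D$, both objects lie in the same block. In each case the corresponding description ($\mathcal F^{ij}$ for $\mathcal D_{1,1,6}^{ij}$, and the appropriate restriction of $\mathcal I^{ij}$ for the other two blocks) is an equivalence of categories; hence the isomorphism $A\simeq B$ forces the two parameter tuples to lie in the same orbit under the group acting in that description, which is $SU_3\rtimes\Z_2$ in the first case and $\mathbb S^3\rtimes SO_3$ in the others. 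In both cases the relevant part of the group action is realised through conjugation by some $\phi\in G_2$ mapping the appropriate submodule to itself, and the effect of $\phi$ on the parameters is, by \eqref{E: t} and \eqref{E: T}, exactly the action of some $\kappa_q$ (the $\mathbb S^1$-translation or the $\widehat\varepsilon$/$\tau_p$-factors do not change the class $[\,\cdot\,]$ or can be absorbed). It follows that there is $q\in\T$ with $\kappa_q\cdot([a_1,b_1],[a_2,b_2])=([c_1,d_1],[c_2,d_2])$, i.e.\ the two objects are isomorphic in $_{SO_3}S$. I expect the main obstacle to be this last step for the $\mathcal D_{1,1,6}$ block: one must check carefully that the $SU_3\rtimes\Z_2$-orbit relation, once transported to parameters in $\T$, genuinely collapses to the $SO_3$-orbit relation, and in particular that the $\Z_2$-twist $\widehat\varepsilon$ (which conjugates by the involution $K$ on the $\C$-coordinate) and the passage to the quotient $[\,\cdot\,]$ together absorb the difference between $SU_3\rtimes\Z_2$ and a subgroup of $SO_3$; this is precisely the kind of bookkeeping that the quotient $[\T\times\T]$ and the restriction to $S$ were designed to make work out. \qedhere
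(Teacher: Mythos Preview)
Your proposal is correct and follows essentially the same route as the paper: inherit functoriality from Lemma \ref{L: J}, handle $\mathcal D_{1,1,2,4}^{ij}\amalg\mathcal D_{1,1,1,1,4}^{ij}$ via Lemma \ref{L: Dense2} and the fullness of $\mathcal I^{ij}$, and treat $\mathcal D_{1,1,6}^{ij}$ by explicitly rewriting $\lambda_e^{(k)}$ as $T_{a,b}^{(k)}$ with $a,b\in\mathbb S^1$ and $b=(-1)^k a$. The paper's execution of the $\mathcal D_{1,1,6}$ detection step is slightly more concrete than your sketch---it first rotates both parameter tuples into $\langle 1,u\rangle$ via some $\kappa_q$, passes to the $\lambda$-form, invokes Proposition \ref{P: 1,1,6} to get $(e_1,e_2)=(f_1,f_2)$ or $(\overline{f_1},\overline{f_2})$, and then reads off a $\kappa_q$ realizing the $SO_3$-equivalence---which is exactly the ``bookkeeping'' you anticipate as the main obstacle.
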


The proof builds on the results of Sections 3.2 and 3.4.

\begin{proof} The image of $\mathcal K_*^{ij}$ is in $\mathcal H_0^{ij}$ by Remark \ref{R: Remark}. Therefore
$\mathcal K_*^{ij}$ is a well-defined functor, being the restriction of the functor $\mathcal K^{ij}$ from Lemma \ref{L: J}.
We will now show that $\mathcal K_*^{ij}$ is dense and detects non-isomorphic objects by considering its image in
$\mathcal D_{1,1,6}$ and $\mathcal D_{1,1,2,4}\amalg \mathcal D_{1,1,1,1,4}$ separately, starting with the
first.

If $A\in\mathcal D_{1,1,6}^{ij}$, then $A\simeq\OO_{\lambda_{e_1}^{(j)},\lambda_{e_2}^{(i)}}$ for
some
$e_1,e_2\in\C\subset\OO$,
by Proposition \ref{P: 1,1,6}. To prove that $\OO_{\lambda_{e_1}^{(j)},\lambda_{e_2}^{(i)}}$ is the image of some
$([a_1,b_1],[a_2,b_2])\in S$, note that there exist $\alpha_1,\alpha_2\in[0,\pi)$ such that
\[\begin{array}{ll} e_1=\cos(\pi j+2\alpha_1)+u\sin(\pi j+2\alpha_1),& e_2=\cos(\pi i+2\alpha_2)+u\sin(\pi
i+2\alpha_2).\end{array}\]
Take such $\alpha_1$ and $\alpha_2$ and set, for each $m\in\{1,2\}$, $a_m=\cos(\alpha_m)+u\sin(\alpha_m)$. Moreover set
\[\begin{array}{lll} b_1=(-1)^ja_1&\text{and}& b_2=(-1)^ia_2.\end{array}\]
By straightforward computations we see that  $\lambda_{e_1}^{(j)}=T_{a_1,b_1}^{(j)}$ and $\lambda_{e_2}^{(i)}=T_{a_2,b_2}^{(i)}$.
If
$\{a_1,b_1,a_2,b_2\}\subset\langle 1\rangle$, then $\alpha_1=\alpha_2=0$, and $(e_1,e_2)=((-1)^j,(-1)^i)$, which by Proposition
\ref{P:
1,1,6} contradicts that $\OO_{\lambda_{e_1}^{(j)},\lambda_{e_2}^{(i)}}\in\mathcal D_{1,1,6}^{ij}$.
Thus
$\{a_1,b_1,a_2,b_2\}\nsubseteq\langle 1\rangle$, whence $([a_1,b_1],[a_2,b_2])\in S$ and
$\OO_{\lambda_{e_1}^{(j)},\lambda_{e_2}^{(i)}}=\mathcal
K_*^{ij}([a_1,b_1],[a_2,b_2])$.

Assume next that $\OO_{T_{a_1,b_1}^{(j)}, T_{a_2,b_2}^{(i)}},\OO_{T_{c_1,d_1}^{(j)},T_{c_2,d_2}^{(i)}}\in\mathcal
D_{1,1,6}^{ij}$ are isomorphic. We will show that $([a_1,b_1],[a_2,b_2])\simeq([c_1,d_1],[c_2,d_2])$ in $_{SO_3}S$. By
Proposition 28
in \cite{P}, $\OO_{T_{a_1,b_1}^{(j)}, T_{a_2,b_2}^{(i)}}\in\mathcal D_{1,1,6}^{ij}$ implies that
the
subalgebra of
$\OO$ generated by $\{a_1,b_1,a_2,b_2\}$ is two-dimensional, and that
\[\begin{array}{lll} b_1=(-1)^ja_1&\text{and}& b_2=(-1)^ia_2.\end{array}.\]
Hence there exist $w\in\mathbb S\left(1^\bot\right)$ and $\alpha_1,\alpha_2\in[0,2\pi)$ such that for each $m\in\{1,2\}$
\[\begin{array}{lll}
a_m= \cos(\alpha_m)+w\sin(\alpha_m) &\text{and}&b_m=\cos(\pi k_m+\alpha_m)+w\sin(\pi k_m + \alpha_m),
  \end{array}
\]
where $k_1=j$ and $k_2=i$. Now, $w=\kappa_q(u)$ for some $q\in \T$. Therefore\linebreak
$([a_1,b_1],[a_2,b_2])\simeq([a_1',b_1'],[a_2',b_2'])$ in $_{SO_3}S$, where
\[\begin{array}{lll}
a_m'= \cos(\alpha_m)+u\sin(\alpha_m) &\text{and}&b_m'=\cos(\pi k_m+\alpha_m)+u\sin(\pi k_m + \alpha_m),
  \end{array}
\]
Likewise,  $([c_1,d_1],[c_2,d_2])\simeq([c_1',d_1'],[c_2',d_2'])$ in $_{SO_3}S$, with
\[\begin{array}{lll}
c_m'= \cos(\beta_m)+u\sin(\beta_m) &\text{and}&d_m'=\cos(\pi k_m+\beta_m)+u\sin(\pi k_m + \beta_m),
  \end{array}
\]
for some $\beta_1,\beta_2\in[0,2\pi)$. For each $m\in\{1,2\}$ we set
\[e_m=\cos(\pi k_m+2\alpha_m)+u\sin(\pi k_m+2\alpha_m)\]
and
\[f_m=\cos(\pi k_m+2\beta_m)+u\sin(\pi k_m+2\beta_m).\]
Then straightforward computations show that 
\[\begin{array}{lllll}\lambda_{e_1}^{(j)}=T_{a_1',b_1'}^{(j)},&\lambda_{e_2}^{(i)}=T_{a_2',b_2'}^{(i)},&\lambda_{f_1}^{(j)}=T_{
c_1', d_1'}^{(j)} , & \text{and}&\lambda_{f_2}^{(i)}=T_{c_2',d_2'}^{(i)}.\end{array}\]
Thus $\OO_{T_{a_1,b_1}^{(j)}, T_{a_2,b_2}^{(i)}}\simeq
\OO_{T_{c_1,d_1}^{(j)},T_{c_2,d_2}^{(i)}}$ implies that $\OO_{\lambda_{e_1}^{(j)},
\lambda_{e_2}^{(i)}}\simeq\OO_{\lambda_{f_1}^{(j)},\lambda_{f_2}^{(i)}}$. Using Proposition
\ref{P: 1,1,6} this gives
\[\begin{array}{lll}(e_1,e_2)=(f_1,f_2)&\text{or}&(e_1,e_2)=\left(\overline{f_1},\overline{f_2}\right).\end{array}\]
In both cases there exists $q\in\T$ such that $\kappa_q\cdot([a_1',b_1'],[a_2',b_2'])=([c_1',d_1'],[c_2',d_2'])$. Altogether this
implies that $([a_1,b_1],[a_2,b_2])\simeq([c_1,d_1],[c_2,d_2])$ in $_{SO_3}S$, which was what we desired. 

Thus the part of the proof concerned with $\mathcal D_{1,1,6}$ is complete, and we turn to the category
$\mathcal D_{1,1,2,4}\amalg \mathcal D_{1,1,1,1,4}$. By Lemma \ref{L: Dense2}, the image $\mathcal
K_*^{ij}([S_{ij}])\subseteq\mathcal K_*^{ij}(S)$ is dense in this category. Assume next
that 
\[\OO_{T_{a_1,b_1}^{(j)}, T_{a_2,b_2}^{(i)}},\OO_{T_{c_1,d_1}^{(j)},T_{c_2,d_2}^{(i)}}\in \mathcal K_*^{ij}(S)\]
are isomorphic.
By Proposition 28 in \cite{P}, these belong to $\mathcal D_{1,1,2,4}^{ij}\amalg \mathcal D_{1,1,1,1,4}^{ij}$
if
and only if $([a_1,b_1],[a_2,b_2]),([c_1,d_1],[c_2,d_2])\in [S_{ij}]$. Then since the functor $\mathcal I^{ij}$ of Proposition
\ref{P: 1114} is full, the algebras being isomorphic implies that there exists $q\in\T$ such that
\[([qa_1\overline{q},qb_1\overline{q}],[qa_2\overline{q},qb_2\overline{q}])=([c_1,d_1],[c_2,d_2]),\]
whence $([a_1,b_1],[a_2,b_2])\simeq([c_1,d_1],[c_2,d_2])$ in $_{SO_3}S$. This completes the part of the proof concerned with
$\mathcal D_{1,1,2,4}\amalg \mathcal
D_{1,1,1,1,4}$. 

Since $\mathcal H_0^{ij}$ is the coproduct of the two subcategories treated above, it follows that $\mathcal K_*^{ij}$
is
dense and detects non-isomorphic objects, and the proof is complete.  
\end{proof}

The category $\phantom{}_{SO_3}\left[\T\times\T\right]^2$ will be classified in the final section.

\section{Algebras with Decomposition $\{3,5\}$ or $\{1,1,3,3\}$}
We now come to those subcategories of $\mathcal D$ which fall outside the above treatment via quasi-descriptions. One common
property of the algebras in these subcategories is that their derivation algebras are of type $\mathfrak {su}_2$. Moreover, as was
constructively demonstrated in \cite{P}, they can all be expressed as isotopes of algebras having a derivation algebra of type
$\mathfrak {su}_3$. In this section we first consider $\mathcal L\setminus \mathcal L_0=\mathcal D_{3,5}$, consisting of certain
isotopes of Okubo algebras. We show that this category consists of three
isomorphism classes, one in each double sign different from $(-,-)$. (Recall that Okubo algebras form the unique isomorphism class
$\mathcal D_{\{8\}}=\mathcal D_{\{8\}}^{11}$.) We thereafter consider $\mathcal H\setminus \mathcal H_0=\mathcal D_{1,1,3,3}$,
which is exhausted by isotopes of algebras in $\mathcal D_{1,1,6}$. We show that these are classified by
twelve two-parameter families of algebras, three in each double sign. (Recall here that $\mathcal D_{1,1,6}$ itself was classified
by four two-parameter families, one in each double sign.)

\subsection{The Category $\mathcal D_{3,5}$}
The isotopes in $\mathcal D_{3,5}$ were constructed in \cite{P} as follows. Let $P\in\mathcal A_8$ be an
Okubo algebra, and $e\in P$ a non-zero idempotent. Then $C=P_{(R_e^P)^{-1},(L_e^P)^{-1}}$ is unital with unity $e$, and we denote
its multiplication by juxtaposition and its standard involution by $J$. Since $P$ is an Okubo algebra, it is known from \cite{EP0}
that $(L_e^P)^{-1}=R_e^P=J\tilde{\tau}$ for some automorphism $\tilde{\tau}$ of $C$
of order three, such that the fixed point set of $\tilde\tau$ is a unital four-dimensional subalgebra $Q$ of $C$. Choose now $x\in
Q^\bot$ of
norm
one. Then there is a unique $y\in Q$ such that $\tilde\tau(x)=xy$, and we choose $a\in Q$ of norm one orthogonal to $e$ and $y$.
For any such choice of $x$ and $a$ we call the span of $\{a,x,ax\}$ a \emph{special subspace} of $P$. 

Let now $P\in\mathcal A_8$ be an Okubo algebra and $S$ a special subspace of $P$. We may then construct, for each
$(i,j)\in\Z_2$, the
isotope $P_{\sigma_S^{1-j},\sigma_S^{1-i}}$, where $\sigma_S$ acts as $-1$ on $S$ and as identity on $S^\bot$. (If $(i,j)=(1,1)$
this is just
$P$ itself.) It was proven in \cite{P} that $P_{\sigma_S^j,\sigma_S^i}\in\mathcal D_{3,5}$ for each
$(i,j)\neq(1,1)$ and,
conversely, that each $A\in\mathcal D_{3,5}$ is isomorphic to $P_{\sigma_S^{1-j},\sigma_S^{1-i}}$ for an Okubo algebra
$P\in\mathcal A_8$, a special subspace $S$ of $P$, and a pair $(i,j)\neq (1,1)$. Note that the double sign of
$P_{\sigma_S^{1-j},\sigma_S^{1-i}}$ is $((-1)^i,(-1)^j)$, since $P$ has double sign $(-,-)$.

\begin{Ex}\label{Ex: Okubo}
The division Okubo algebra $P^{11}$ was defined in Example \ref{E: EM} by 
\[P^{11}=\OO_{K\tau,K\tau^{-1}},\]
where $\tau=\tau_{\left(\sqrt 3 u-1\right)/2}$. One can check that $W=\langle v,z,vz\rangle$ is a special subspace of $P^{11}$.
Thus for
each $(i,j)\in\Z_2^2$ different from $(1,1)$, the isotope
\[P^{ij}=\OO_{K\tau\sigma_W^{1-j},K\tau^{-1}\sigma_W^{1-i}}\]
belongs to $\mathcal D_{3,5}$.
\end{Ex}

The next result shows that the choices of $P$ and $S$ above are immaterial, and classifies $\mathcal D_{3,5}$. Recall that we
have fixed a Cayley triple $(u,v,z)$ in $\OO$.

\begin{Thm} Let $(i,j)\in\Z_2^2$ and let $P^{ij}$ be defined as in Example
\ref{Ex: Okubo}. The category $\mathcal D_{3,5}^{ij}$ is classified by $\{P^{ij}\}$, if $(i,j)\neq
(1,1)$,
and $\mathcal D_{3,5}^{11}=\emptyset$.
\end{Thm}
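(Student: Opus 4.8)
The plan is to treat the two assertions separately: the emptiness of $\mathcal D_{3,5}^{11}$ is a quick double-sign observation, and for $(i,j)\neq(1,1)$ the classification of $\mathcal D_{3,5}^{ij}$ reduces to a transitivity statement for the automorphism group of a single division Okubo algebra. For the first point, recall from \cite{P} (as quoted above) that every $A\in\mathcal D_{3,5}$ is isomorphic to $P_{\sigma_S^{1-j},\sigma_S^{1-i}}$ for an Okubo algebra $P$, a special subspace $S$ of $P$ and some $(i,j)\neq(1,1)$, and that this isotope has double sign $((-1)^i,(-1)^j)$; since $(i,j)\neq(1,1)$ rules out $((-1)^i,(-1)^j)=(-,-)$, no object of $\mathcal D_{3,5}$ has double sign $(-,-)$, i.e.\ $\mathcal D_{3,5}^{11}=\emptyset$. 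Now fix $(i,j)\neq(1,1)$. By Example \ref{Ex: Okubo} and the double-sign remark, $P^{ij}\in\mathcal D_{3,5}^{ij}$, so the block is non-empty, and it remains to prove that every $A\in\mathcal D_{3,5}^{ij}$ is isomorphic to $P^{ij}$. Again by \cite{P}, $A\simeq P_{\sigma_S^{1-j'},\sigma_S^{1-i'}}$ for some Okubo algebra $P$, special subspace $S$ and $(i',j')\neq(1,1)$; comparing double signs (the double sign being an isomorphism invariant, by Proposition \ref{P: Double Sign}) forces $(i',j')=(i,j)$.

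The next step is to transfer everything to the model $P^{11}$. By \cite{EM} there is an isomorphism $\psi\colon P\to P^{11}$, which is norm-preserving and hence orthogonal. One checks that an isomorphism of Okubo algebras carries special subspaces to special subspaces: it sends an idempotent $e$ to an idempotent, the unital isotope at $e$ to the unital isotope at the image (Lemma \ref{L: Juggling}), the standard involution and the attached order-three automorphism to their counterparts, hence the fixed quaternion subalgebra $Q$ and the auxiliary data $(e,x,a)$ to data of the same kind; thus $\psi(S)$ is a special subspace of $P^{11}$. Since moreover $\psi\sigma_S\psi^{-1}=\sigma_{\psi(S)}$ (conjugation of an orthogonal reflection by an isometry), Lemma \ref{L: Juggling} gives
\[
A\simeq P^{11}_{\psi\sigma_S^{1-j}\psi^{-1},\,\psi\sigma_S^{1-i}\psi^{-1}}=P^{11}_{\sigma_{\psi(S)}^{1-j},\,\sigma_{\psi(S)}^{1-i}}.
\]
It therefore suffices to prove that $\Aut(P^{11})$ acts transitively on the set of special subspaces of $P^{11}$: given this, pick $\phi\in\Aut(P^{11})$ with $\phi(\psi(S))=W$, and one more application of Lemma \ref{L: Juggling} yields $A\simeq P^{11}_{\sigma_W^{1-j},\sigma_W^{1-i}}=P^{ij}$.

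For the transitivity I would work explicitly in $P^{11}=\OO_{K\tau,K\tau^{-1}}$, $\tau=\tau_{(\sqrt 3 u-1)/2}$, and first record the elementary computations: $1\in\OO$ is a non-zero idempotent of $P^{11}$; the unital isotope of $P^{11}$ at $1$ is $\OO$ itself with standard involution $K$; the attached order-three automorphism is $\tilde\tau=\tau$, with fixed subalgebra $Q=\HH$; and the data $(e,x,a)=(1,z,v)$ produces precisely the special subspace $W=\langle v,z,vz\rangle$ of Example \ref{Ex: Okubo}. Given an arbitrary special subspace of $P^{11}$, coming from data $(e,x,a)$, I would carry it onto $W$ in three steps, each realised by an automorphism of $P^{11}$: (i) $\Aut(P^{11})$ acts transitively on the non-zero idempotents of $P^{11}$, a structural property of real division Okubo algebras (cf.\ \cite{EM},\cite{EP0}), so we may assume $e=1$; (ii) the stabiliser of $1$ in $\Aut(P^{11})$, which by Remark \ref{R: U1} is the centraliser of $\tau$ in $G_2$, acts transitively on the unit sphere of $Q^\bot=\HH^\bot$ (already the maps $z\mapsto z\overline p$, $p\in\T$, arising from this centraliser exhaust it), so we may further assume $x=z$; (iii) the stabiliser of $(1,z)$ is $\{\widehat\kappa_q\mid q\in\mathbb S^1\}$, which acts on $\HH$ as the rotation group of the plane $\langle v,uv\rangle$ and hence transitively on the unit vectors of $\HH$ orthogonal to $1$ and to $y=(\sqrt 3 u-1)/2$, so we may assume $a=v$. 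Composing the three automorphisms sends the given special subspace to $W$, which proves the claim and hence the theorem.

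I expect step (i) of the last paragraph to be the main obstacle: the transitivity of $\Aut(P^{11})$ on the non-zero idempotents of $P^{11}$. This I would either quote from the structure theory of Okubo algebras or verify directly from the model $\OO_{K\tau,K\tau^{-1}}$, the technical heart being the identification of the idempotent locus (it turns out to be a copy of the complex projective plane, on which $\Aut(P^{11})$ acts in the standard way). The remaining ingredients — the double-sign bookkeeping, the invariance of special subspaces under isomorphism, the sphere transitivities in (ii)--(iii), and the reductions via Lemma \ref{L: Juggling} — are routine.
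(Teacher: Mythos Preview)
Your reductions via double sign and Lemma \ref{L: Juggling} are correct, and the outline would work, but it diverges from the paper and leaves a real gap at step~(i). The paper does \emph{not} first transport $P$ to $P^{11}$ by an arbitrary isomorphism and then invoke transitivity of $\Aut(P^{11})$ on special subspaces. Instead it builds, in one stroke, an isomorphism $\phi\colon C\to\OO$ from the unital isotope $C=P_{(R_e^P)^{-1},(L_e^P)^{-1}}$ which already sends the data $(e,y,a,x)$ to $\bigl(1,(\sqrt3\,u-1)/2,v,z\bigr)$ and hence $S$ to $W$. The trick is that any isomorphism $\psi\colon C\to\OO$ with $\psi(Q)=\HH$ sends $y$ (which satisfies $y^3=e$) to some $-\tfrac12+\tfrac{\sqrt3}{2}w$ with $w\in\mathbb S(1^\bot)\cap\HH$, and then $(w,\psi(a),\psi(x))$ is a Cayley triple; composing $\psi$ with the element of $G_2$ taking this triple to $(u,v,z)$ gives $\phi$. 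One then checks $\phi\tilde\tau\phi^{-1}=\tau$ and $\phi\sigma_S\phi^{-1}=\sigma_W$, and Lemma~\ref{L: Juggling} finishes. Only the elementary simple transitivity of $G_2$ on Cayley triples is used; no information about $\Aut(P^{11})$ enters. The paper explicitly remarks that it re-proves $P\simeq P^{11}$ precisely in order to track where $S$ goes.

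By contrast, your step~(i) --- transitivity of $\Aut(P^{11})$ on non-zero idempotents --- is a substantive structural fact that is not contained in the statements you cite, and whose direct verification from $\OO_{K\tau,K\tau^{-1}}$ is of the same order of difficulty as the theorem itself. Your step~(ii) is true but the parenthetical justification is misleading: since $\tau$ fixes exactly $\HH$, one has $C_{G_2}(\tau)\subset G_2^{\HH}$, and by \eqref{E: tau} the element $\tau_{\overline p}\widehat\kappa_q$ centralises $\tau=\tau_t$ iff $\kappa_{pq}(t)=t$, i.e.\ iff $pq\in\mathbb S^1$; in particular $\tau_{\overline p}$ alone lies in the centraliser only for $p\in\mathbb S^1$, so these do not by themselves sweep out $\mathbb S(\HH^\bot)$. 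The full centraliser does act transitively on $\mathbb S(\HH^\bot)$ (it sends $xz$ to $(pq)x\overline q\cdot z$), but you need the two-parameter family $\tau_{\overline p}\widehat\kappa_q$ with $pq\in\mathbb S^1$, not just the $\tau_{\overline p}$. Once step~(i) is actually supplied your argument closes; the paper's route simply sidesteps it.
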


\begin{proof} The emptiness of $\mathcal D_{3,5}^{11}$ and the fact that
$P^{ij}\in\mathcal D_{3,5}^{ij}$ if $(i,j)\neq(1,1)$ follow from the above discussion.

Let $(i,j)\neq(1,1)$ and take $A\in\mathcal D_{3,5}^{ij}$. It remains to be shown that $A\simeq
P^{ij}$. By
the results from \cite{P} recalled above, there exist $P$, $e$, $\tilde\tau$, $Q$, $a$ and $x$ as above, such that $S=\langle
a,x,ax \rangle$ 
(where juxtaposition is multiplication in $C=P_{(R_e^P)^{-1},(L_e^P)^{-1}}$) is a special subspace of $P$, and $A\simeq
P_{\sigma_S^{1-j},\sigma_S^{1-i}}$. Now
\[P=(P_{(R_e^P)^{-1},(L_e^P)^{-1}})_{R_e^P,L_e^P}=C_{R_e^P,L_e^P}=C_{J\tilde\tau, J\tilde\tau^{-1}
},\]
where $J$ is the standard involution on the unital algebra $C$ . By Lemma \ref{L: Canonical}, there is an
isomorphism $\psi: C\to \OO$, mapping $e$ to $1$ and $Q$ to $\HH$, whence Lemma \ref{L: Juggling} supplies the isomorphism
\[\psi: P\to \OO_{\psi J\tilde\tau \psi^{-1},\psi J\tilde\tau^{-1} \psi^{-1}}.\]
Since $x\bot Q$ and has norm one, we have $\psi(x)\in\mathbb S\left(\HH^\bot\right)$. Recall that $y\in Q$ is uniquely defined by
$\tilde\tau(x)=xy$. Since $\tilde\tau$ has order three, we necessarily have $y^3=e$ in $C$, and hence $\psi(y)^3=1$ in $\OO$.
Thus there exists $w\in\mathbb S(\HH)$ such that $w\bot 1$ and $\psi(y)=-1/2+w\sqrt{3}/2$. Then $a\bot\langle e,y\rangle$ implies
that $\psi(a)\in\mathbb S(\HH)\cap \langle 1,w\rangle^\bot$,
whence $(w,\psi(a),\psi(x))$ is a Cayley triple. Therefore there is a unique $\rho\in G_2$ mapping
$(w,\psi(a),\psi(x))$ to $(u,v,z)$. Applying Lemma \ref{L: Juggling} to the composition $\phi=\rho\psi$ gives the
isomorphism
\[\phi: P\to \OO_{\phi J\tilde\tau\phi^{-1},\phi J\tilde\tau^{-1} \phi^{-1}}.\]
Since $\phi$ is also an isomorphism from $C$ to $\OO$ mapping $Q$ to $\HH$ and $y$ to $-1/2+u\sqrt{3}/2$, it follows from the
properties of $\tilde\tau$ that
$\phi\tilde\tau \phi^{-1}$ is an automorphism of $\OO$ defined by fixing $\HH$ pointwise and mapping $z=\phi(x)$ to
$\phi(xy)=z(\sqrt{3}u-1)/2$. Thus $\phi\tilde\tau \phi^{-1}=\tau$. Moreover $\phi J\phi^{-1}=K$. Therefore
\[\OO_{\psi J\tilde\tau \psi^{-1},\psi J\tilde\tau^{-1} \psi^{-1}}= \OO_{K\tau,K\tau^{-1}}=P^{11}.\]
A final application of Lemma \ref{L: Juggling} provides the isomorphism
\[\phi:  P_{\sigma_S^{1-j},\sigma_S^{1-i}}\to \OO_{K\tau\phi\sigma_S^{1-j}\phi^{-1},K\tau^{-1}\phi\sigma_S^{1-i}\phi^{-1}}.\]
Now $\phi\sigma_S\phi^{-1}$ acts as $-1$ on $\phi(S)$ and as identity on $\phi(S)^\bot$. Since $\phi(S)$ is the subspace
$W=\langle v,z,vz\rangle$ from Example \ref{Ex: Okubo}, this proves that $P_{\sigma_S^{1-j},\sigma_S^{1-i}}\simeq P^{ij}$. Hence
$A\simeq P^{ij}$, and the proof
is complete.
\end{proof}

In the above proof we in fact showed anew that each Okubo algebra $P\in\mathcal A_8$ is isomorphic to $P^{11}$. The reason for
doing so, rather than merely quoting the fact, is that we needed to track the behaviour of the special subspace $S\subset P$ under
the isomorphism.

\begin{Rk}
Each $P^{ij}$ is an isotope of $P^{11}=\mathcal J^{11}\left(t,t^2\right)$, where $t=\left(\sqrt 3 u-1\right)/2$, and the
functors $\mathcal J^{ij}$ were defined in Section 4.2. However, it is not true, for $(i,j)\neq(1,1)$, that $P^{ij}=\mathcal
J^{ij}(t,t^2)$. If $(i,j)\neq(1,1)$, then $\mathcal J^{ij}\left(t,t^2\right)=\mathcal G^{ij}\left(t,t^2\right)$,
where $\mathcal G^{ij}$ was defined in Proposition \ref{P: 1,3,4}. Therefore, $\mathcal J^{ij}\left(t,t^2\right)$ belongs to
$\mathcal D_{1,3,4}$. Indeed, $\left(t,t^2\right)$ is in the domain
$\phantom{}_{\T\rtimes SO_3}\left(\T\times\T\right)_{ij}$ of $\mathcal G^{ij}$ if and only if $(i,j)\neq(1,1)$.
\end{Rk}

\subsection{The Category $\mathcal D_{1,1,3,3}$}
The below treatment of $\mathcal D_{1,1,3,3}$ is, as we will see, quite technical in nature. A dense subset of $\mathcal
D_{1,1,3,3}$ was given in \cite{P}. We begin by giving a refined version of it, after introducing some notation which we will use
throughout this section.

\begin{Rk}\label{R: Notation} Let $a\in \mathbb S(\OO)$. We write $L_a=L_a^\OO$ and $R_a=R_a^\OO$ for the maps $x\mapsto
ax$ and
$x\mapsto xa$,
respectively. Since $\OO$ is alternative, $L_aR_a=R_aL_a$ and $L_aR_{\overline{a}}=R_{\overline{a}}L_a$, and we write $B_a$ for
the bimultiplication $L_aR_a$ and $C_a$ for the conjugation $L_aR_{\overline{a}}$. (Note that $\overline{a}=a^{-1}$.)
We further write $\sigma_a$ for the reflection in the hyperplane $a^\bot$, i.e.\ $\sigma_a$ is the linear map defined by $a\mapsto
-a$ and $x\mapsto x$ for each $x\bot a$. As usual we have fixed a Cayley triple $(u,v,z)$ in $\OO$.
\end{Rk}

We moreover define, for each $\theta,\gamma\in\R$ and each $(k_1,k_2)\in\Z_2^2$, the map
\[G_{\theta,\gamma}^{k_1,k_2}=B_{\cos\theta+u\sin\theta}\sigma_u^{k_1}(\sigma_{uv}\sigma_{uz}\sigma_{vz\sin\gamma-(uv)z\cos\gamma}
)^ {k_2}:\OO\to\OO.\]
Note that if $k_2=0$, then this map has eigenvalue 1 with eigenspace $\C ^\bot$, and coincides with the map
$\lambda_t^{(k_1)}$ from Section 3.2 with $t=\cos 2\theta+u\sin 2\theta$. If $k_2=1$, then 1 is an
eigenvalue with
eigenspace $W_\gamma=\langle v,z,vz\cos\gamma+(uv)z\sin\gamma\rangle $ and $-1$ is an eigenvalue with eigenspace $uW_\gamma$. 
In all cases, the matrix of $G_{\theta,\gamma}^{k_1,k_2}$ is $2\times2$-block diagonal in the basis $(1,u,v,uv,z,uz,vz,(uv)z)$ of
$\OO$. Note also that since $B_a=B_{-a}$ we have $G_{\theta,\gamma}^{k_1,k_2}=G_{\theta+\pi n,\gamma}^{k_1,k_2}$ for all $n\in\Z$.

\begin{Lma}\label{L: Pe} Let $A\in\mathcal D_{1,1,3,3}$. Then there exist
$(i_1,i_2),(j_1,j_2)\in\Z_2^2$ with $i_2=1$ or $j_2=1$ and $\alpha,\beta,\gamma\in[0,\pi)$ such that $A\simeq
\OO_{f,g}$ with
\begin{equation}\label{E: f}
\begin{array}{lll}f=G_{\alpha,\gamma}^{j_1,j_2}& \text{and}& g=G_{\beta,\gamma}^{i_1,i_2}.\end{array}
\end{equation}
Moreover $((-1)^{i_1+i_2},(-1)^{j_1+j_2})$ is the double sign of $A$.
\end{Lma}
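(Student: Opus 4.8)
The plan is to start from the dense list for $\mathcal D_{1,1,3,3}$ provided in \cite{P}, which expresses every $A\in\mathcal D_{1,1,3,3}$ as an isotope $C_{f_0,g_0}$ of a unital algebra $C\in\mathcal A_8$ with a distinguished four-dimensional subalgebra $Q$ carrying the data of an order-three automorphism, exactly as in the $\mathcal D_{3,5}$-case treated in Section 5.1. First I would invoke Lemma \ref{L: Canonical} to move to $C=\OO$ with $Q=\HH$, and then, as in the $\mathcal D_{3,5}$ proof, use the freedom in choosing the Cayley triple (via a further element of $G_2$) to normalise the relevant cube-root-of-unity element to $-1/2+u\sqrt3/2$ and to place the remaining reflection data in the standard position $W_\gamma=\langle v,z,vz\cos\gamma+(uv)z\sin\gamma\rangle$. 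By Lemma \ref{L: Juggling}, all these normalisations are isomorphisms of isotopes, so $A\simeq\OO_{f,g}$ with $f,g$ built, respectively, from a bimultiplication $B_{\cos\theta+u\sin\theta}$ (the $\mathcal D_{1,1,6}$-type factor $\lambda_t^{(k_1)}$) and, when the corresponding $\{3,3\}$-component is present, a reflection factor $\sigma_u^{k_1}(\sigma_{uv}\sigma_{uz}\sigma_{vz\sin\gamma-(uv)z\cos\gamma})^{k_2}$. This is precisely the shape $G_{\alpha,\gamma}^{j_1,j_2}$ and $G_{\beta,\gamma}^{i_1,i_2}$.

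Next I would pin down the ranges and the combinatorial constraints. The condition ``$i_2=1$ or $j_2=1$'' reflects the fact that $A\in\mathcal D_{1,1,3,3}$ (as opposed to $\mathcal D_{1,1,6}$ or smaller): at least one of $f,g$ must genuinely involve the $\{3,3\}$-splitting reflection, otherwise both $f,g$ would be of $\mathcal D_{1,1,6}$-type and $A$ would lie in $\mathcal D_{1,1,6}$ by Proposition \ref{P: 1,1,6} / Lemma \ref{L: EP}; this should be read off directly from Proposition 28 (or the relevant proposition) of \cite{P}. The reduction of $\alpha,\beta$ to $[0,\pi)$ uses $B_a=B_{-a}$, hence $G_{\theta,\gamma}^{k_1,k_2}=G_{\theta+\pi n,\gamma}^{k_1,k_2}$, as noted after the definition; the reduction of $\gamma$ to $[0,\pi)$ uses that $\sigma_{w}=\sigma_{-w}$ together with the description of $W_\gamma$ as a plane, so that $\gamma$ and $\gamma+\pi$ (and possibly $\gamma\mapsto\pi-\gamma$) give the same subspace and hence the same reflection factor. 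I would also note that when $k_2=1$ one cannot freely shift $\alpha$ by $\pi$ in the presence of the $\sigma_u^{k_1}$ factor without adjusting $k_1$; the statement's convention is to keep $\alpha\in[0,\pi)$ and record the parity in $k_1$, so this needs a line of care but no real computation.

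Finally, the double-sign assertion: I would compute $\det f$ and $\det g$ from the block-diagonal form of $G_{\alpha,\gamma}^{j_1,j_2}$ in the basis $(1,u,v,uv,z,uz,vz,(uv)z)$. Each bimultiplication $B_{\cos\theta+u\sin\theta}\in SO_8$ has determinant $+1$; each simple reflection $\sigma$ has determinant $-1$, and the factor $(\sigma_{uv}\sigma_{uz}\sigma_{vz\sin\gamma-(uv)z\cos\gamma})$ is a product of three reflections, hence has determinant $-1$, while $\sigma_u^{k_1}$ contributes $(-1)^{k_1}$; altogether $\det G_{\alpha,\gamma}^{j_1,j_2}=(-1)^{j_1}\cdot((-1)^3)^{j_2}=(-1)^{j_1+j_2}$, and similarly for $g$. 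Then Remark \ref{R: Double Sign} (the double sign of $\OO_{f,g}$ is $(\det g,\det f)$) yields the double sign $((-1)^{i_1+i_2},(-1)^{j_1+j_2})$. The main obstacle, I expect, is not any of these determinant or range computations but rather the careful bookkeeping in the normalisation step: extracting from \cite{P} exactly which unital $C$, which $Q$, and which order-three automorphism appear, tracking the special-plane data under the $G_2$-element used to standardise the Cayley triple (so that the reflection genuinely lands on $W_\gamma$ for a single common $\gamma$ shared by $f$ and $g$), and verifying that the resulting $f,g$ simultaneously have the asserted block-diagonal form — this is the same kind of delicate argument as in the $\mathcal D_{3,5}$ proof, and is where most of the work will go.
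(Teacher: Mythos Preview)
Your proposal misidentifies the input data from \cite{P}. The dense list for $\mathcal D_{1,1,3,3}$ (Proposition 39 in \cite{P}, as invoked in the paper's proof) does \emph{not} involve a four-dimensional subalgebra $Q$, an order-three automorphism, or a cube root of unity. That is the $\mathcal D_{3,5}$ setup, and the two are genuinely different: algebras in $\mathcal D_{1,1,3,3}$ are isotopes of algebras in $\mathcal D_{1,1,6}$, not of Okubo algebras. Concretely, the data from \cite{P} consists of a unital $C\in\mathcal A_8$ with unity $e$, an element $x\bot e$, and a three-dimensional subspace $S\subset\langle e,x\rangle^\bot$ with $S\bot xS$, such that $f'$ and $g'$ preserve each of $\langle e,x\rangle$, $S$, and $xS$, act as the identity on $S$, and as $\pm\Id$ on $xS$, with at least one of the two signs being $-1$. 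There is no quaternion subalgebra to send to $\HH$ and no root of unity to normalise.

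The actual normalisation step is therefore also different from what you describe. One picks an orthonormal basis $(s_1,s_2,s_3)$ of $S$; the condition $S\bot xS$ forces $xs_1\bot s_2$, so after passing to $\OO$ via an isomorphism $\psi$, the triple $(\psi(x),\psi(s_1),\psi(s_2))$ is a Cayley triple, which one sends to $(u,v,z)$ by some $\rho\in G_2$. The image of $s_3$ then lands in $\langle vz,(uv)z\rangle$, and \emph{that} is where the parameter $\gamma$ comes from --- not from any cube-root-of-unity data. The shared $\gamma$ for $f$ and $g$ is automatic because both $f'$ and $g'$ are built from the \emph{same} subspace $S$. Your double-sign computation and your explanation of the constraint $i_2=1\vee j_2=1$ are fine, but the core of the argument --- extracting the correct structure from \cite{P} and seeing how the Cayley-triple normalisation produces $W_\gamma$ --- needs to be redone from the right starting point.
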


\begin{proof} Let $A\in\mathcal D_{1,1,3,3}$. By Proposition 39 in \cite{P} there exists
$C\in\mathcal A_8$ with unity $e$, an element $x\bot e$ and a three-dimensional subspace $S\subset \langle e,x\rangle ^\bot$ with
$S\bot xS$, such that $A\simeq C_{f',g'}$, where, $f'$ and $g'$ map each of $\langle e,x\rangle $, $S$ and $xS$ isometrically onto
itself, act as
identity on $S$, and satisfy
\[\begin{array}{lll} f'|_{xS}=(-1)^k\Id|_{xS}&\text{and}& g'|_{xS}=(-1)^l\Id|_{xS}
  \end{array}
\]
for some $(k,l)\in\Z_2^2$ with $k=1$ or $l=1$. Let $(s_1,s_2,s_3)$ be an orthonormal basis of $S$. The condition that $S\bot xS$
implies in particular that $xs_1\bot s_2$. If $\psi:C\to\OO$ is an isomorphism, then the image of $(x,s_1,s_2)$ is therefore a
Cayley triple. Define $\rho\in G_2$ by $(\psi(x),\psi(s_1),\psi(s_2))\mapsto (u,v,z)$, and set $\phi=\rho\psi$. Since $\phi$
is orthogonal, it maps $s_3$ into $\langle vz,(uv)z\rangle $, and for some $\gamma\in [0,\pi)$ we have
$\langle \phi(s_3)\rangle =\langle vz\cos\gamma+(uv)z\sin\gamma\rangle $. Thus $\phi f \phi^{-1}$ maps each of $\langle 1,u\rangle
$, $W_\gamma$ and
$uW_\gamma$ isometrically onto itself, acting as the identity on $W_\gamma$, and as $(-1)^k$ times the identity on $uW_\gamma$.
Setting
$f=\phi f'\phi^{-1}$, it follows that $f$ satisfies \eqref{E: f} for some suitable $\alpha\in[0,\pi)$ and $j_1\in\Z_2$, and with
$j_2=k$. By analogous arguments, $g=\phi g'\phi^{-1}$ satisfies \eqref{E: f} for some $\beta\in[0,\pi)$ and
$i_1\in\Z_2$, and with $i_2=l$. Lemma \ref{L: Juggling} then implies that
$C_{f',g'}\simeq\OO_{f,g}$. The statement about the double sign follows from Remark \ref{R: Double Sign}.
\end{proof}

Observe that the converse is false, i.e.\ there exist $f$ and $g$ satisfying \eqref{E: f} with suitable
parameters, such that $\OO_{f,g}\notin \mathcal D_{1,1,3,3}$. We will return to this matter later.

We now have twelve 3-parameter families of algebras to consider. In order to simplify the classification problem, Alberto Elduque
(personal communication, October 2013) showed that the parameter $\gamma$ is superfluous, and further that each $A\in\mathcal
D_{1,1,3,3}$ is isomorphic to $\OO_{f,g}$ for some $f,g\in O_8$ fixing the element 1 and
the subspace $\langle u\rangle $ and determined on $\langle 1,u\rangle ^\bot$ by one parameter each. The precise content of his
result is presented in the next proposition. Our proof however differs from his.  For coherence with our approach we express the
result in terms of the conjugation map defined in Remark \ref{R: Notation}. As in the previous section we set $W=\langle
v,z,vz\rangle$ and write $\sigma_W$ for $\sigma_{uv}\sigma_{uz}\sigma_{(uv)z}$. We define, for each $\theta\in\R$ and each
$(k_1,k_2)\in\Z_2^2$, the map
\[F_\theta^{k_1,k_2}=C_{\cos\theta+u\sin\theta}\sigma_u^{k_1}\sigma_W^{k_2}:\OO\to\OO.\]
In the basis $(1,u,v,uv,z,uz,vz,(uv)z)$, this map has the block-diagonal matrix
\[\left(\begin{array}{lllll}
1&&&&\\
&k_1&&&\\
&&\widehat{2\theta}_{k_2}&&\\
&&&\widehat{2\theta}_{k_2}&\\
&&&&\widehat{-2\theta}_{k_2}\\
        \end{array}\right),\]
where for each $\zeta\in\R$ and each $k\in\Z_2$,
\[\widehat{\zeta}_k=\left(\begin{array}{rr}
\cos\zeta&-(-1)^k\sin\zeta\\
\sin\zeta&(-1)^k\cos\zeta\\
        \end{array}\right).\]

\begin{Prp}\label{P: AE} Let $A\in\mathcal D_{1,1,3,3}$. Then
$A\simeq \OO_{F_{\xi}^{j_1,j_2},F_{\eta}^{i_1,i_2}}$ for some $\xi,\eta\in\R$ and $(i_1,i_2),(j_1,j_2)\in\Z_2^2$ with
$i_2=1$ or $j_2=1$.
\end{Prp}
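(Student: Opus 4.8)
The plan is to take the presentation $A\simeq\OO_{f,g}$ with $f=G^{j_1,j_2}_{\alpha,\gamma}$ and $g=G^{i_1,i_2}_{\beta,\gamma}$ furnished by Lemma~\ref{L: Pe} and to transport it, by a single composition-algebra isomorphism, to the desired $F$-form. Conjugation by an automorphism preserves eigenvalues and eigenspaces, so it alone cannot turn a $G$-type map --- whose restriction to $\langle 1,u\rangle\simeq\C$ is the rotation $B_{\cos\alpha+u\sin\alpha}$ by $2\alpha$, and which moreover depends on $\gamma$ --- into an $F$-type map, whose restriction to $\langle 1,u\rangle$ is just $\sigma_u^{j_1}|_{\langle 1,u\rangle}$ and which does not involve $\gamma$. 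The tool that does the job is triality, via Proposition~\ref{P: Ca}: $\phi\colon\OO_{f,g}\to\OO_{f',g'}$ is an isomorphism exactly when $\phi\in SO_8$ and $(f',g')=(\phi_1 f\phi^{-1},\phi_2 g\phi^{-1})$ for a triality pair $(\phi_1,\phi_2)$ of $\phi$. Since the companions $\phi_1,\phi_2$ need not equal $\phi$, this operation does not preserve eigenvalues, and can shift the ``rotation part'' of an isotopy factor off $\langle 1,u\rangle$ and onto $\C^\bot=\langle 1,u\rangle^\bot$ --- which is precisely the difference between a $G$-type and an $F$-type map.

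The engine is the following. For a unit $c=\cos\psi+u\sin\psi\in\C$, the bimultiplication $B_c=L_cR_c$ has triality pair $(L_c,R_c)$: indeed the middle Moufang identity gives $B_c(xy)=c(xy)c=(cx)(yc)=L_c(x)R_c(y)$, consistent with \eqref{E: Triality} since $R_{\overline c}B_c=R_{\overline c}L_cR_c=L_c$ and $L_{\overline c}B_c=L_{\overline c}L_cR_c=R_c$ by alternativity; likewise $\rho B_c$ has triality pair $(\rho L_c,\rho R_c)$ for $\rho\in G_2$, and $B_{c_1}B_{c_2}$ has triality pair $(L_{c_1}L_{c_2},R_{c_1}R_{c_2})$. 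In the basis $(1,u,v,uv,z,uz,vz,(uv)z)$ all of $B_c,L_c,R_c$ are $2\times2$-block diagonal, and one computes directly that $B_c$ is the rotation by $2\psi$ on $\langle 1,u\rangle$ and the identity on $\C^\bot$, while $L_c$ (resp.\ $R_c$) is the rotation by $\psi$ on $\langle 1,u\rangle$, by $\psi$ (resp.\ $-\psi$) on $\langle v,uv\rangle$ and $\langle z,uz\rangle$, and by $-\psi$ (resp.\ $\psi$) on $\langle vz,(uv)z\rangle$. Hence, conjugating $\OO_{f,g}$ by a suitable $\phi$ of this shape transports the rotation that $B_{\cos\alpha+u\sin\alpha}$ (resp.\ $B_{\cos\beta+u\sin\beta}$) carries on $\langle 1,u\rangle$ over to the three $\C^\bot$-blocks, turning that factor into a conjugation $C_{\cos\xi+u\sin\xi}$ (resp.\ $C_{\cos\eta+u\sin\eta}$), i.e.\ into the multiplier of an $F$-type map; the reflections $\sigma_u$ and $\sigma_W$ survive once one absorbs the slight tilt they acquire by a further conjugation with an automorphism fixing $\pm u$ (which leaves all rotation angles untouched).

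The crux is that one and the same $\phi$ must bring both $f$ and $g$ into $F$-form simultaneously: as $\phi_1,\phi_2$ are coupled, a bare $B_c$ with $c\in\C$ carries only one angular parameter and cannot absorb the three independent continuous parameters $\alpha,\beta,\gamma$ of the $G$-form. The natural remedy is to allow $\phi$ of the richer shape $B_{c_1}\rho B_{c_2}$ with $c_1\in\C$, $\rho\in G_2$ and $c_2$ in a second $2$-plane, whose triality pair $(L_{c_1}\rho L_{c_2},R_{c_1}\rho R_{c_2})$ has enough room, and to fix the parameters so that the $\langle 1,u\rangle$-block of $\phi_1 f\phi^{-1}$ becomes $\sigma_u^{j_1}|_{\langle 1,u\rangle}$ and that of $\phi_2 g\phi^{-1}$ becomes $\sigma_u^{i_1}|_{\langle 1,u\rangle}$, while their $\C^\bot$-blocks become $\widehat{2\xi}_{j_2},\widehat{-2\xi}_{j_2}$ and $\widehat{2\eta}_{i_2},\widehat{-2\eta}_{i_2}$; alternatively one argues in two stages, first straightening $f$ and then straightening the modified second factor by a triality element stabilizing the $F$-form already achieved for $f$. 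Controlling every $2\times2$ block while $\gamma$ is being absorbed is the technical heart of the proof. Once the blocks are aligned, comparison with the matrix of $F^{k_1,k_2}_\theta$ displayed just before the statement finishes it, and Remark~\ref{R: Double Sign} (together with $\det F^{j_1,j_2}_\xi=(-1)^{j_1+j_2}$) confirms that the surviving discrete exponents are those of Lemma~\ref{L: Pe}, in particular still satisfying $i_2=1$ or $j_2=1$.
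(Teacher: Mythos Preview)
Your high-level strategy is exactly right and matches the paper: start from Lemma~\ref{L: Pe}, then use Proposition~\ref{P: Ca} with a carefully chosen $\phi\in SO_8$ whose triality companions $\phi_1,\phi_2$ differ from $\phi$ on the $\langle 1,u\rangle$-block, so as to trade the bimultiplication $B_{\cos\alpha+u\sin\alpha}$ for a conjugation $C_{\cos\xi+u\sin\xi}$. Your computation of the triality pair of $B_c$ via the middle Moufang identity is correct and is the mechanism the paper exploits.

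Where your sketch diverges from the paper is in the specific ansatz for $\phi$. You propose $\phi=B_{c_1}\rho B_{c_2}$ with $c_1\in\C$ and $c_2$ ``in a second $2$-plane''; this introduces a second plane unnecessarily and you do not verify that the resulting blocks align. The paper instead takes
\[
\phi=L_tR_s\rho,\qquad t=\cos\theta+u\sin\theta,\quad s=\cos\zeta+u\sin\zeta,
\]
with \emph{both} $t,s\in\mathbb S^1\subset\C$, and $\rho\in G_2$ the specific automorphism $(u,v,z)\mapsto(u,cv,cz)$ with $c=\cos(\gamma/3)-u\sin(\gamma/3)$. A triality pair is then $(\phi_1,\phi_2)=(B_tR_{\overline s}\rho,\,L_{\overline t}B_s\rho)$. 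The point you missed is that one does not need a second $2$-plane: taking $L_tR_s$ with $t\neq s$ in the \emph{same} complex line already supplies two independent angular parameters, while $\rho$ is tailored to absorb $\gamma$. Note that your $B_{c_1}\rho B_{c_2}$ with $c_1,c_2\in\C$ and $\rho(u)=u$ would collapse to $B_{c_1c_2}\rho$, leaving only one free parameter --- which is precisely why you were led to reach for a second plane.

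With the paper's $\phi$ one then solves a small linear system for $(\theta,\zeta)$ in terms of $(\alpha,\beta)$, with four cases according to $(i_1,j_1)$, and reads off $(\xi,\eta)$ in terms of $(\theta,\zeta,\gamma)$ with three cases according to $(i_2,j_2)$; the block-by-block verification that $\phi_1 G_{\alpha,\gamma}^{j_1,j_2}\phi^{-1}=F_\xi^{j_1,j_2}$ and $\phi_2 G_{\beta,\gamma}^{i_1,i_2}\phi^{-1}=F_\eta^{i_1,i_2}$ is then routine. Your ``technical heart'' paragraph correctly anticipates that this is where the work lies, but as written the proposal stops short of producing the ansatz that makes the computation close, so it remains a sketch rather than a proof.
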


\begin{proof} By Lemma \ref{L: Pe} there exist $\alpha,\beta,\gamma\in[0,\pi)$ such that
$A\simeq\OO_{G_{\alpha,\gamma}^{j_1,j_2},G_{\beta,\gamma}^{i_1,i_2}}$. 
Let $c=\cos(\gamma/3)-u\sin(\gamma/3)$, and define $\rho\in G_2$ by
$(u,v,z)\mapsto(u,cv,cz)$, which is clearly a
Cayley triple. Let $t=\cos\theta+u\sin\theta$ and $s=\cos\zeta+u\sin\zeta$ for $\theta,\zeta\in\R$ to be chosen later, and set
$\phi=L_tR_s\rho$. Then 
\[(\phi_1,\phi_2)=(B_tR_{\overline{s}}\rho,L_{\overline{t}}B_s\rho)\]
is a pair of triality components for $\phi$, and $A\simeq \OO_{\phi_1 f\phi^{-1},\phi_2 g\phi^{-1}}$ by Proposition \ref{P: Ca}.
We now choose $\theta$ and $\zeta$ so that
\begin{equation}\label{E: i1j1}
(3\theta/2,3\zeta/2)=\left\{ \begin{array}{ll} (\alpha+2\beta,2\alpha+\beta)& \text{if\ } (i_1,j_1)=(0,0),\\
                                               (-\alpha,-2\alpha-3\beta)& \text{if\ } (i_1,j_1)=(0,1),\\
                                               (-3\alpha-2\beta,-\beta)& \text{if\ } (i_1,j_1)=(1,0),\\
                                               (-\alpha,-\beta)& \text{if\ } (i_1,j_1)=(1,1).
  \end{array}\right.
\end{equation}

Then by direct computations one verifies that 
\[\begin{array}{lll}\phi_1f\phi^{-1}=F_{\xi}^{j_1,j_2}&
\text{and}& \phi_2g\phi^{-1}=F_{\eta}^{i_1,i_2},\end{array}\]
where $\xi,\eta\in\R$ are given by
\begin{equation}\label{E: i2j2}
(2\xi,2\eta)=\left\{ \begin{array}{ll} (\theta-2\gamma/3,\zeta-2\theta) & \text{if\ } (i_2,j_2)=(0,1),\\
                                               (2\zeta-\theta,-\zeta-2\gamma/3) & \text{if\ } (i_2,j_2)=(1,0),\\
                                               (\theta-2\gamma/3,-\zeta-2\gamma/3) & \text{if\ } (i_2,j_2)=(1,1).
\end{array}\right.
\end{equation}

The proof is complete.
\end{proof}

\begin{Rk}\label{R: Reverse} Let $(i_1,i_2),(j_1,j_2)\in\Z_2^2$ with $i_2=1$ or $j_2=1$. For any $\xi,\eta\in\R$ and any
$\gamma\in[0,\pi)$, we can solve for $\alpha$ and $\beta$ in \eqref{E: i1j1} and \eqref{E: i2j2}. In particular it follows that
for any $\xi,\eta\in\R$ there exist $\alpha,\beta\in[0,\pi)$ such that $\OO_{F_{\xi}^{j_1,j_2},F_{\eta}^{i_1,i_2}}$ is
isomorphic
to $\OO_{G_{\alpha,0}^{j_1,j_2},G_{\beta,0}^{i_1,i_2}}$. In other words, the parameter $\gamma$ is superfluous.
\end{Rk}

The above proposition and remark together show that for each $A\in\mathcal D_{1,1,3,3}$ there exist $\alpha,\beta\in[0,\pi)$ and
$(i_1,i_2),(j_1,j_2)\in\Z_2^2$ with $i_2=1$ or $j_2=1$ such that $A\simeq\OO_{G_{\alpha,0}^{j_1,j_2},G_{\beta,0}^{i_1,i_2}}$. The
following lemma provides a converse to this statement.

\begin{Lma}\label{L: Exclusion} Let $(i_1,i_2),(j_1,j_2)\in\Z_2^2$ with $i_2=1$ or $j_2=1$, and let $\alpha,\beta\in[0,\pi)$. Then
$\OO_{G_{\alpha,0}^{j_1,j_2},G_{\beta,0}^{i_1,i_2}}\in\mathcal D_{1,1,3,3}$ if and only if 
\[(\cos2\alpha,\cos2\beta)\neq((-1)^{j_1+j_2},(-1)^{i_1+i_2}).\]
\end{Lma}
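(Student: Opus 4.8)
The plan is to determine, via the derivation algebra, which of the eight blocks of Proposition~\ref{P: Decomp} the algebra $\OO_{f,g}$ (where $f=G_{\alpha,0}^{j_1,j_2}$, $g=G_{\beta,0}^{i_1,i_2}$) belongs to. Two preliminary facts organize everything. First, as recorded when $G_{\theta,\gamma}^{k_1,k_2}$ was introduced, with $\gamma=0$ the map $G_{\theta,0}^{k_1,k_2}=B_{\cos\theta+u\sin\theta}\sigma_u^{k_1}\sigma_W^{k_2}$ preserves the splitting $\OO=\C\oplus W\oplus uW$, acting on $W=\langle v,z,vz\rangle$ as the identity, on $uW=\langle uv,uz,(uv)z\rangle$ as $(-1)^{k_2}\Id$, and on $\C$ as $x\mapsto a^2K^{k_1}(x)$ with $a=\cos\theta+u\sin\theta$; in particular $G_{\theta,0}^{k_1,k_2}(1)=a^2=\cos2\theta+u\sin2\theta$. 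Second, $W$ is a totally real three-dimensional subspace of $\C^\bot$ for the complex structure $L_u$ on $\C^\bot$ (indeed $W\oplus L_u(W)=W\oplus uW=\C^\bot$), so the stabilizer of $W$ inside $\operatorname{Stab}_{G_2}(u)\cong SU_3$ is a copy of $SO_3$, with Lie algebra $\mathfrak{so}_3\cong\mathfrak{su}_2$. Each element $\delta$ of this $\mathfrak{su}_3$ vanishes on $\C$ and commutes with $L_u$, hence each $\delta\in\mathfrak{so}_3$ preserves $\C$, $W$ and $uW$, and therefore commutes with both $f$ and $g$; thus $\mathfrak{so}_3\subseteq\Der(\OO_{f,g})$ always, so $\OO_{f,g}\in\mathcal D$, and under $\mathfrak{so}_3$ the space $\OO=\langle1\rangle\oplus\langle u\rangle\oplus W\oplus uW$ is the sum of two trivial modules and two copies of the three-dimensional irreducible module (using that $L_u\colon W\to uW$ is $\mathfrak{so}_3$-equivariant).

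For the implication "if $(\cos2\alpha,\cos2\beta)=((-1)^{j_1+j_2},(-1)^{i_1+i_2})$ then $\OO_{f,g}\notin\mathcal D_{1,1,3,3}$": in this case $\cos2\alpha,\cos2\beta\in\{\pm1\}$, so $a,b\in\{1,u\}$, and since $B_1=\Id$ and $B_u=\sigma_1\sigma_u$ one checks that $f$ and $g$ each collapse to one of $\Id$, $\sigma_1$, $\sigma_{Q'}=\sigma_u\sigma_W$, $\sigma_1\sigma_{Q'}$, where $Q'=\langle1,v,z,vz\rangle$ is a quaternion subalgebra of $\OO$ and $\sigma_{Q'}$ is $+\Id$ on $Q'$, $-\Id$ on $Q'^\bot=uQ'$. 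Every $\delta\in\mathfrak{g}_2$ with $\delta(Q')\subseteq Q'$ commutes with each of these four maps (automatically with $\sigma_1$, being zero on $\langle1\rangle$; with $\sigma_{Q'}$ since it preserves $Q'$ and $Q'^\bot$), so $\{\delta\in\mathfrak{g}_2:\delta(Q')\subseteq Q'\}=\operatorname{Lie}(G_2^{Q'})\cong\mathfrak{so}_4$ is contained in $\Der(\OO_{f,g})$. As $\dim\mathfrak{so}_4=6$, whereas Proposition~\ref{P: Decomp} forces $\dim\Der(A)\le4$ for $A\in\mathcal D_{1,1,3,3}$, we conclude $\OO_{f,g}\notin\mathcal D_{1,1,3,3}$.

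For the converse, assume the pair is not of that form, so (say) $f$ satisfies $\cos2\alpha\ne(-1)^{j_1+j_2}$. First I would show $\Der(\OO_{f,g})$ is not of type $\mathfrak g_2$, $\mathfrak{su}_3$ or $\mathfrak{su}_2\times\mathfrak{su}_2$: any such would be $\operatorname{Lie}(G_2)$, $\operatorname{Lie}(\operatorname{Stab}_{G_2}(w))$, or $\operatorname{Lie}(G_2^Q)$, so $f$ (and $g$) would have to commute with that subgroup, hence preserve its isotypic decomposition of $\OO$ and act as a scalar on each irreducible piece; going through the finitely many possibilities and using the explicit shape of $G_{\theta,0}^{k_1,k_2}$ recalled above (in particular that a non-degenerate $f$ has $f(1)=a^2\notin\mathbb R$ unless $f|_{\langle1,u\rangle}=\pm\Id$, together with the constraint $i_2=1$ or $j_2=1$), one excludes all three. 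Hence $\Der(\OO_{f,g})=\mathfrak{su}_2\times\mathfrak a$ with $\dim\mathfrak a\le1$, so $\OO_{f,g}$ lies in one of $\mathcal D_{1,3,4}^a$, $\mathcal D_{1,1,2,4}$, $\mathcal D_{1,1,1,1,4}$, $\mathcal D_{3,5}$, $\mathcal D_{1,1,3,3}$. The $\mathfrak{su}_2$-factor, being simple and three-dimensional, equals the $\mathfrak{so}_3$ above; since any irreducible submodule of a $\mathfrak{su}_2\times\mathfrak a$-module restricts to the $\mathfrak{su}_2$-factor isotypically, and the isotypic $\mathfrak{so}_3$-submodules of $\OO$ have dimension in $\{1,2,3,6\}$, no irreducible $\Der(\OO_{f,g})$-submodule can have dimension $4$ or $5$; this rules out the first four blocks and leaves $\OO_{f,g}\in\mathcal D_{1,1,3,3}$.

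I expect the main obstacle to be the case-check in the preceding paragraph — verifying that a non-degenerate $f$ (or $g$) cannot commute with $G_2$, with any $\operatorname{Stab}_{G_2}(w)$, or with any $G_2^Q$. A shorter route for this direction is to invoke directly the ``if and only if'' version of \cite{P}, Proposition~39 (Lemma~\ref{L: Pe} quotes only its necessary direction): the structural hypotheses of that proposition were already verified in the proof of Lemma~\ref{L: Pe} for $C=\OO$, $e=1$, $x=u$, $S=W$ and $f=G_{\alpha,0}^{j_1,j_2}$, $g=G_{\beta,0}^{i_1,i_2}$, so one is left to translate its non-degeneracy hypothesis on $f|_{\langle1,u\rangle}$ and $g|_{\langle1,u\rangle}$ into the stated condition $(\cos2\alpha,\cos2\beta)\ne((-1)^{j_1+j_2},(-1)^{i_1+i_2})$, using $G_{\theta,0}^{k_1,k_2}(1)=\cos2\theta+u\sin2\theta$ and $\det G_{\theta,0}^{k_1,k_2}=(-1)^{k_1+k_2}$.
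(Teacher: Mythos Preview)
Your treatment of the two directions is uneven. The paper's own proof is a pure citation: the ``if'' direction is Proposition~25 of \cite{P}, and the converse is Examples~22--24 of \cite{P}. Against that baseline:

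\textbf{The ``only if'' direction.} Your argument is correct and strictly more informative than the paper's citation. You correctly identify that when $(\cos2\alpha,\cos2\beta)=((-1)^{j_1+j_2},(-1)^{i_1+i_2})$ the maps $f,g$ collapse to the four operators $\Id,\sigma_1,\sigma_{Q'},\sigma_1\sigma_{Q'}$ attached to the quaternion subalgebra $Q'=\langle 1,v,z,vz\rangle$, and that all of $\operatorname{Lie}(G_2^{Q'})\cong\mathfrak{so}_4$ then lies in $\Der(\OO_{f,g})$, which is incompatible with $\mathcal D_{1,1,3,3}$. This is a clean self-contained argument.

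\textbf{The ``if'' direction.} Here your first approach has a real gap. You assert that if $\Der(\OO_{f,g})$ is of type $\mathfrak g_2$, $\mathfrak{su}_3$ or $\mathfrak{su}_2\times\mathfrak{su}_2$, then it is literally $\operatorname{Lie}(G_2)$, $\operatorname{Lie}(\Stab_{G_2}(w))$ or $\operatorname{Lie}(G_2^Q)$ inside $\mathfrak{so}_8$, and that $f,g$ must then commute with the corresponding group. Neither step is justified: you have not shown $\Der(\OO_{f,g})\subseteq\mathfrak g_2$ (derivations of an isotope need not be derivations of $\OO$), and even granting $\delta\in\Der(\OO_{f,g})\cap\Der(\OO)$, the two Leibniz rules only give $(f\delta-\delta f)(x)\,g(y)=f(x)\,(\delta g-g\delta)(y)$, not $[\delta,f]=0$ directly. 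One can rescue this via triality (Proposition~\ref{P: Ca} applied to the identity map on $\OO_{\phi f\phi^{-1},\phi g\phi^{-1}}=\OO_{f,g}$ for $\phi$ in the connected group generated by such $\delta$), but that, together with the uniqueness-up-to-conjugacy of the relevant subalgebras of $\mathfrak g_2$ and the case analysis you allude to, is substantial work you have not carried out. Your acknowledged fallback---invoking the two-sided version of the relevant result in \cite{P}---is exactly what the paper does (it cites Proposition~25 and Examples~22--24 rather than Proposition~39, but the content is the same). So for this direction your proposal ultimately reduces to the paper's proof.
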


\begin{proof} Applying Proposition 25 in \cite{P} and its proof to the present setting implies the ``if''-part of the
statement, while the converse is established in Examples 22--24 in \cite{P}.
\end{proof}

Note that for each $(i_1,i_2),(j_1,j_2)\in\Z_2^2$ with $i_2=1$ or $j_2=1$, there is exactly one pair $(\alpha,\beta)\in[0,\pi)^2$
such
that $\OO_{G_{\alpha,0}^{j_1,j_2},G_{\beta,0}^{i_1,i_2}}\notin\mathcal D_{1,1,3,3}$.

The reason for taking the seemingly complicated detour via algebras
of the form $\OO_{F_{\xi}^{j_1,j_2},F_{\eta}^{i_1,i_2}}$ is that isomorphisms between these algebras are easier to construct
explicitly. The following remark provides the background.

\begin{Rk}\label{R: A0} If $A=\OO_{F_\xi^{j_1,j_2},F_\eta^{i_1,i_2}}\in\mathcal D_{1,1,3,3}$,
then $A_0=\C$ as a vector subspace. Indeed, we know from
\cite{P} that this is the case for the algebras in $\mathcal D_{1,1,3,3}$ of the form
$\OO_{G_{\alpha,\gamma}^{j_1,j_2},G_{\beta,\gamma}^{i_1,i_2}}$, and
the isomorphisms in the proof of Proposition \ref{P: AE}
all map $\C$ to itself.  Thus $A_0\in\mathcal A_2$, and by construction, the double sign of $A_0$ is
$((-1)^{i_1},(-1)^{j_1})$. It is known that each algebra in $\mathcal A_2$ has precisely three
non-zero idempotents if its double sign is $(-,-)$, and a unique non-zero idempotent otherwise.
The set of all non-zero idempotents of $A_0$ is $\{1,(-1\pm\sqrt{3}u)/2\}$ if $(i_1,j_1)=(1,1)$, while $1$ is the unique non-zero
idempotent of $A_0$ otherwise.
\end{Rk}

We will solve the classification problem of $\mathcal D_{1,1,3,3}$ as
follows. Let
$\alpha,\beta\in[0,\pi)$ and set $A=\OO_{G_{\alpha,0}^{j_1,j_2},G_{\beta,0}^{i_1,i_2}}$ and
$B=\OO_{G_{\alpha',0}^{j_1,j_2},G_{\beta',0}^{i_1,i_2}}$. Proposition \ref{P: AE} and its proof provide us with
$\xi,\eta,\xi',\eta'\in\R$ and thence algebras
$C=\OO_{F_\xi^{j_1,j_2},F_\eta^{i_1,i_2}}$ and $D=\OO_{F_{\xi'}^{j_1',j_2'},F_{\eta'}^{i_1',i_2'}}$ such that $A\simeq C$ and
$B\simeq D$. With Proposition \ref{P: Iso} below we determine all isomorphisms from $C$ to $D$. Using this we express, in Lemma
\ref{L: xieta}, the condition $C\simeq D$ in terms of $\xi$, $\eta$, $\xi'$ and $\eta'$. Using \ref{P: AE}, we express this as an
isomorphism condition on $A$ and $B$ in terms of $\alpha$, $\beta$, $\alpha'$ and $\beta'$, which enables us to produce a
classification in Theorem \ref{T: 1133}. 

\begin{Prp}\label{P: Iso} Let $(i_1,i_2),(j_1,j_2)\in\Z_2^2$ with $i_2=1$ or $j_2=1$ and let $\xi,\eta,\xi',\eta'\in\R$. If
$C=\OO_{F_\xi^{j_1,j_2},F_\eta^{i_1,i_2}}$ and 
$D=\OO_{F_{\xi'}^{j_1',j_2'},F_{\eta'}^{i_1',i_2'}}$ are in $\mathcal D_{1,1,3,3}$, and $\phi:C\to D$ is an
isomorphism, then $(i_1,j_1,i_2,j_2)=(i_1',j_1',i_2',j_2')$, and $\phi=B_c\rho$ for some $\rho\in G_2^{\langle
u\rangle }$
and $c\in\C$
satisfying
\begin{enumerate}[(i)]
\item $c^3=1$ if $i_1=j_1=1$, and
\item $c=1$, otherwise.
\end{enumerate}
\end{Prp}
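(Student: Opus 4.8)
The plan is to determine the isomorphism $\phi:C\to D$ by tracking how it must act on the various canonical substructures of $C$ and $D$, namely the trivial submodule $A_0$ and the $4$-dimensional submodule $\HH$ (or whatever the unique $4$-dimensional irreducible submodule is), and then reduce to the condition of Proposition~\ref{P: Ca} together with the explicit block forms of $F_\xi^{j_1,j_2}$ and $F_\eta^{i_1,i_2}$. First I would invoke Remark~\ref{R: A0}: since $C,D\in\mathcal D_{1,1,3,3}$ we have $C_0=D_0=\C$ as vector subspaces, so by Remark~\ref{R: Submodules} any isomorphism $\phi$ satisfies $\phi(\C)=\C$, and hence $\phi$ restricts to an isomorphism $C_0\to D_0$ of two-dimensional algebras. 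Reading off double signs from the block matrices, $C_0$ has double sign $((-1)^{i_1},(-1)^{j_1})$ and $D_0$ has double sign $((-1)^{i_1'},(-1)^{j_1'})$; since isomorphic algebras in $\mathcal A_2$ have equal double sign (Proposition~\ref{P: Double Sign}), this gives $i_1=i_1'$ and $j_1=j_1'$. Likewise, comparing double signs of $C$ and $D$ themselves via Lemma~\ref{L: Pe} (or Remark~\ref{R: Double Sign}) yields $i_1+i_2=i_1'+i_2'$ and $j_1+j_2=j_1'+j_2'$, so altogether $(i_1,j_1,i_2,j_2)=(i_1',j_1',i_2',j_2')$.

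Next I would pin down the shape of $\phi$. Since $\phi(\C)=\C$, $\phi$ permutes the idempotents of $\C$; by Remark~\ref{R: A0} the nonzero idempotents are $\{1,(-1\pm\sqrt3 u)/2\}$ when $i_1=j_1=1$ and just $\{1\}$ otherwise. Using triality (Proposition~\ref{P: Ca}) with a triality pair $(\phi_1,\phi_2)$ and the identities \eqref{E: Triality}, the condition that $\phi$ conjugates $F_\xi^{j_1,j_2}$ and $F_\eta^{i_1,i_2}$ to $F_{\xi'}^{j_1,j_2}$ and $F_{\eta'}^{i_1,i_2}$, all of which act trivially on $\langle u\rangle^\bot\cap\C$... more precisely, all four maps fix $1$ and send $u\mapsto (-1)^{?}u$, so $\phi_1,\phi_2$ must fix $\langle 1,u\rangle$ setwise in a compatible way. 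I would argue that $\phi(1)$ is an idempotent of $D_0$ and that the block structure forces $\phi_1(1),\phi_2(1)\in\C$; writing $\phi_1(1)=\bar c$ for some $c\in\C$ with $|c|=1$, the relation $\phi_1=R^\OO_{\overline{\phi_2(1)}}\phi$ and its companion let me express $\phi=B_c\rho$ where $\rho=L_{\overline c}R_{\overline c}{}^{-1}\cdots$—concretely, $\rho:=B_c^{-1}\phi$ is then checked to lie in $G_2$ (its triality pair becomes $(\rho,\rho)$), and since $\rho$ fixes $\langle u\rangle$ (because all the $F$'s do, and $B_c$ acts trivially on $\langle u\rangle$ as $c\in\C$), we get $\rho\in G_2^{\langle u\rangle}$.

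Finally I would extract conditions (i) and (ii) on $c$. When $i_1=j_1\neq 1$ (i.e.\ not both equal to $1$), $D_0=\C$ has a unique nonzero idempotent, namely $1$, so $\phi(1)=1$, forcing $\phi_1(1)=\phi_2(1)=1$ by the division property of $\OO$ (as in the proof of Lemma~\ref{L: G2}), hence $\bar c=1$, i.e.\ $c=1$. When $i_1=j_1=1$, $\phi$ may send $1$ to any of the three idempotents $1,(-1\pm\sqrt3u)/2$ of $\C$; these are precisely the elements $c\in\C$ with $c^3=1$ (they are the cube roots of unity in $\C\simeq\C$), and since $\phi_1(1)\in\C$ must be one of these idempotents — because $\phi_1=R^\OO_{\overline{\phi_2(1)}}\phi$ and a short computation with the block forms shows $\phi_1(1)^3=1$ — we obtain $c^3=1$. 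This completes the description $\phi=B_c\rho$ with $\rho\in G_2^{\langle u\rangle}$ and the stated constraint on $c$.

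I expect the main obstacle to be the bookkeeping in the triality step: carefully verifying that $\rho:=B_c^{-1}\phi$ genuinely lies in $G_2$ (equivalently that $(\rho,\rho)$ is a triality pair, equivalently $\rho(1)=1$), and that conjugation by $B_c$ and by $\rho$ interact correctly with the block-diagonal maps $F^{k_1,k_2}_\bullet$ so that the parameters match up with $i_1=i_1'$ etc.\ rather than some twisted version. The transitivity/uniqueness properties of the $G_2$-action on Cayley triples, together with the explicit matrices $\widehat{\zeta}_k$ displayed before the proposition, should make these verifications mechanical once the structural skeleton above is in place.
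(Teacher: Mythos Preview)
Your overall architecture is right: show $(i_1,j_1,i_2,j_2)=(i_1',j_1',i_2',j_2')$ via double signs of $C$, $D$, $C_0$, $D_0$; then use the idempotent structure of $C_0$ to control $\phi(1)$; then factor $\phi$ as $B_c\rho$. The case $(i_1,j_1)\neq(1,1)$ goes through essentially as you say (and as the paper does): $\phi(1)=1$ and Remark~\ref{R: U1} gives $\phi\in G_2$, hence $c=1$.

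The gap is in the case $i_1=j_1=1$, in your identification of $c$. You set $\bar c=\phi_1(1)$ and then take $\rho=B_c^{-1}\phi$. But if $\phi=B_{c_*}\rho$ with $\rho\in G_2$, then a triality pair of $\phi$ is $(L_{c_*}\rho,R_{c_*}\rho)$ by the Moufang identity, so $\phi_1(1)=c_*$, not $\bar c_*$. Concretely: from the isomorphism equations one computes $\phi_1(1)=\phi_2(1)=\overline{\phi(1)}=\bar e$ with $e^3=1$; the correct $c$ is $\bar e$, which satisfies $c^2=e=\phi(1)$ and $c^3=1$. With your choice $c=e$ one gets $B_{\bar e}\phi(1)=\bar e^2 e=\bar e\neq 1$, so your $\rho$ is \emph{not} in $G_2$ and the verification you anticipate would fail. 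A second error: your claim that ``$B_c$ acts trivially on $\langle u\rangle$ as $c\in\C$'' is false, since $B_c(u)=c^2u\notin\langle u\rangle$ unless $c^2\in\R$. Showing $\rho\in G_2^{\langle u\rangle}$ therefore needs a separate argument (e.g.\ $\phi(u)=\pm ue$, so $\rho(u)=B_{\bar c}(\pm ue)=\pm\bar c^2 ue=\pm\bar e ue=\pm u$).

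The paper avoids these triality bookkeeping traps by going the other way: it sets $x=\phi^{-1}(v)$, $y=\phi^{-1}(z)$, verifies via an explicit octonion computation that $\phi(xy)=vz$ so that $(u,x,y)$ is a Cayley triple, and uses the simply transitive $G_2$-action on Cayley triples to produce $\rho'\in G_2^{\langle u\rangle}$ directly. Then $\phi\rho'\widehat\varepsilon$ is computed on a basis and identified with $B_c$ for some $c$ with $c^3=1$. This constructive route trades your triality manipulation for a concrete basis calculation, and in particular never requires guessing $c$ in advance.
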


Note that if $\phi=B_c\rho$ with $\rho\in G_2$, then $(L_c\rho,R_c\rho)$ is a pair of triality components of $\phi$. By virtue of
Proposition \ref{P: Ca}, the above proposition implies that $\phi: C\to D$ is an isomorphism if and only if
$(i_1,j_1,i_2,j_2)=(i_1',j_1',i_2',j_2')$, $\phi=B_c\rho$ with $\rho$ and $c$ as in the proposition, and
\begin{equation}\label{E: Classification}\begin{array}{lll}L_c\rho
F_\xi^{j_1,j_2}\rho^{-1}B_{\overline{c}}= F_{\xi'}^{j_1,j_2}&\text{and}&R_c\rho F_\eta^{i_1,i_2} \rho^{-1}B_{
\overline { c}}=F_{\eta'}^{i_1,i_2}.\end{array}
\end{equation}
(As far as Proposition \ref{P: Ca} is concerned, the right hand sides should read $(-1)^kF_{\xi'}^{j_1,j_2}$ and
$(-1)^kF_{\eta'}^{i_1,i_2}$ for some $k\in \Z_2$; however, if $k=1$, then applying either equation to 1 gives $c=-1$ or
$c^3=-1$, depending on $(i_1,j_1,i_2,j_2)$. Thus necessarily $k=0$.)
\begin{proof} Invariance of the double sign implies that
$(i_1+i_2,j_1+j_2)=(i_1'+i_2',j_1'+j_2')$. Since $\phi$ induces an isomorphism from $C_0$ to $D_0$, Remark \ref{R: A0} gives
$(i_1,j_1)=(i_1',j_1')$, proving the first part of the statement. Since $C_0=D_0=\C$ as subspaces, we have $\phi(\C)=\C $. If
$(i_1,j_1)\neq(1,1)$, then Remark \ref{R: A0} implies that $\phi(1)=1$. Remark
\ref{R: U1} then implies that $\phi\in G_2$, and by orthogonality, $\phi(\langle u\rangle )=\langle u\rangle $, whence $\phi\in
G_2^{\langle u\rangle }$. This proves the
second part in case $(i_1,j_1)\neq(1,1)$. If $(i_1,j_1)=(1,1)$, then by Proposition \ref{P: Ca} we have
\[\begin{array}{lll}\phi_1F_\alpha^{1,1}=F_{\alpha'}^{1,1}\phi&\text{and}&\phi_2F_\beta^{1,1}=F_{\beta'}^{1,1}\phi\end{array}\]
for some pair $(\phi_1,\phi_2)$ of triality components of $\phi$. Applying both both equations to 1, we get
$\phi_1(1)=\phi_2(1)=\overline{\phi(1)}$. Then $\phi_1=R_{\overline{\phi_2(1)}}\phi$ and $\phi_2=L_{\overline{\phi_1(1)}}\phi$
give
\[\begin{array}{lll}\phi_1=R_{\phi(1)}\phi&\text{and}&\phi_2=L_{\phi(1)}
\phi.\end{array}\]
By Remark \ref{R: A0} we know that $\phi(1)=e$ for some $e\in\C$ satisfying $e^3=1$ with respect to the
multiplication in $\OO$. Set $x=\phi^{-1}(v)$ and
$y=\phi^{-1}(z)$.
Then
\[\phi(xy)=\phi_1(x)\phi_2(y)=(ve)(ez)=(\overline{e}v)(z\overline{e})=\overline{e}((vz)\overline{e})=\overline{e}(e(vz))=vz,\]
where the third and fifth equalities hold since $uv'=-v'u$ for each $v'\in\C^\bot$, and the forth and last
equalities follow from
alternativity of $\OO$. Since $\phi$ maps $\C^\bot$ to itself, we in particular have $xy\bot u$. Thus $(u,x,y)$
is a Cayley triple, and there exists $\rho'\in G_2^{\langle u\rangle}$ mapping $(u,v,z)$ to $(u,x,y)$. Moreover,
$(x,ux,y,uy,xy,(ux)y)$ is a basis of $\C^\bot$. Now $e=\cos(2\pi n/3)+u\sin(2\pi n/3)$ for some $n\in\Z$, and
then
$\phi(u)=(-1)^\varepsilon(u\cos(2\pi n/3)-\sin(2\pi n/3))$ for some $\varepsilon\in\Z_2$. By computations similar to those
used to determine
$\phi(xy)$ we can determine the image of $ux$, $uy$ and $(ux)y$ in terms of $u$, $v$ and $z$. As a result we see that $\phi$ maps
\[\begin{array}{lll}(x,ux,y,uy,xy,(ux)y)&\text{to}&(v,(-1)^\varepsilon uv,z,(-1)^\varepsilon
uz,vz,(-1)^\varepsilon(uv)z).\end{array}\]
The map $\widehat\varepsilon\in G_2^{\langle u\rangle }$ was defined by $(u,v,z)\mapsto((-1)^\varepsilon u,v,z)$. By the above,
$\phi\rho'\widehat\varepsilon$ acts as identity on
$\C^\bot$, and
the matrix of its restriction to $\C$ in the basis $(1,u)$ is $\widehat{(2\pi n/3)}_0$. The same is true for
$B_d$ with $d=\cos(\pi n/3)+u\sin(\pi n/3)$. Since $B_d=B_{-d}$, we have $\phi\rho'\widehat\varepsilon=B_c$ for some
$c\in\C$ with $c^3=1$. The proof is complete upon observing that $\rho=(\rho'\widehat\varepsilon)^{-1}\in
G_2^{\langle u\rangle }$.
\end{proof}

In particular, this shows that the twelve families determined by the values of $i_1$, $i_2$, $j_1$ and $j_2$ are closed under
isomorphisms. Since $A\simeq B$ implies $A_0\simeq
B_0$, we have
\[\mathcal D_{1,1,3,3}=\coprod_{\genfrac{}{}{0pt}{}{i_1,j_1,i_2,j_2\in\Z_2}{i_2=1\vee
j_2=1}}\mathcal D_{1,1,3,3}^{i_1,j_1,i_2,j_2},\]
where $\mathcal D_{1,1,3,3}^{i_1,j_1,i_2,j_2}$ is the full subcategory of all
$A\in\mathcal D_{1,1,3,3}$ with double sign\linebreak $((-1)^{i_1+i_2},(-1)^{j_1+j_2})$ and such that
the double sign of $A_0$ is $((-1)^{i_1},(-1)^{j_1})$. Thus classifying $\mathcal D_{1,1,3,3}$ amounts to classifying each
block in the above decomposition.

The next lemma expresses the condition that \eqref{E: Classification} holds for some $\rho$ and $c$, satisfying the
conditions of Proposition \ref{P: Iso}, in terms of $\xi$, $\eta$, $\xi'$ and $\eta'$.

\begin{Lma}\label{L: xieta} Let $(i_1,i_2),(j_1,j_2)\in\Z_2^2$ with $i_2=1$ or $j_2=1$ and let $\xi,\eta,\xi',\eta'\in \R$.
\begin{enumerate}[(i)]
\item If $(i_1,j_1)\neq(1,1)$, then \eqref{E: Classification} holds for some $\rho\in G_2^{\langle u\rangle }$ and with $c=1$ if
and only if 
there exist $m,n\in\Z$ such that
\[(\xi',\eta')=\left\{ \begin{array}{ll} \pm(\xi+\pi m/3,\eta+\pi n) & \text{if\ } (i_2,j_2)=(0,1),\\
                                         \pm(\xi+\pi m,\eta+\pi n/3) & \text{if\ } (i_2,j_2)=(1,0),\\
                                         \pm(\xi+\pi m/3,\eta+(m+3n)\pi/3) & \text{if\ } (i_2,j_2)=(1,1).\end{array}\right.\]

\item If $(i_1,j_1)=(1,1)$, then \eqref{E: Classification} holds for some $\rho\in G_2^{\langle u\rangle }$ and $c\in \C$ with
$c^3=1$ if
and only if there exist $k,m,n\in\Z$ such that
\[(\xi',\eta')=\left\{ \begin{array}{ll} \pm(\xi+\pi m/3,\eta+\pi n/3) & \text{if\ } (i_2,j_2)\neq(1,1),\\
                                         \pm(\xi+\pi m/3,\eta+(2k+m+3n)\pi/3) & \text{if\ } (i_2,j_2)=(1,1).\end{array}\right.\]

\end{enumerate}
\end{Lma}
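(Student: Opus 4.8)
The plan is to invoke Proposition~\ref{P: Iso}, which already reduces the question to deciding for which parameters the system \eqref{E: Classification} admits a solution with $\rho\in G_2^{\langle u\rangle}$ and with the prescribed $c$ (namely $c=1$ in case (i), $c^3=1$ in case (ii)); the proof is then an explicit analysis of \eqref{E: Classification}, carried out by restricting every operator occurring in it to the two invariant subspaces $\C$ and $\C^\bot$.

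First I would record the restrictions of the relevant operators to $\C^\bot$. Using that $\OO$ is alternative and that $xc=\overline c x$ for $x\in\C^\bot$, $c\in\C$, one gets: $B_{\overline c}$ restricts to the identity on $\C^\bot$; $L_c$ and $R_c$ restrict to left multiplication by $c$ and by $\overline c$; the conjugation $C_{\cos\theta+u\sin\theta}$ restricts to left multiplication by $\cos 2\theta+u\sin 2\theta$; $\sigma_u$ restricts to the identity; and $\sigma_W$ restricts to the reflection visible in the displayed matrix. Identifying $\C^\bot$ with $\C^{3}$ by declaring $x\mapsto ux$ to be multiplication by $i$ (so that left multiplication by a norm-one element of $\C$ is multiplication by a unit scalar), this means $F_{\theta}^{k_1,0}$ acts on $\C^\bot$ as the scalar $e^{2i\theta}$ while $F_{\theta}^{k_1,1}$ acts as the conjugate-linear map $w\mapsto-e^{2i\theta}\overline w$. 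Writing $\rho=\sigma\widehat\varepsilon$ with $\sigma$ in the subgroup $SU_3<G_2$ of automorphisms fixing $u$ and $\varepsilon\in\Z_2$, the map $\sigma$ restricts to a $\C$-linear unitary of determinant one on $\C^\bot$, and $\widehat\varepsilon$ restricts to the $\varepsilon$-th power of complex conjugation. Finally a direct computation on $\C$ shows that, under the hypotheses on $c$, both equations of \eqref{E: Classification} hold automatically on $\C$, so all the content lies on $\C^\bot$.

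Carrying out the conjugations on $\C^\bot$ gives $\rho F_{\theta}^{k_1,0}\rho^{-1}=e^{2i(-1)^\varepsilon\theta}\Id$, and $\rho F_{\theta}^{k_1,1}\rho^{-1}$ acts as $w\mapsto-e^{2i(-1)^\varepsilon\theta}(\sigma\overline\sigma^{-1})\overline w$. The key point is that $\sigma\overline\sigma^{-1}$ lies in $SU_3$, so if it equals a scalar $\mu\Id$ then $\mu^{3}=1$, and conversely every cube root of unity is so realized, e.g.\ by a power of the conjugation $C_{\cos(\pi/3)+u\sin(\pi/3)}\in G_2^{\langle u\rangle}$. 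Feeding these expressions, together with the restrictions of $L_c$ and $R_c$, into the two equations of \eqref{E: Classification} turns each equation either into a scalar identity $e^{2i(\cdots)}=e^{2i(\cdots)}$ (when the corresponding $k_2$ equals $0$) or into the demand that $\sigma\overline\sigma^{-1}$ be an explicit scalar $\mu\Id$ with $\mu$ a monomial in $c$, $e^{2i(-1)^\varepsilon\theta}$ and $e^{2i\theta'}$ (when $k_2=1$). I would then go through the three possibilities $(i_2,j_2)\in\{(0,1),(1,0),(1,1)\}$: a scalar identity translates to a congruence modulo $\pi$, a condition $\mu^{3}=1$ (after cancelling $c^{3}=1$ in case (ii)) translates to a congruence modulo $\pi/3$, and in the case $(i_2,j_2)=(1,1)$, where both equations are of the semilinear type, the two scalars $\mu$ must coincide, which couples $\xi'$ and $\eta'$ and, together with the freedom in $c$, introduces the third integer $k$. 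Inserting $c=1$ (case (i)) or $c\in\{1,e^{2\pi i/3},e^{4\pi i/3}\}$ (case (ii)) then yields exactly the listed formulae, the common factor $(-1)^\varepsilon$ producing the global $\pm$.

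I expect the main obstacle to be organizational rather than conceptual: pinning down the sign and double-angle conventions in the restrictions of $C_{\cos\theta+u\sin\theta}$ and of $\sigma_W$ to $\C^\bot$, and then keeping the six sub-cases straight so that the integer parameters emerge with precisely the coefficients $\pi m$, $\pi m/3$, $\pi n$, $\pi n/3$, $(2k+m+3n)\pi/3$ claimed. The one genuinely structural input — and the source both of the denominators $3$ and of the extra parameter in the case $(i_2,j_2)=(1,1)$ — is the observation that the scalars realizable as $\sigma\overline\sigma^{-1}$ with $\sigma\in SU_3<G_2$ are exactly the cube roots of unity.
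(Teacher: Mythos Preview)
Your approach is correct and shares the same structural core as the paper's: the scalars realizable by ``conjugating complex conjugation through $SU_3$'' are exactly the cube roots of unity, and this is what produces the denominators $3$ and, in the doubly semilinear case $(i_2,j_2)=(1,1)$, the coupling between $\xi'$ and $\eta'$ together with the extra integer parameter.

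The route differs in presentation. You split \eqref{E: Classification} along $\OO=\C\oplus\C^\bot$, dispose of the $\C$-part as automatic under the hypotheses on $c$, and then work entirely in the model $\C^\bot\cong\C^{3}$ with its $SU_3$-action and the antilinear involution induced by $\sigma_W$. The paper instead keeps the full operators and manipulates \eqref{E: Classification} via commutation relations (such as $\sigma_u B_c=B_{\overline c}\sigma_u$ and $\sigma_W B_t=B_t\sigma_W$) down to the compact form
\[
C_{\overline c\,\overline{x'}\rho(x)}=\Sigma^{j_2}\rho\,\Sigma^{j_2}\rho^{-1},\qquad
C_{c\,\overline{y'}\rho(y)}=\Sigma^{i_2}\rho\,\Sigma^{i_2}\rho^{-1},
\]
with $\Sigma=\sigma_u\sigma_W$, and then invokes the fact that $C_t$ equals such a commutator for some $\rho\in G_2^{\langle u\rangle}$ precisely when $t^{6}=1$ (and equals the identity precisely when $t^{2}=1$). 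These two key facts are your observation in disguise: $\Sigma$ restricts to complex conjugation on $\C^\bot$, so $\Sigma\rho\,\Sigma\rho^{-1}$ restricts to $\overline\sigma\sigma^{-1}$, and $C_t$ restricts to the scalar $t^{2}$. Your version makes the role of $SU_3$ explicit and is arguably cleaner conceptually; the paper's version has the minor advantage of never needing to check the $\C$-part separately, and of reading off the final $x^{6},y^{6},(x\overline y)^{6}$ conditions directly.

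One small correction: in your identification, $\sigma_W$ restricts to ordinary complex conjugation on $\C^\bot$, so $F_\theta^{k_1,1}$ acts there as $w\mapsto e^{2i\theta}\overline w$, not $-e^{2i\theta}\overline w$. This is harmless for the argument, since the same factor appears on both sides of each equation and cancels.
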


\begin{proof} Let $(i_1,i_2),(j_1,j_2)\in\Z_2^2$ with $i_2=1$ or $j_2=1$. For any $c\in\C$ satisfying (i) and
(ii) of
Lemma \ref{P: Iso} and any $\rho\in G_2^{\langle u\rangle} $, \eqref{E: Classification} asserts that
\[\begin{array}{lll}
L_c\rho C_x\sigma_u^{j_1}\sigma_W^{j_2}\rho^{-1}B_{\overline{c}}=C_{x'}\sigma_u^{j_1}\sigma_W^{j_2}&\wedge&R_c\rho
C_y\sigma_u^{i_1}\sigma_W^{i_2}\rho^{-1}B_{\overline{c}}=C_{y'}\sigma_u^{i_1}\sigma_W^{i_2}
\end{array}\]
where $x=\cos\xi+u\sin\xi$, $y=\cos\eta+u\sin\eta$, and likewise for $x'$, $y'$, $\xi'$ and $\eta'$. Since $\rho
C_t\rho^{-1}=C_{\rho(t)}$ and $\sigma_W B_t=B_t\sigma_W$
for each $t\in\langle 1,u\rangle $, this is equivalent to
\[\begin{array}{lll}
 C_{\rho(x)}\rho\sigma_u^{j_1}\sigma_W^{j_2}\rho^{-1}=L_{\overline{c}}C_{x'}\sigma_u^{j_1}B_c\sigma_W^{j_2}&\wedge&C_{\rho(y)}
\rho\sigma_u^{i_1}\sigma_W^{i_2}\rho^{-1}=R_{\overline{c}}C_{y'}\sigma_u^{i_1}B_c\sigma_W^{i_2}.
\end{array}\]
If $j_1=i_1=1$, then $\sigma_u^{j_1} B_c=B_{\overline c}\sigma_u^{j_1}$ and $\sigma_u^{i_1} B_c=B_{\overline c}\sigma_u^{i_1}$
since 
 $\sigma_u B_c=B_{\overline c}\sigma_u$. In the other cases, this holds trivially as then $c=1$ implies that
$B_c=B_{\overline c}=\Id$. Moreover, $\sigma_u$
commutes with both $\sigma_W$ and $\rho^{-1}$. Multiplying the equations from the right by $\sigma_u^{j_1+j_1}$ and
$\sigma_u^{i_1+i_2}$, respectively, and writing $\Sigma$ for $\sigma_u\sigma_W$, we therefore have
\[\begin{array}{lll}
 C_{\rho(x)}\rho\Sigma^{j_2}\rho^{-1}=L_{\overline{c}}C_{x'}B_{\overline{c}}\Sigma^{j_2}&\wedge&C_{\rho(y)
}
\rho\Sigma^{i_2}\rho^{-1}=R_{\overline{c}}C_{y'}B_{\overline{c}}\Sigma^{i_2}.
\end{array}\]
We finally collect the multiplication operators in each equation in the left hand side, and multiply
them. Using commutativity of the subalgebra $\C$ of $\OO$, and the fact that $c^3=1$, it is straight-forward to
check that the above is equivalent to
\begin{equation}\label{E: Simplified}
\begin{array}{lll}
 C_{\overline{c}\overline{x'}\rho(x)}=\Sigma^{j_2}\rho\Sigma^{j_2}\rho^{-1}&\wedge&C_{c
\overline{y'}\rho(y)}=\Sigma^{i_2}\rho\Sigma^{i_2}\rho^{-1}.\end{array}
\end{equation}
The right hand sides both belong to $G_2$, since for each $k\in\Z_2$, $\Sigma^{k}$ coincides with $\widehat
k:(u,v,z)\mapsto((-1)^ku,v,z)$. If $t\in\mathbb S^1$, then clearly $C_t=\Id$ if and only if $t^2=1$, while
$C_t=\Sigma\rho\Sigma\rho^{-1}$ for some $\rho\in G_2^{\langle u \rangle}$ if and only if $t^6=1$. (To see this, assume that
$t^6=1$. Then $(rt)^3=1$ for some $r\in\{\pm1\}$, and $C_t=C_{rt}$. Taking $s\in\C$ with $s^2=\overline{rt}$ we get
$C_t=\Sigma C_s\Sigma C_s^{-1}$. Since $s^6=1$, we can apply the fact that $w\in\mathbb S^1$ satisfies $C_w\in G_2^{\langle u
\rangle}$ if and only if $w^6=1$. Conversely, if $t^6\neq1$, then this fact implies that $C_t\notin G_2^{\langle u \rangle}$, and
in particular $C_t\neq\Sigma\rho\Sigma\rho^{-1}$ for any $\rho\in G_2^{\langle u \rangle}$.)

Returning to \eqref{E: Simplified}, we notice that $(\rho(x),\rho(y))\in\{(x,y),(\overline{x},\overline{y})\}$ for any $\rho\in
G_2^{\langle u \rangle}$. Note also that the right hand sides coincide whenever $(i_2,j_2)=(1,1)$. Altogether this gives
that
\eqref{E: Simplified} holds for \emph{some} $\rho\in G_2^{\langle u\rangle }$ and with $c=1$ 
if and only if
\[\begin{array}{cl} \left({x'}^6,{y'}^2\right)=\left(x^6,y^2\right) \vee \left({x'}^6,{y'}^2\right)=\left({\overline
x}^6,{\overline y}^2\right) & \text{if\ }(i_2,j_2)=(0,1),\\
                      \left({x'}^2,{y'}^6\right)=\left(x^2,y^6\right) \vee \left({x'}^2,{y'}^6\right)=\left({\overline
x}^2,{\overline y}^6\right) & \text{if\ }(i_2,j_2)=(1,0),\\
                      \left({x'}^6,\left(x'\overline{y'}\right)^2\right)=\left(x^6,(x\overline y)^2\right)\vee
\left({x'}^6,\left(x'\overline{y'}\right)^2\right)=\left({\overline x}^6,(\overline x y)^2\right) & \text{if\ }
(i_2,j_2)=(1,1),\end{array}\]
while \eqref{E: Simplified} holds for \emph{some} $\rho\in G_2^{\langle
u\rangle }$ and \emph{some} $c\in\C$ with $c^3=1$ if and only if
\[\begin{array}{cl} \left({x'}^6,{y'}^6\right)=\left(x^6,y^6\right)\vee \left({x'}^6,{y'}^6\right)=\left({\overline
x}^6,{\overline y}^6\right) & \text{if\ } (i_2,j_2)\neq(1,1),\\
                      \left({x'}^6,\left(x'\overline{y'}\right)^6\right)=\left(x^6,(x\overline y)^6\right)\vee
\left({x'}^6,\left(x'\overline{y'}\right)^6\right)=\left({\overline x}^6,(\overline x y)^6\right) & \text{if\ }
(i_2,j_2)=(1,1).\end{array}\]
Writing these conditions in terms
of $\xi$, $\eta$, $\xi'$ and $\eta'$ completes the proof.
\end{proof}

Expressing these conditions in terms of the parameters $\alpha$ and $\beta$ yields a classification of each $\mathcal
D_{1,1,3,3}^{i_1,j_1,i_2,j_2}$ as follows.

\begin{Thm}\label{T: 1133} Let $(i_1,i_2),(j_1,j_2)\in\Z_2^2$ with $i_2=1$ or $j_2=1$. A classification of $\mathcal
D_{1,1,3,3}^{i_1,j_1,i_2,j_2}$ is
given by
\[M=\left\{ \OO_{G_{\alpha,0}^{j_1,j_2},G_{\beta,0}^{i_1,i_2}}|(\alpha,\beta)\in
\Pi\setminus\{(j_1+j_2)\pi/2,(i_1+i_2)\pi/2\}\right\}\]
where
\[\Pi=([0,\pi/2)\times[0,\pi))\cup(\{\pi/2\}\times[0,\pi/2]).\]
\end{Thm}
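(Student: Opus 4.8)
The strategy is to reduce the statement to a transversal computation, transporting the isomorphism criterion of Lemma \ref{L: xieta} through the change of variables given by \eqref{E: i1j1} and \eqref{E: i2j2}. Fix $(i_1,i_2),(j_1,j_2)\in\Z_2^2$ with $i_2=1$ or $j_2=1$, and abbreviate $A_{\alpha,\beta}=\OO_{G_{\alpha,0}^{j_1,j_2},G_{\beta,0}^{i_1,i_2}}$. Two facts are immediate from the preceding material. First, Proposition \ref{P: AE} together with Remark \ref{R: Reverse} shows that every $A\in\mathcal D_{1,1,3,3}^{i_1,j_1,i_2,j_2}$ is isomorphic to some $A_{\alpha,\beta}$ with $(\alpha,\beta)\in[0,\pi)^2$. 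Second, by Lemma \ref{L: Exclusion}, $A_{\alpha,\beta}\in\mathcal D_{1,1,3,3}$ if and only if $(\cos2\alpha,\cos2\beta)\neq((-1)^{j_1+j_2},(-1)^{i_1+i_2})$; since $\alpha,\beta\in[0,\pi)$ this excludes exactly the single pair $((j_1+j_2)\pi/2,(i_1+i_2)\pi/2)$ (with entries reduced mod $\pi$), which one checks lies in $\Pi$. So $\{A_{\alpha,\beta}\mid(\alpha,\beta)\in[0,\pi)^2\}$ with that one algebra removed is a dense subclass of the block, and the task becomes to select one representative $(\alpha,\beta)$ from each isomorphism class and to see that $\Pi$, with the bad pair deleted, is exactly such a choice.

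Next I would make the isomorphism relation on the parameters explicit. Given $(\alpha,\beta)$ and $(\alpha',\beta')$ in $[0,\pi)^2$, run the construction in the proof of Proposition \ref{P: AE} with $\gamma=0$: it produces reals $\xi,\eta$ (resp.\ $\xi',\eta'$) obtained from $\alpha,\beta$ (resp.\ $\alpha',\beta'$) by composing \eqref{E: i1j1} and \eqref{E: i2j2}, together with isomorphisms $A_{\alpha,\beta}\simeq\OO_{F_\xi^{j_1,j_2},F_\eta^{i_1,i_2}}$ and $A_{\alpha',\beta'}\simeq\OO_{F_{\xi'}^{j_1,j_2},F_{\eta'}^{i_1,i_2}}$. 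Hence $A_{\alpha,\beta}\simeq A_{\alpha',\beta'}$ if and only if $\OO_{F_\xi^{j_1,j_2},F_\eta^{i_1,i_2}}\simeq\OO_{F_{\xi'}^{j_1,j_2},F_{\eta'}^{i_1,i_2}}$, and Lemma \ref{L: xieta} turns the latter into an explicit system of congruences relating $(\xi,\eta)$ to $(\xi',\eta')$ modulo $\pi/3$, $\pi$, up to an overall sign (and, in the $(i_2,j_2)=(1,1)$ sub-case of part (ii), an extra integer). Substituting the expressions for $\xi,\eta$ and solving for $\alpha',\beta'$ in $[0,\pi)$ yields an explicit equivalence relation on $[0,\pi)^2$ that describes precisely when $A_{\alpha,\beta}\simeq A_{\alpha',\beta'}$. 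I would carry this out through the four cases of $(i_1,j_1)$ and the three cases of $(i_2,j_2)$, so that all twelve blocks are treated uniformly.

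With the relation in hand, the last step is to verify that $\Pi\setminus\{((j_1+j_2)\pi/2,(i_1+i_2)\pi/2)\}$ meets each equivalence class of pairs giving algebras in the block exactly once. For density, take the representative $A_{\alpha,\beta}$ of a given $A$ from the first paragraph and use the relation to move $(\alpha,\beta)$ into $\Pi$; for distinctness, show that two pairs in $\Pi$, other than the bad one, that are equivalent must coincide. Both parts split naturally into the generic region, where the relation acts freely, and the boundary lines $\alpha\in\{0,\pi/2\}$ and $\beta\in\{0,\pi/2\}$, where the relation degenerates and the precise shape of $\Pi$ (in particular the restriction of the line $\alpha=\pi/2$ to $\beta\le\pi/2$, and the status of the deleted corner) has to be matched against the orbit structure; one also uses, from the remark following the definition of $G_{\theta,\gamma}^{k_1,k_2}$, that distinct $\alpha\in[0,\pi)$ give distinct maps $G_{\alpha,0}^{j_1,j_2}$, so that the pairs genuinely parametrize the algebras.

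The step I expect to be the real obstacle is the middle one: pushing the modular and trigonometric arithmetic of \eqref{E: i1j1}, \eqref{E: i2j2} and Lemma \ref{L: xieta} through cleanly in every block, keeping exact track of which integer shifts (and, in the relevant sub-case, the extra parameter) are available, so that the twelve cases really do collapse to a single clean equivalence relation on $[0,\pi)^2$. Once that relation is pinned down, the transversal verification — including the somewhat delicate behaviour on the boundary of $\Pi$ and at the excluded point — is routine, but must be done carefully to confirm that $\Pi$ with that point removed is neither too small (density) nor too large (distinctness).
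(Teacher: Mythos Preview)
Your approach is exactly the paper's: transport the criterion of Lemma \ref{L: xieta} back through \eqref{E: i1j1} and \eqref{E: i2j2} with $\gamma=0$, obtain an equivalence relation on $[0,\pi)^2$, and check that $\Pi$ minus the excluded point is a transversal. The one thing you have not yet seen, and which the paper states as the outcome of the computation you correctly flag as the obstacle, is that in \emph{every one} of the twelve blocks the relation collapses to the single involution
\[
(\alpha',\beta')=(\alpha,\beta)\quad\text{or}\quad(\alpha',\beta')=(\pi-\alpha,\pi-\beta),
\]
so there is no case distinction left at the transversal stage and no genuine boundary subtlety: $\Pi$ is simply a fundamental domain for this involution on $[0,\pi)^2$.
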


\begin{proof} Let $\alpha,\beta,\alpha',\beta'\in[0,\pi)$ and let $(\xi,\eta)$ and 
$(\xi',\eta')$ be obtained from $(\alpha,\beta)$ and $(\alpha',\beta')$, respectively, by equations \eqref{E: i1j1} and
\eqref{E: i2j2}, with $\gamma=\gamma'=0$. In the case $(i_1,j_1,i_2,j_2)=(0,0,0,1)$ we thus have
\[\begin{array}{lll}
 (\xi,\eta)=\left(\frac{\alpha+2\beta}{3},-\beta\right)&\text{and}&(\xi',\eta')=\left(\frac{\alpha'+2\beta'}{3},-\beta'\right)
\end{array}\]
and similar expressions hold for the other cases. Using this, we write the conditions in Lemma \ref{L: xieta} in
terms of $(\alpha,\beta)$ and $(\alpha',\beta')$. Each condition then in fact turns out to be equivalent to
\begin{equation}\label{E: pi}
\begin{array}{lll}(\alpha',\beta')=(\alpha,\beta) &\vee& (\alpha',\beta')=(\pi-\alpha,\pi-\beta).\end{array}
\end{equation}
Thus if 
\[\OO_{G_{\alpha,0}^{j_1,j_2},G_{\beta,0}^{i_1,i_2}},\OO_{G_{\alpha',0}^{j_1,j_2},G_{\beta',0}^{i_1,i_2}}\in\mathcal
D_{1,1,3,3}^{i_1,j_1,i_2,j_2},\]
then they
are isomorphic if and only if \eqref{E: pi} holds, whence $M$ is
irredundant. Since each $A\in\mathcal D_{1,1,3,3}^{i_1,j_1,i_2,j_2}$ is isomorphic to
$\OO_{G_{\alpha,0}^{j_1,j_2},G_{\beta,0}^{i_1,i_2}}$ for some
$(\alpha,\beta)\in[0,\pi)^2$ different from $((j_1+j_2)\pi/2,(i_1+i_2)\pi/2)$, it follows that $M$ is exhaustive as well.
\end{proof}

\begin{Rk} The theorem and its proof imply that if
\[A=\OO_{G_{\alpha,0}^{j_1,j_2},G_{\beta,0}^{i_1,i_2}}\simeq \OO_{G_{\alpha',0}^{j_1,j_2},G_{\beta',0}^{i_1,i_2}}=B,\]
then $\widehat\varepsilon:A\to B$ is an isomorphism for some $\varepsilon\in\Z_2$, and conversely. Thus \emph{a posteriori} we
get a quasi-description of $\mathcal D_{1,1,3,3}^{i_1,j_1,i_2,j_2}$ as follows. Let $j=j_1+j_2$ and $i=i_1+i_2$ with $i_2=1$ or
$j_2=1$. The group $\Z_2$ acts on the set $\left(\mathbb S^1\times\mathbb S^1\right)_{ij}$ defined in Proposition \ref{P: 1,1,6}
by
\begin{equation}\label{E: epsilon}
\varepsilon\cdot (a,b)=(K^\varepsilon(a),K^\varepsilon(b)),
\end{equation}
and the map
\[\phantom{}_{\Z_2}(\mathbb S^1\times \mathbb S^1)_{ij}\to \mathcal
D_{1,1,3,3}^{i_1,j_1,i_2,j_2}\]
defined on objects by mapping $(a,b)=(\cos2\alpha+u\sin2\alpha,\cos2\beta+u\sin2\beta)$ with $\alpha,\beta\in[0,\pi)$
to
$\OO_{G_{\alpha,0}^{j_1,j_2},G_{\beta,0}^{i_1,i_2}}$, and on morphisms by $\varepsilon\mapsto\widehat\varepsilon$, is a dense
functor which detects non-isomorphic objects. However, we do not know how, if at all, this can be deduced \emph{a priori}.

Let now $i_2=j_2=0$ and set $i=i_1$ and $j=j_1$. Then the action of $Z_2$ on $(\mathbb S^1\times \mathbb S^1)_{ij}$ induced by
\eqref{E: Action0} coincides with the action defined in \eqref{E: epsilon}, and it follows from Proposition \ref{P: 1,1,6} that
the restriction of the functor $\mathcal F^{ij}$ defined there to
$\phantom{}_{\Z_2}(\mathbb S^1\times \mathbb S^1)_{ij}$ is a
quasi-description of $\mathcal D_{1,1,6}^{ij}$. With our observation earlier in this section that
$G_{\theta,0}^{k,0}=\lambda_t^{(k)}$, with $t=\cos 2\theta+u\sin 2\theta$, for each $\theta\in\R$ and $k\in\Z_2$, this brings the
treatment of $\mathcal D_{1,1,6}$ and $\mathcal D_{1,1,3,3}$ on
an equal footing.
\end{Rk}

\section{Classification of $\mathcal L_0$ and $\mathcal H_0$}
We will now classify $\phantom{}_{SO_3}\left(\T\times\T\right)$ and $\phantom{}_{SO_3}\left[\T\times\T\right]^2$ up to
isomorphism, by finding 
transversals for the actions of $SO_3$ on $\T\times\T$ and $\left[\T\times\T\right]^2$ defined in Section 4.1. The
classification of the first groupoid gives, via the functors $\mathcal J^{ij}$ from Section 4.2 , a classification of the
subcategory
$\mathcal L_0\subset \mathcal D$. The classification of the second becomes, after removing a finite set of objects, a
classification of the category $\phantom{}_{SO_3}S$ from Section 4.3, which in turn gives a classification of $\mathcal H_0\subset
\mathcal D$ via the functors $\mathcal K_*^{ij}$ introduced in that section.

Recall that we identify $\mathbb S(\HH)$ with $\T$, and fix an orthonormal pair $(u,v)$ in $\HH\cap1^\bot$. We now define
the sets
\[P=\{\cos\alpha+(u\cos\beta+v\sin\beta)\sin\alpha|\alpha\in (0,\pi)\wedge\beta\in[0,\pi]\},\]
and
\[P_0=\{\cos\alpha+u\sin\alpha|\alpha\in (0,\pi)\},\]
and, for each $A\subseteq (0,\pi)$ and $B\subseteq [0,\pi]$, we set 
\[P_{AB}=\{\cos\alpha+(u\cos\beta+v\sin\beta)\sin\alpha|\alpha\in A \wedge \beta\in B\}.\]

\begin{Prp}\label{P: T2} A transversal for the action of $SO_3$ on $\T\times\T$ is given by
\[M=(\{\pm1\}\times\{\pm1\}) \cup(\{\pm1\}\times P_0) \cup (P_0\times\{\pm1\})\cup (P_0\times P).\]
\end{Prp}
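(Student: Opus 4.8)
## Proof Proposal

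The plan is to show that $M$ meets every $SO_3$-orbit on $\mathbb S^3\times\mathbb S^3$ exactly once, by analyzing the orbit structure according to how many of the two coordinates are central (i.e.\ lie in $\{\pm1\}=\mathbb S^3\cap\langle 1\rangle$). Recall that for $q\in\mathbb S^3$ the map $\kappa_q$ fixes $1$ and acts on $1^\bot\simeq\mathbb R^3$ as an arbitrary rotation, so $\{\kappa_q\mid q\in\mathbb S^3\}=SO(1^\bot)=SO_3$ acting in the standard way on the 2-sphere $\mathbb S(1^\bot)$. For a non-central $a=\cos\alpha+w\sin\alpha$ with $w\in\mathbb S(1^\bot)$ and $\alpha\in(0,\pi)$, the real part $\cos\alpha$ is an $SO_3$-invariant, and $SO_3$ acts transitively on the set of $w$ with fixed $\alpha$; thus the orbit of a single non-central coordinate has $P_0$ as a transversal, while $\{\pm1\}$ is the transversal for the two central coordinates, giving the three "mixed'' pieces $\{\pm1\}\times\{\pm1\}$, $\{\pm1\}\times P_0$, and $P_0\times\{\pm1\}$ immediately from transitivity on spheres.

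The substantive case is $(a,b)$ with both coordinates non-central. Write $a=\cos\alpha+w_1\sin\alpha$, $b=\cos\beta'+w_2\sin\beta'$ with $w_1,w_2\in\mathbb S(1^\bot)$ and $\alpha,\beta'\in(0,\pi)$. The real parts $\cos\alpha,\cos\beta'$ are invariants, and the remaining data is the unordered... no, the \emph{ordered} configuration $(w_1,w_2)$ of two unit vectors in $\mathbb R^3$, modulo the diagonal $SO_3$-action. First I would dispose of the degenerate subcase $w_1=\pm w_2$: here $b$ is already in $\langle 1,w_1\rangle$, so by a rotation we may take $w_1=u$, and then $(a,b)\in P_0\times P_0$; since $P_0\subseteq P$ this lands in $P_0\times P$, and one checks the remaining freedom (the stabilizer of $u$ in $SO_3$, a circle of rotations about $u$, which fixes $u$ and hence fixes any $b\in\langle 1,u\rangle$) pins $(a,b)$ down uniquely. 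In the generic subcase $w_1,w_2$ linearly independent, the invariant is the angle between them, equivalently $\langle w_1,w_2\rangle\in(-1,1)$; by a rotation carrying $w_1\mapsto u$ and rotating about $u$ to bring $w_2$ into the half-plane $\langle u,v\rangle$ with $v$-component $\ge 0$, we get $w_1=u$, $w_2=u\cos\beta+v\sin\beta$ with $\beta\in(0,\pi)$ (this is exactly an element of $P$), so $(a,b)\in P_0\times P$ again. I would then verify this representative is unique: any $\kappa_q$ fixing the representative must fix $u$ (as $a$ is non-central), hence be a rotation about $u$; such a rotation fixes $w_2=u\cos\beta+v\sin\beta$ only if it is the identity (since $\sin\beta\ne 0$), so there is no further identification.

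To assemble these into the claimed transversal: every orbit has a representative in one of the four listed pieces by the above, and the four pieces are pairwise disjoint and each is irredundant, because membership in a given piece is detected by $SO_3$-invariants ($\Re a$, $\Re b$, and whether $a$, $b$ are central) together with the uniqueness arguments just given. The main obstacle I anticipate is the bookkeeping in the both-non-central case — specifically making the reduction to the normalized form $w_1=u$, $w_2\in\langle u,v\rangle$ with nonnegative $v$-component genuinely canonical (handling the boundary values $\beta\in\{0,\pi\}$, which collapse into the $w_1=\pm w_2$ subcase, and confirming $\beta=\pi$ is excluded so that the interval is $[0,\pi]$ only after one folds the two half-planes, i.e.\ the range of $\beta$ in the definition of $P$). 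Once the stabilizer computation (stabilizer of a non-central element is a maximal torus $SO_2\subset SO_3$) is in hand, uniqueness is routine; the existence/surjectivity half is just transitivity of $SO_3$ on $\mathbb S^2$ applied twice in succession.
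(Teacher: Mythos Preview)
Your proposal is correct and follows essentially the same approach as the paper's proof: reduce by centrality of the coordinates, use transitivity of $SO_3$ on $\mathbb S(1^\bot)$ to normalize the first non-central coordinate to lie in $P_0$, then use the residual circle stabilizer of $u$ to normalize the second. The paper treats the both-non-central case uniformly (writing $b'=\cos\alpha'+(u\cos\beta'+w'\sin\beta')\sin\alpha'$ with $w'\in\mathbb S(\langle 1,u\rangle^\bot)$ and rotating $w'$ to $v$) rather than splitting into your degenerate/generic subcases, and its irredundancy argument is a direct coordinate comparison rather than a stabilizer computation, but these are organizational rather than substantive differences. Your worry about the endpoint $\beta=\pi$ is unnecessary: $\beta=\pi$ is included in $P$ and corresponds precisely to your degenerate case $w_2=-u$, with no double-counting since $\cos\beta$ determines $\beta$ uniquely on $[0,\pi]$.
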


\begin{proof} First we show that $M$ exhausts the orbits. Take any $(a,b)\in\T\times\T$. If $a\in\langle 1\rangle$,
then
$(a,b)=(\pm1,\cos\alpha+w\sin\alpha)$ for some $\alpha\in[0,\pi]$ and $w\in\mathbb S\left(1^\bot\right)$.
Since
$SO_3$ acts transitively on $\mathbb S\left(1^\bot\right)$, there exists $q\in\T$ such that $qw\overline{q}=u$, and then
\[\kappa_q\cdot(a,b)\in(\{\pm1\}\times\{\pm1\}) \cup(\{\pm1\}\times P_0).\] 
If $a\notin\langle 1\rangle$, then $a=\cos\alpha+w\sin\alpha$ for some $\alpha\in(0,\pi)$ and $w\in\mathbb S\left(1^\bot\right)$.
As above
there exists
$q\in\T$ such that $\kappa_q\cdot(a,b)=(\cos\alpha+u\sin\alpha,b')$, for some $b'\in \T$. Then
$b'=\cos\alpha'+(u\cos\beta'+w'\sin\beta')\sin\alpha'$ for some $\alpha'\in(0,\pi)$, $\beta'\in[0,\pi]$ and
$w'\in\mathbb S\left(\langle 1,u\rangle^\bot\right)$. Moreover, the subgroup of $SO_3$ consisting of all elements fixing $u$ acts
transitively on $\langle v,uv\rangle$ and
thus there exists $q'\in\T$ such that $q'w'\overline{q'}=v$.
Altogether $\kappa_{q'q}\cdot(a,b)\in(P_0\times\{\pm1\})\cup (P_0\times P)$. Thus $M$ is exhaustive. To prove irredundancy, we
recall that conjugation by a unit quaternion fixes $1\in \HH$.
Thus if $\alpha,\alpha'\in[0,\pi]$ satisfy $\kappa_q(\cos\alpha+u\sin\alpha)=\cos\alpha'+u\sin\alpha'$ for some $q\in\T$, then
$\alpha=\alpha'$. This implies that no orbit intersects more than one of the four sets
$\{\pm1\}\times\{\pm1\}$, $\{\pm1\}\times P_0$, $P_0\times\{\pm1\}$ and $P_0\times P$, and that the first three of these sets are
irredundant. It moreover proves that if $\kappa_q\cdot(a,b)=(a',b')$ for some $q\in\T$ and $(a,b),(a',b')\in P_0\times P$, then
$\kappa_q(a)=a'$ gives $a=a'$ and $\kappa_q(u)=u$, and by coordinatewise comparison of $b$ and $b'$ one concludes that they
coincide. Thus $P_0\times P$ is irredundant.
\end{proof}

Thus we have classified $\phantom{}_{SO_3}\left(\T\times\T\right)$, and a classification of $\mathcal L_0^{ij}$ is obtained for
each $(i,j)\in\Z_2^2$ by applying the functor $\mathcal J^{ij}$ of Theorem \ref{T: Quasi1} to this classification.

In order to classify $\phantom{}_{SO_3}\left[\T\times\T\right]^2$, we will first obtain a transversal for
the action of $SO_3$ on $\left[\T\times\T\right]$, and pass to the action of $SO_3$ on $\left[\T\times\T\right]^2$. The strategy
is detailed in the following
lemma, for which we set
\begin{equation}\label{E: M}
M_1=(\{1\}\times\{\pm1\}) \cup(\{1\}\times P_0) \cup (P_0\times\{1\})\cup (P_0\times P)^+,
\end{equation}
with 
\[(P_0\times P)^+=\{(a,b)\in P_0\times P|(\Re(a),\Re(b))\succeq(0,0)\},\]
where $\succeq$ is the relation ``greater than or equal to'' with respect to the lexicographic order on $\R^2$, and $\Re$ denotes
the orthogonal projection onto $\langle 1\rangle$.

\begin{Lma}\label{L: Generalities} Let $M_1$ be as in \eqref{E: M}.
\begin{enumerate}[(i)]
\item The quotient map $\T\times\T\to\left[\T\times\T\right]$ maps $M_1$ bijectively onto $[M_1]$, and $[M_1]$ is a transversal of
the action
of $SO_3$ on $\left[\T\times\T\right]$.
\item The set $[M_1]\times [\T\times \T]$ exhausts the orbits of the action of $SO_3$ on $\left[\T\times\T\right]^2$.
\item Two elements $([a_1,b_1],[a_2,b_2])$ and $([c_1,d_1],[c_2,d_2])$ in $[M_1]\times \left[\T\times\T\right]$ are in the same
orbit if and
only if $[a_1,b_1]=[c_1,d_1]$ and $\kappa_q\cdot[a_2,b_2]=[c_2,d_2]$ for some $q\in \T$ with $\kappa_q\in\Stab([a_1,b_1])$.
\end{enumerate}
\end{Lma}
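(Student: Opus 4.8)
The statement is a purely group-theoretic bookkeeping lemma, so the plan is to unwind the definitions of the action of $SO_3$ on $\left[\T\times\T\right]$ and on $\left[\T\times\T\right]^2$ and exploit the transversal from Proposition \ref{P: T2}. For part (i), I would first observe that the quotient map $\T\times\T\to\left[\T\times\T\right]$ identifies $(a,b)$ only with $(-a,-b)$; since every element of $M_1$ has first coordinate in $\{1\}\cup P_0$, and $\{1\}\cup P_0$ meets each pair $\{w,-w\}$ in at most one point (as $P_0$ consists of elements with $\sin\alpha>0$ for $\alpha\in(0,\pi)$, and $1\notin -P_0$), no two distinct elements of $M_1$ become identified, giving the bijection onto $[M_1]$. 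For exhaustiveness and irredundancy of $[M_1]$ as a transversal, I would compare with the transversal $M$ of Proposition \ref{P: T2}: given any $[a,b]$, lift it to $(a,b)\in\T\times\T$ and move it by some $\kappa_q$ into $M$; if it lands outside $P_0\times P$ the image already lies in $M_1$, while if it lands in $P_0\times P$ we may further apply $\kappa_{-1}=\Id$ (trivially) and instead use the sign freedom $(a,b)\sim(-a,-b)$ in the quotient: note $-P_0\times(-P)$ has representatives with $\Re$ of opposite signs, so exactly one of $[(a,b)]$'s two $SO_3\cdot M$-representatives satisfies the lexicographic positivity condition defining $(P_0\times P)^+$. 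Irredundancy then follows from irredundancy of $M$ together with the fact that conjugation fixes $1$, so distinct elements of $[M_1]$ cannot lie in a common orbit.

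For part (ii), I would argue directly: given $([a_1,b_1],[a_2,b_2])\in\left[\T\times\T\right]^2$, part (i) provides $q\in\T$ with $\kappa_q\cdot[a_1,b_1]\in[M_1]$; since the action on $\left[\T\times\T\right]^2$ is the diagonal one, $\kappa_q\cdot([a_1,b_1],[a_2,b_2])=(\kappa_q\cdot[a_1,b_1],\kappa_q\cdot[a_2,b_2])\in[M_1]\times\left[\T\times\T\right]$, which is exactly what exhaustiveness requires.

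For part (iii), the ``if'' direction is immediate: if $\kappa_q\in\Stab([a_1,b_1])$ and $\kappa_q\cdot[a_2,b_2]=[c_2,d_2]$, then applying $\kappa_q$ to $([a_1,b_1],[a_2,b_2])$ yields $([a_1,b_1],[c_2,d_2])=([c_1,d_1],[c_2,d_2])$, since $[c_1,d_1]=[a_1,b_1]$. For the ``only if'' direction, suppose $\kappa_q\cdot([a_1,b_1],[a_2,b_2])=([c_1,d_1],[c_2,d_2])$ with both elements in $[M_1]\times\left[\T\times\T\right]$. Then $\kappa_q\cdot[a_1,b_1]=[c_1,d_1]$, and since $[a_1,b_1],[c_1,d_1]$ both lie in the transversal $[M_1]$ from part (i), irredundancy forces $[a_1,b_1]=[c_1,d_1]$; hence $\kappa_q\in\Stab([a_1,b_1])$, and the second-coordinate equation $\kappa_q\cdot[a_2,b_2]=[c_2,d_2]$ is the remaining condition. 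The main obstacle, such as it is, lies in part (i): making precise why the sign ambiguity $(a,b)\sim(-a,-b)$ combined with the $SO_3$-action lets one normalize into $M_1$, and in particular verifying that the lexicographic condition defining $(P_0\times P)^+$ selects a unique representative without destroying irredundancy inherited from $M$. This requires a careful case analysis of when $\Re(a)=0$ or $\Re(b)=0$ (i.e.\ when $\alpha=\pi/2$), but it is elementary once one notes that $\kappa_q$ fixes $\Re$ and that negation reverses the sign of $\Re$ in both coordinates simultaneously.
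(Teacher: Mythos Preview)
Your arguments for parts (ii) and (iii) are fine and essentially coincide with what the paper does (which simply records that these follow formally from (i)). The bijectivity claim in (i) is also correctly argued: since the first coordinate of every element of $M_1$ lies in $\{1\}\cup P_0$, and this set contains no pair $\{w,-w\}$, the quotient map is injective on $M_1$.

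However, your exhaustiveness argument for (i) has a genuine gap. The assertion ``if it lands outside $P_0\times P$ the image already lies in $M_1$'' is false: for instance $(-1,u)\in\{\pm1\}\times P_0\subset M$ but $(-1,u)\notin M_1$, and passing to the quotient gives $[-1,u]=[1,-u]$ with $-u\notin\{\pm1\}\cup P_0$, so the sign freedom alone does not help. The same problem arises inside $P_0\times P$: if $(a,b)\in P_0\times P$ with $\Re(a)<0$, then $(-a,-b)$ has the desired sign of $\Re$ in the first slot, but $-a\notin P_0$ (its $u$-coefficient is negative), so $(-a,-b)\notin P_0\times P$ at all. Thus the sign freedom by itself never moves you within $M$, contrary to what your sketch suggests.

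What is missing is an additional $SO_3$-move. The paper's proof observes that for each $(a,b)\in M$, either $(a,b)\in M_1$ or $(-\overline a,-\overline b)\in M_1$; and since $\kappa_{uv}$ sends $u\mapsto -u$, $v\mapsto -v$, it maps any $x\in\langle 1,u,v\rangle$ to $\overline x$, whence $[a,b]$ and $[-\overline a,-\overline b]=[\overline a,\overline b]$ lie in the same $SO_3$-orbit of $[\T\times\T]$. Concretely, on $P_0\times P$ this replaces $(\cos\alpha+u\sin\alpha,\ \cos\alpha'+(u\cos\beta'+v\sin\beta')\sin\alpha')$ by the element with angles $(\pi-\alpha,\pi-\alpha')$, reversing $(\Re(a),\Re(b))$ while staying inside $P_0\times P$; exactly one of the two then satisfies the lexicographic condition. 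Your plan needs this conjugation step, not just negation.

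Your irredundancy sketch is likewise incomplete: irredundancy of $M$ for the $SO_3$-action on $\T\times\T$ does not immediately give irredundancy of $[M_1]$ for the action on $[\T\times\T]$, because two elements $(a,b),(c,d)\in M_1$ could satisfy $(\kappa_q(a),\kappa_q(b))=(-c,-d)$ without being equal. The paper handles this by a short case analysis showing that in all cases either the real parts force $\varepsilon=0$ (and then irredundancy of $M$ applies), or $a,b,c,d\in 1^\bot$ and one checks directly that $\beta=\beta'$. You should add this step.
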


\begin{proof} The bijectivity statement in (i) follows from the fact that $M_1$ does not contain the negative of any of its
elements. To show that $[M_1]$ is exhaustive, we note that Proposition \ref{P: T2} implies that $[M]$ indeed is. Let thus
$(a,b)\in M$. By construction, $M_1$ contains either $(a,b)$ or $(-\overline{a},-\overline{b})$. Since $SO_3$ acts transitively on
$\mathbb S\left(1^\bot\right)$, the elements $[a,b]$ and $[\overline{a},\overline{b}]=[-\overline{a},-\overline{b}]$ are in the
same orbit of
the action of $SO_3$ on $\left[\T\times\T\right]$. Thus $[M_1]$ is exhaustive. 

To show that it is irredundant, assume that for some $(a,b)$ and
$(c,d)$ in $M_1$, $[a,b]$ and $[c,d]$ are in the same orbit. Then there exist $\varepsilon\in\Z_2$ and $q\in \T$ such that
$(\kappa_q(a),\kappa_q(b))=((-1)^\varepsilon c,(-1)^\varepsilon d)$. If one of $a$, $b$, $c$ and $d$ is not orthogonal to 1, then
either $\Re(a)$ and $\Re(c)$ are positive, or $\Re(b)$ and $\Re(d)$ are positive. In both cases $\varepsilon=0$, which implies
that
$(a,b)=(c,d)$ since $M_1\subset M$ is irredundant with respect to the action of $SO_3$ on $\T\times\T$. If all of $a$, $b$, $c$
and $d$ are orthogonal to 1, then $(a,b)=(u,u\cos\beta+v\sin\beta)$ and $(c,d)=(u,u\cos\beta'+v\sin\beta')$, for some
$\beta,\beta'\in[0,\pi]$. Then $(\kappa_q(a),\kappa_q(b))=((-1)^\varepsilon c,(-1)^\varepsilon d)$ implies that 
\[\begin{array}{lll} \kappa_q(u)=(-1)^\varepsilon u &\text{and} & \kappa_q(u)\cos\beta + \kappa_q(v)\sin\beta= (-1)^\varepsilon
u\cos\beta'+(-1)^\varepsilon\sin\beta',\end{array}\]
and substituting the first equation into the second gives $\cos\beta=\cos\beta'$, whence
$(a,b)=(c,d)$. This proves irredundancy and completes the proof of (i), and (ii) and (iii) follow.
\end{proof}

In view of the above lemma, two tasks remain in order to obtain a transversal for the action of $SO_3$ on
$\left[\T\times\T\right]^2$. The
first is to determine the stabilizers of the elements in $[M_1]$ with respect to the action of $SO_3$ on
$\left[\T\times\T\right]$.
Secondly, we must compute, for each such stabilizer $G$, a transversal for the restriction to $G\leq SO_3$ of the action of $SO_3$
on $\left[\T\times\T\right]$. These tasks are carried out in the following two lemmata.

\begin{Lma} Let $M_1$ be as in \eqref{E: M}, and let $(a,b)\in M_1$.
\begin{enumerate}[(i)]
\item If $a,b\in \langle 1\rangle$, then $\Stab([a,b])=SO_3$.
\item If $a,b\in\langle 1,u\rangle$ and neither both belong to $\langle 1\rangle$ nor both belong to $\langle u\rangle$, then
$\Stab([a,b])=\{\kappa_q|q\in\mathbb
S(\langle 1,u\rangle)\}$.
\item If $a,b\in \langle u\rangle$, then $\Stab([a,b])=\{\kappa_q|q\in\mathbb S(\langle 1,u\rangle\cup\langle
v,uv\rangle)\}$.
\item If $a,b\in \langle u,v\rangle$ and are linearly independent, then $\Stab([a,b])=\{\Id,\kappa_{uv}\}$.
\item Otherwise, $\Stab([a,b])$ is trivial.
\end{enumerate}
\end{Lma}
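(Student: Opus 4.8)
The plan is to translate the statement into the geometry of $SO_3$ acting on the $3$-space $1^\bot=\langle u,v,uv\rangle$. First I would recall that every $\kappa_q$ fixes $1$ and restricts to a rotation of $1^\bot$, that $q\mapsto\kappa_q$ is two-to-one with kernel $\{\pm1\}$, and that for $q=\cos(\theta/2)+w\sin(\theta/2)$ with $w\in\mathbb S\left(1^\bot\right)$ the map $\kappa_q$ is the rotation of $1^\bot$ through $\theta$ about the axis $\langle w\rangle$. The argument then rests on two elementary reductions. The first: since $\kappa_q$ fixes $1$, one has $[\kappa_q(a),\kappa_q(b)]=[a,b]$ precisely when there is a common sign $\varepsilon\in\{\pm1\}$ with $\kappa_q(a)=\varepsilon a$ and $\kappa_q(b)=\varepsilon b$, where each equation $\kappa_q(c)=\varepsilon c$ unwinds to $\Re(c)=\varepsilon\Re(c)$ together with $\kappa_q(\Im c)=\varepsilon\Im c$; in particular $\Re(c)\neq0$ forces $\varepsilon=1$. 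The second: for a unit vector $w\in 1^\bot$, $\kappa_q(w)=w$ iff $q\in\mathbb S(\langle 1,w\rangle)$, $\kappa_q(w)=-w$ iff $q\in\mathbb S\left(w^\bot\cap1^\bot\right)$, a rotation acting as $-\Id$ on a $2$-plane $\Pi\subseteq1^\bot$ is the half-turn about the line $\Pi^\bot\cap1^\bot$, and a rotation fixing a $2$-plane of $1^\bot$ pointwise is the identity. I would also record, from multiplicativity of the norm, that $u^\bot\cap1^\bot=\langle v,uv\rangle$ and $\langle u,v\rangle^\bot\cap1^\bot=\langle uv\rangle$, the half-turn about the latter being $\kappa_{uv}$.

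With these in hand the individual cases become short. For (i), $a=1$ and $b\in\{\pm1\}$ are fixed by all $\kappa_q$, so $\Stab([a,b])=SO_3$. For (iii), $a,b\in\{\pm u\}$, so $[\kappa_q(a),\kappa_q(b)]=[a,b]$ is equivalent to $\kappa_q(u)=\pm u$, which by the second reduction is exactly $q\in\mathbb S(\langle 1,u\rangle)\cup\mathbb S(\langle v,uv\rangle)$. For (iv), $\{a,b\}$ spans $\langle u,v\rangle$, so $\varepsilon=1$ forces $\kappa_q$ to fix $\langle u,v\rangle$ pointwise (hence $\kappa_q=\Id$) and $\varepsilon=-1$ forces $\kappa_q$ to act as $-\Id$ on $\langle u,v\rangle$ (hence $\kappa_q=\kappa_{uv}$), giving $\Stab([a,b])=\{\Id,\kappa_{uv}\}$. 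For (ii), the inclusion $\supseteq$ is immediate since $\kappa_q$ with $q\in\mathbb S(\langle 1,u\rangle)$ fixes $\langle 1,u\rangle$ pointwise; for the reverse inclusion I would pick an element of $\{a,b\}$ lying in $\langle 1,u\rangle\setminus\langle 1\rangle$ (one exists by hypothesis): if it is not a scalar multiple of $u$, its nonzero real part forces $\varepsilon=1$ and $\kappa_q$ fixes it, hence $\kappa_q(u)=u$; if it equals $\pm u$ I would instead use the other element, which by the remaining hypothesis is not in $\langle u\rangle$, and argue the same way (treating the subcase where that element lies in $\langle 1\rangle$, which still forces $\varepsilon=1$). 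Either way $\kappa_q(u)=u$, i.e.\ $q\in\mathbb S(\langle 1,u\rangle)$.

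The last case (v) is the one where the restriction $(a,b)\in M_1$ is essential, and this is where I expect most of the care to be needed. First I would read off from the explicit description of $M_1$ that the configurations not falling under (i)--(iv) are precisely those with $a\in P_0$ and $b\in P$ having a nonzero $v$-component (with $(a,b)$ not of type (iv)). For such a pair, the nonzero real part of $a$, or of $b$ when $a=u$, forces $\varepsilon=1$; then $\kappa_q(a)=a$ yields $\kappa_q(u)=u$, and $\kappa_q(b)=b$ combined with the nonzero $v$-component of $b$ yields $\kappa_q(v)=v$, so $\kappa_q$ fixes the plane $\langle u,v\rangle$ pointwise and equals $\Id$. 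Thus $\Stab([a,b])$ is trivial. The hard part of the whole proof is really the bookkeeping in (ii) and (v): keeping straight which of $a,b$ is used, whether its real part vanishes, and checking against the precise form of $M_1$ that no further configurations arise; the $SO_3$-geometry underneath is entirely standard.
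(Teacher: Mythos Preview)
Your proof is correct and follows essentially the same line as the paper's: both reduce $\kappa_q\in\Stab([a,b])$ to the existence of a common sign $\varepsilon$ with $\kappa_q(a)=\varepsilon a$ and $\kappa_q(b)=\varepsilon b$, and then analyse when this can happen. The only difference is packaging: the paper phrases the conditions $\kappa_q(w)=w$ and $\kappa_q(w)=-w$ as $q$ commuting, respectively anticommuting, with $w$ in $\HH$, records once that the centralizer of $w$ is $\langle 1,w\rangle$ (or all of $\HH$ if $w\in\langle1\rangle$) and the anticommutant is $\langle 1,w\rangle^\bot$ (empty if $\Re w\neq0$), and then lets the five cases fall out; you instead work with the rotation picture on $1^\bot$ and do the case split explicitly. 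Your version is more verbose but equally valid, and your careful reading of which pairs in $M_1$ land in case (v) is exactly the bookkeeping the paper leaves implicit in ``the assertions follow''.
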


\begin{proof} By definition of the action, $\kappa_q\in\Stab([a,b])$ if and only if 
\[(\kappa_q(a),\kappa_q(b))=((-1)^\varepsilon a,(-1)^\varepsilon b)\]
for some $\varepsilon\in\Z_2$, i.e.\ if and only if $q$
commutes with both $a$ and $b$, or anticommutes with them. For each
$w\in\HH$, the centralizer of $w$ is $\HH$ if $w\in\langle 1\rangle=Z(\HH)$, and $\langle 1,w\rangle$
otherwise, while the set of all quaternions anticommuting with $w$ is empty if $\Re(w)\neq 0$, and $\langle 1,w\rangle^\bot$
otherwise. The assertions follow.
\end{proof}

For the next lemma, we introduce, for any $m,n\in\{1,2,3,4\}$, the notation
\[\T_{mn}=\{r_1+r_2u+r_3v+r_4uv\in\T|(r_m,r_n)\succeq(0,0)\},\]
where $\succeq$ refers to the lexicographic order.

\begin{Lma}\label{L: Stabilizers} Let $M_1$ be as in \eqref{E: M}.
\begin{enumerate}[(i)]
\item A transversal for the action of $SO_3$ on $\left[\T\times\T\right]$ is given by $[M_1]$.

\item A transversal for the restriction of the action to $\{\kappa_q|q\in\mathbb S(\langle 1,u\rangle)\}$ is given by $[M_2]$,
where $M_2$ is the union of
\[\begin{array}{llll}(\{1\}\cup P_0)\times (\{\pm1\}\cup P),&\{v\}\times
\T_{12},&\text{and}&\left(P_{\left(0,\frac{\pi}{2}\right)(0,\pi)}\cup
P_{\{\frac{\pi}{2}\}\left(0,\frac{\pi}{2}\right)}\right)\times \T.\end{array}\]

\item A transversal for the restriction of the action to $\{\kappa_q|q\in\mathbb S(\langle 1,u\rangle\cup\langle v,uv\rangle)\}$
is given by $[M_3]$, where $M_3$ is the union of
\[\begin{array}{ll}\{1\}\times \left(\{\pm1\}\cup P_{(0,\pi)\left[0,\frac{\pi}{2}\right]}\right),&
P_{\left(0,\frac{\pi}{2}\right)\{0\}}\times (\{\pm1\}\cup P),\end{array}\]
\[\begin{array}{ll}\{u\}\times\left(\{1\}\cup P_{\left(0,\frac{\pi}{2}\right]\left[0,\pi\right]}\right),&
\{v\}\times\left(\T_{14}\cap\T_{24}\right),\end{array}\]
\[\begin{array}{llll}
P_{\left\{\frac{\pi}{2}\right\}\left(0,\frac{\pi}{2}\right)}\times \T_{14},&
P_{\left(0,\frac{\pi}{2}\right)\left\{\frac{\pi}{2}\right\}}\times \T_{24},&
\text{and}&
P_{\left(0,\frac{\pi}{2}\right)\left(0,\frac{\pi}{2}\right)}\times \T.\end{array}\]

\item A transversal for the restriction of the action to $\{\Id,\kappa_{uv}\}$ is given by
$[M_4]$, where
\[M_4=\left(\T_{1,4}\cap\T_{2,3}\right)\times \T.\]
\end{enumerate}
Moreover, for each $1\leq k\leq 4$, the quotient map $\T\times\T\to\left[\T\times\T\right]$ maps $M_k$ bijectively onto $[M_k]$.
\end{Lma}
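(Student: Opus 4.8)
The plan: part~(i) is precisely Lemma~\ref{L: Generalities}(i), so nothing new is needed there. For (ii)--(iv) the group acting on $[\T\times\T]$ is, respectively, the maximal torus $T=\{\kappa_q\mid q\in\mathbb S(\langle 1,u\rangle)\}$ (which fixes $\langle 1,u\rangle$ pointwise and rotates the plane $\langle v,uv\rangle$), its normalizer $N=\{\kappa_q\mid q\in\mathbb S(\langle 1,u\rangle\cup\langle v,uv\rangle)\}\cong O_2$, and the order-two group $Z=\{\Id,\kappa_{uv}\}$, where $\kappa_{uv}=\mathrm{diag}(1,-1,-1,1)$ in the basis $(1,u,v,uv)$. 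In each case I would establish that $[M_k]$ is both exhaustive and irredundant for the relevant $G$-action, and settle the concluding ``moreover'' clause along the way.

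The first move is to lift to $\T\times\T$. Since $\kappa_q(1)=1$ for every $q$, no element of $SO_3$ can act as $-\Id$ on $\HH$, so $-(1,1)$ does not lie in the diagonal copy of $G$; hence the group $\widetilde G$ generated by (the diagonal) $G$ and $-(1,1)$ is the disjoint union of $G$ and its coset $-(1,1)\cdot G$. A subset of $\T\times\T$ containing no pair $(a,b)$ and $(-a,-b)$ maps injectively into $[\T\times\T]$, and its image is a transversal for $G$ on $[\T\times\T]$ exactly when the subset is a fundamental domain for $\widetilde G$ on $\T\times\T$; so the problem reduces to exhibiting such a fundamental domain and recognising each $M_k$ as one.

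To build the fundamental domain I would normalize coordinate by coordinate, writing $a=r_1+r_2u+r_3v+r_4uv$, and similarly for $b$. For $G\in\{T,N\}$ the group preserves the splitting $\HH=\langle 1,u\rangle\oplus\langle v,uv\rangle$; using the rotations in $T$ one rotates the $\langle v,uv\rangle$-component of $a$ onto $\R v$, i.e.\ arranges $r_4=0$, after which the residual part of $\widetilde G$ acting on the first coordinate acts on $(r_1,r_2,r_3)$ through a group of diagonal sign changes — generated by $\mathrm{diag}(1,1,-1)$ from $\kappa_u$, $\mathrm{diag}(-1,-1,-1)$ from $-(1,1)$, and, when $G=N$, also $\mathrm{diag}(1,-1,1)$ from a reflection of $\langle v,uv\rangle$. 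A fundamental domain for this sign group on the $2$-sphere $\mathbb S(\langle 1,u,v\rangle)$ is the half-disc $\{r_4=0,\ r_3\ge 0\}$ (for $T$) or the octant $\{r_4=0,\ r_1,r_2,r_3\ge 0\}$ (for $N$); unwinding the $(\alpha,\beta)$-parametrisation, and replacing along the circle $\mathbb S(\langle 1,u\rangle)$ the half-circle $\{r_3=r_4=0\}$ by the equivalent choice $\{1\}\cup P_0$, this is exactly the first-coordinate locus appearing in $M_2$, respectively in $M_3$. For $G=Z$ one uses instead $\HH=\langle 1,uv\rangle\oplus\langle u,v\rangle$, and a short lexicographic-order check shows that $\T_{14}\cap\T_{23}$ is the corresponding first-coordinate domain. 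Once the first coordinate is fixed, its residual stabiliser in $\widetilde G$ is nontrivial precisely when it lies on one of the finitely many invariant subspaces ($\langle 1\rangle$, $\langle u\rangle$, $\langle v\rangle$, $\langle 1,u\rangle$, $\langle u,v\rangle$, and so on), and on each such stratum the second coordinate $b$ must be confined to a fundamental domain on $\T$ for that residual group; those groups are read off from the ``commute or anticommute'' criterion of the preceding lemma, and the resulting domains are exactly the second-coordinate factors (such as $\{\pm1\}\cup P$, $\T_{12}$, $\T$, $\{\pm1\}\cup P_{(0,\pi)[0,\pi/2]}$, $\T_{14}\cap\T_{24}$, $\T_{14}$, $\T_{24}$) figuring in the pieces of $M_2$ and $M_3$. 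This stratification is precisely why $M_2$ and $M_3$ appear as unions of several product pieces. Exhaustiveness is then immediate, irredundancy follows because each slice chosen at each step meets every orbit of the group used at that step exactly once and distinct strata are disjoint, and the ``moreover'' clause holds because every piece of every $M_k$ is cut out by conditions — $\alpha\in(0,\pi)$, $\beta\in[0,\pi]$, or $(r_m,r_n)\succeq(0,0)$ — that can never contain both a point and its antipode.

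The main obstacle is organisational rather than conceptual: for each of the finitely many invariant subspaces of $\HH$ under $T$, $N$ and $Z$ one must correctly pin down the residual stabiliser of a first coordinate lying on it together with a compatible fundamental domain for that stabiliser on $\T$, and then verify that the union of the resulting product pieces is a genuine fundamental domain with no overlap along the shared boundary strata — all while keeping the circle and $2$-sphere sign-conventions mutually consistent and accounting for every reflection present in the $O_2$-case without double counting. No single verification is hard; the difficulty lies entirely in the case split.
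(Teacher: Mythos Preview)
Your plan is essentially the paper's own argument: normalize the first coordinate, stratify by the invariant subspace it lands in, and on each stratum find a fundamental domain for the residual stabiliser acting on the second coordinate; your lift to $\widetilde G=\langle G,\,-(1,1)\rangle$ on $\T\times\T$ is just a tidy repackaging of the $(-1)^\varepsilon\kappa_q$-bookkeeping the paper carries out directly. The paper also economises in (iii) by first invoking (ii): since $T\le N$, the set $[M_2]$ already exhausts the $N$-orbits, so one only needs the extra reflections coming from $q\in\mathbb S(\langle v,uv\rangle)$ to cut $M_2$ down to $M_3$ and to check irredundancy.

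One concrete slip in your sketch: for $G=T$ the sign group you correctly list as generated by $\mathrm{diag}(1,1,-1)$ and $\mathrm{diag}(-1,-1,-1)$ has order~$4$, and in particular contains $\mathrm{diag}(-1,-1,1)$, which identifies $(r_1,r_2,r_3)$ with $(-r_1,-r_2,r_3)$ even when $r_3>0$. Hence the first-coordinate fundamental domain is a quarter of the $2$-sphere, not the ``half-disc'' $\{r_4=0,\ r_3\ge 0\}$ you claim, and this quarter is what actually matches the first-coordinate locus of $M_2$ (note that $M_2$ omits, e.g., $P_{(\pi/2,\pi)(0,\pi)}$, where $r_1<0$ and $r_3>0$). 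This does not affect your strategy, but the stated domain and the claimed identification with $M_2$ both need this correction before the exhaustiveness/irredundancy check for (ii) goes through.
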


Recall that for each $a,b,c,d\in\T$, $[a,b]$ and $[c,d]$ belong to the same orbit of the restriction of the action to $G\leq
SO_3$ if and only if $(c,d)=((-1)^\varepsilon \kappa_q(a),(-1)^\varepsilon \kappa_q(b))$ for some $\varepsilon\in\Z_2$ and
$q\in\T$ with
$\kappa_q\in G$.

\begin{proof} Statement (i) follows immediately from Lemma \ref{L: Generalities}(i).

For statement (ii) we note that $\{\kappa_q|q\in\mathbb S(\langle 1,u\rangle)\}$ fixes $1$ and $u$ and acts transitively on
$\mathbb S(\langle v,uv\rangle)$. Let now $(a,b)\in\T\times\T$. If $a\in \langle 1,u\rangle$, then $(-1)^\varepsilon a\in\{1\}\cup
P_0$ for some $\varepsilon\in \Z_2$, and there exists $q\in\mathbb S(\langle 1,u\rangle)$ such that $\kappa_q$ maps $b$ into
$\{\pm1\}\cup P$ and fixes $a$. If $a\notin\langle 1,u\rangle$, then there exists $q\in\mathbb S(\langle 1,u\rangle)$ such that
the projection of $\kappa_q(a)$ onto $\langle v,uv\rangle$ is $(-1)^\varepsilon v$, where $\varepsilon$ is determined as follows.
If $a\bot\langle 1,u\rangle$, then $\varepsilon$ is chosen so that $(-1)^\varepsilon b\in\T_{1,2}$. If not, it can be chosen so
that  $(-1)^\varepsilon a\in\T_{1,2}$. In both cases we get $((-1)^\varepsilon \kappa_q(a),(-1)^\varepsilon \kappa_q(b))\in M_2$.
Altogether we have shown that $[M_2]$ exhausts the orbits of the action.

To prove that $[M_2]$ is irredundant, assume that $(a,b)$ and $(c,d)$ in $M_2$ satisfy $(c,d)=((-1)^\varepsilon
\kappa_q(a),(-1)^\varepsilon \kappa_q(b))$ for some $\varepsilon\in\Z_2$ and $q\in\mathbb S(\langle 1,u\rangle)$. Since
conjugation by
$q$
fixes $1$ and $u$ and maps $\langle v,uv\rangle$ to itself, it follows that $(a,b)$ and $(c,d)$ belong to the same one of the
three
constituents\footnote{The \emph{constituents} of each $M_k$ are the subsets of $\T\times\T$ as a union of which $M_k$ is
explicitly constructed in the above lemma. Thus $M_2$ has three constituents, and $M_3$ has seven.} of $M_2$. It moreover follows
that
\begin{equation}\label{E: kappa}
(x,y\in\{\pm1\}\cup P\wedge\kappa_q(x)=y)\Longrightarrow x=y.
\end{equation}
Now if $(a,b)$ and $(c,d)$ lie in $(\{1\}\cup P_0)\times (\{\pm1\}\cup P)$,
then $q$ commutes with $a$, implying that $\kappa_q(a)=a$, and then $\varepsilon=0$ as $-a\notin\{1\}\cup P_0$. Then $a=c$, and
$b,d\in \{\pm1\}\cup P$
with $\kappa_q(b)=d$, which by \eqref{E: kappa} implies that $b=d$. If $(a,b)$ and $(c,d)$ lie in $\{v\}\times \T_{1,2}$, then
$(-1)^\varepsilon \kappa_q(v)=v$, implying that either $\varepsilon=0$ and $\kappa_q=\Id$ or $\varepsilon=1$ and
$\kappa_q=\kappa_u$. If the projection of $b$ onto $\langle 1,u\rangle$ is non-zero, then the first possibility must hold; if not,
then $-\kappa_u(b)=b$. In both cases $(a,b)=(c,d)$. Finally let $(a,b)$ and $(c,d)$ be in the rightmost constituent of $M_2$. Then
each of $a$ and $c$ either has a positive $1$-coordinate, or the $1$-coordinate is zero and the $u$-coordinate is positive. Since
$\kappa_q$ fixes $1$ and $u$ this implies that $\varepsilon=0$, whence $a=c$ by \eqref{E: kappa}. Since $a$ has a non-zero
$v$-coordinate we have $\kappa_q(v)=v$, which, in view of $q\in\langle 1,u\rangle$ implies that $\kappa_q=\Id$. Therefore
$(a,b)=(c,d)$, completing the proof of (ii).

As for statement (iii), observe that the group acting here contains as a subgroup that of item (ii), whence $M_2$ exhausts
the orbits.
The set $M_3$ is a subset of $M_2$, and by computations similar to those in the previous item one sees that for each $(a,b)\in
M_2$, either $(a,b)\in M_3$ or there exists $(c,d)\in M_3$ with $(c,d)=((-1)^\varepsilon \kappa_q(a),(-1)^\varepsilon
\kappa_q(b))$ for some
$\varepsilon\in\Z_2$ and $q\in \mathbb S(\langle v,uv\rangle)$. To prove that $M_3$ is irredundant, assume that
$(c,d)=((-1)^\varepsilon \kappa_q(a),(-1)^\varepsilon \kappa_q(b))$ for some $(a,b)\neq (c,d)\in M_3$, $\varepsilon\in \Z_2$ and
$q\in \mathbb S(\langle 1,u\rangle\cup\langle v,uv\rangle)$. Then $q\in\mathbb S(\langle v,uv\rangle)$ as $M_3$ is a subset of
$M_2$, and $[M_2]$ is irredundant with respect to the restriction of the action to $\{\kappa_q|q\in\mathbb S(\langle
1,u\rangle)\}$, by the previous item. Thus the norm of the projection of $a$ onto each of $\langle 1\rangle$, $\langle u\rangle$
and $\langle v,uv\rangle$ coincides with that of $c$, which upon inspection implies that $(a,b)$ and $(c,d)$ belong
to the same constituent of $M_3$. Checking that no such $(a,b),(c,d)$ exists is then done separately in each constituent, using
computations similar to those of the previous item. 

Statement (iv) is easy to check. Finally the
statement that each $M_k$ is mapped bijectively onto $[M_k]$ follows from the fact that for each $k$,
$M_k$ does not contain the negative of any of its elements.
\end{proof}

Combining the above two lemmata, we immediately arrive at a transversal for the action of $SO_3$ on $\left[\T\times\T\right]^2$ as
follows.

\begin{Cor} Let $M_1,\ldots,M_4$ be as in Lemma \ref{L: Stabilizers}. A transversal for the action of $SO_3$ on
$\left[\T\times\T\right]^2$ is given by the set $N$ of all $([a_1,b_1],[a_2,b_2])\in
[M_1]\times\left[\T\times\T\right]$ satisfying any of the following mutually exclusive conditions.
\begin{enumerate}[(i)]
\item $a_1,b_1\in \langle 1\rangle$ and $(a_2,b_2)\in M_1$.
\item $a_1,b_1\in\langle 1,u\rangle$ and are neither both in $\langle 1\rangle$ nor both in $\langle u\rangle$, and 
$(a_2,b_2)\in M_2$.
\item $a_1,b_1\in \langle u\rangle$ and $(a_2,b_2)\in M_3$.
\item $a_1,b_1\in \langle u,v\rangle$ and are linearly independent, and $(a_2,b_2)\in M_4$.
\item $a_1,b_1$ satisfy none of the above, and $a_2,b_2\in\T$.
\end{enumerate}
\end{Cor}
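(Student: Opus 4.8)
The plan is to combine the two preceding lemmata in the manner already prepared by Lemma~\ref{L: Generalities}. By Lemma~\ref{L: Generalities}(ii)--(iii), it suffices to take $([a_1,b_1],[a_2,b_2])$ with $[a_1,b_1]$ ranging over the transversal $[M_1]$ of the $SO_3$-action on $\left[\T\times\T\right]$, and then, for fixed $[a_1,b_1]$, to choose $[a_2,b_2]$ from a transversal for the restricted action of $\Stab([a_1,b_1])\leq SO_3$ on $\left[\T\times\T\right]$. The preceding lemma identifies the stabilizer $\Stab([a_1,b_1])$ in each of the five mutually exclusive cases distinguished by the position of $a_1$ and $b_1$ (both in $\langle 1\rangle$; both in $\langle 1,u\rangle$ but not covered by the other cases; both in $\langle u\rangle$; both in $\langle u,v\rangle$ and independent; and the generic case), and Lemma~\ref{L: Stabilizers}(i)--(iv) supplies a transversal $[M_1],[M_2],[M_3],[M_4]$ for the restricted action in the first four cases, while in the generic case the stabilizer is trivial so all of $\left[\T\times\T\right]$ is a transversal, i.e.\ $a_2,b_2$ may be arbitrary in $\T$.

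First I would record that the five conditions in the statement are mutually exclusive and exhaustive over $[M_1]$: this is immediate from the classification of $\Stab([a,b])$ in the lemma preceding Lemma~\ref{L: Stabilizers}, since those five cases are exactly the ones listed there. Then, for exhaustiveness of $N$, I would invoke Lemma~\ref{L: Generalities}(ii) to reduce an arbitrary orbit to one meeting $[M_1]\times\left[\T\times\T\right]$, fix a representative $([a_1,b_1],[a_2,b_2])$ with $(a_1,b_1)\in M_1$, read off $\Stab([a_1,b_1])$, and apply the appropriate part of Lemma~\ref{L: Stabilizers} to move $[a_2,b_2]$ into $[M_k]$ without disturbing $[a_1,b_1]$; this lands the orbit in $N$. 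For irredundancy, suppose two elements of $N$ lie in the same $SO_3$-orbit. By Lemma~\ref{L: Generalities}(iii) this forces $[a_1,b_1]=[c_1,d_1]$ (hence $(a_1,b_1)=(c_1,d_1)$, since $M_1$ maps bijectively onto $[M_1]$) and $\kappa_q\cdot[a_2,b_2]=[c_2,d_2]$ for some $\kappa_q\in\Stab([a_1,b_1])$. In particular $(a_1,b_1)$ and $(c_1,d_1)$ fall under the same one of the five cases, so $(a_2,b_2)$ and $(c_2,d_2)$ lie in the same $M_k$; then irredundancy of $[M_k]$ for the restricted action (Lemma~\ref{L: Stabilizers}) gives $[a_2,b_2]=[c_2,d_2]$, as desired.

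The main obstacle is essentially bookkeeping rather than a genuine difficulty: one must be careful that in the reduction step of Lemma~\ref{L: Generalities}(ii), after conjugating to put $[a_1,b_1]$ into $[M_1]$, the residual freedom really is exactly $\Stab([a_1,b_1])$, and that the transversals of Lemma~\ref{L: Stabilizers} are stated for precisely these subgroups $\{\kappa_q\mid q\in\mathbb S(\langle 1,u\rangle)\}$, $\{\kappa_q\mid q\in\mathbb S(\langle 1,u\rangle\cup\langle v,uv\rangle)\}$ and $\{\Id,\kappa_{uv}\}$ — which is the case. Given that, the corollary is just the formal composition of the two lemmata, and the proof can be stated in a few lines.

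\begin{proof} The five conditions are mutually exclusive and jointly cover all $([a_1,b_1],[a_2,b_2])\in[M_1]\times\left[\T\times\T\right]$, since by the lemma preceding Lemma~\ref{L: Stabilizers} the position of the pair $(a_1,b_1)\in M_1$ falls under exactly one of the five cases there, and $\Stab([a_1,b_1])$ is correspondingly $SO_3$, $\{\kappa_q\mid q\in\mathbb S(\langle 1,u\rangle)\}$, $\{\kappa_q\mid q\in\mathbb S(\langle 1,u\rangle\cup\langle v,uv\rangle)\}$, $\{\Id,\kappa_{uv}\}$, or trivial.

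To see that $N$ exhausts the orbits, let $([a_1,b_1],[a_2,b_2])\in\left[\T\times\T\right]^2$ be arbitrary. By Lemma~\ref{L: Generalities}(ii) its orbit meets $[M_1]\times\left[\T\times\T\right]$, so we may assume $(a_1,b_1)\in M_1$. Let $G=\Stab([a_1,b_1])$. In case (i), $G=SO_3$, and Lemma~\ref{L: Stabilizers}(i) provides $\kappa_q\in G$ with $\kappa_q\cdot[a_2,b_2]\in[M_1]$; in cases (ii)--(iv) the corresponding parts of Lemma~\ref{L: Stabilizers} provide $\kappa_q\in G$ with $\kappa_q\cdot[a_2,b_2]\in[M_2],[M_3],[M_4]$, respectively; in case (v), $G$ is trivial and there is nothing to do. Since $\kappa_q$ fixes $[a_1,b_1]$, the resulting element lies in $N$.

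To prove irredundancy, let $([a_1,b_1],[a_2,b_2])$ and $([c_1,d_1],[c_2,d_2])$ in $N$ lie in the same $SO_3$-orbit. By Lemma~\ref{L: Generalities}(iii), $[a_1,b_1]=[c_1,d_1]$ and $\kappa_q\cdot[a_2,b_2]=[c_2,d_2]$ for some $q\in\T$ with $\kappa_q\in\Stab([a_1,b_1])$. Since the quotient map is injective on $M_1$ by Lemma~\ref{L: Generalities}(i), $(a_1,b_1)=(c_1,d_1)$, so both pairs fall under the same one of the five cases, and hence $(a_2,b_2)$ and $(c_2,d_2)$ lie in the same set $M_k$ (with $M_k=\left[\T\times\T\right]$-representatives unrestricted in case (v)). In cases (i)--(iv), $[a_2,b_2]$ and $[c_2,d_2]$ are then in the same orbit of the restriction of the action to $\Stab([a_1,b_1])$, so irredundancy of $[M_k]$ for that restricted action (Lemma~\ref{L: Stabilizers}) gives $[a_2,b_2]=[c_2,d_2]$; in case (v) the stabilizer is trivial, so again $[a_2,b_2]=[c_2,d_2]$. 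Thus the two elements of $N$ coincide.
\end{proof}
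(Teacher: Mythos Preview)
Your proof is correct and follows exactly the approach the paper intends: the paper states the corollary as an immediate consequence of combining the stabilizer lemma with Lemma~\ref{L: Stabilizers} via Lemma~\ref{L: Generalities}, and in fact gives no proof beyond the sentence ``Combining the above two lemmata, we immediately arrive at a transversal\ldots''. Your write-up simply spells out the exhaustiveness and irredundancy arguments that the paper leaves implicit.
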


\begin{Rk} The four elements $([1,\pm1],[1,\pm1])$ satisfy the first item above. The set
$N'=N\setminus\{([1,\pm1],[1,\pm1])\}$ is thus a transversal for the action of $SO_3$ on the set $S$ from Remark \ref{R:
Remark}. By virtue of Theorem \ref{T: Quasi2} and using the functors $\mathcal K_*^{ij}$ defined therein, a classification of
the category $\mathcal H_0^{ij}$ is given by $\mathcal K_*^{ij}(N')$ for each $(i,j)\in\Z_2^2$.
\end{Rk}

This concludes the classification of $\mathcal D$.

\bibliographystyle{amsplain}
\bibliography{references}
\end{document}